\newcommand*{\Relbarfill@}{\arrowfill@\Relbar\Relbar\Relbar}
\newcommand*{\xeq}[2][]{\ext@arrow 0055\Relbarfill@{#1}{#2}}
\LetLtxMacro{\oldsqrt}{\sqrt}
\renewcommand{\sqrt}[2][]{\,\oldsqrt[#1]{#2}\,}
\def\@tocline#1#2#3#4#5#6#7{\relax
  \ifnum #1>\c@tocdepth 
  \else
    \par \addpenalty\@secpenalty\addvspace{#2}%
    \begingroup \hyphenpenalty\@M
    \@ifempty{#4}{%
      \@tempdima\csname r@tocindent\number#1\endcsname\relax
    }{%
      \@tempdima#4\relax
    }%
    \parindent\z@ \leftskip#3\relax \advance\leftskip\@tempdima\relax
    \rightskip\@pnumwidth plus4em \parfillskip-\@pnumwidth
    #5\leavevmode\hskip-\@tempdima
      \ifcase #1
       \or\or \hskip 1em \or \hskip 2em \else \hskip 3em \fi%
      #6\nobreak\relax
    \dotfill\hbox to\@pnumwidth{\@tocpagenum{#7}}\par
    \nobreak
    \endgroup
  \fi}
\def\greekbolds#1{%
 \@for\next:=#1\do{%
    \def\X##1;{%
     \expandafter\def\csname V##1\endcsname{\boldsymbol{\csname##1\endcsname}}
     }
   \expandafter\X\next;
  }
}
\def\make@bb#1{\expandafter\def
  \csname bb#1\endcsname{{\mathbb{#1}}}\ignorespaces}
\def\make@bbm#1{\expandafter\def
  \csname bb#1\endcsname{{\mathbbm{#1}}}\ignorespaces}
\def\make@bf#1{\expandafter\def\csname bf#1\endcsname{{\bf
      #1}}\ignorespaces} 
\def\make@gr#1{\expandafter\def
  \csname gr#1\endcsname{{\mathfrak{#1}}}\ignorespaces}
\def\make@scr#1{\expandafter\def
  \csname scr#1\endcsname{{\mathscr{#1}}}\ignorespaces}
\def\make@cal#1{\expandafter\def\csname cal#1\endcsname{{\mathcal
      #1}}\ignorespaces} 
\def\do@Letters#1{#1A #1B #1C #1D #1E #1F #1G #1H #1I #1J #1K #1L #1M
                 #1N #1O #1P #1Q #1R #1S #1T #1U #1V #1W #1X #1Y #1Z}
\def\do@letters#1{#1a #1b #1c #1d #1e #1f #1g #1h #1i #1j #1k #1l #1m
                 #1n #1o #1p #1q #1r #1s #1t #1u #1v #1w #1x #1y #1z}
\newcommand{\abs}[1]{\lvert #1 \rvert}
\newcommand{\zmod}[1]{\mathbb{Z}/ #1 \mathbb{Z}}
\newcommand{\fl}[1]{\left\lfloor #1 \right\rfloor}
\newcommand{\dangle}[1]{\left\langle #1 \right\rangle}
\newcommand{\ur}{{\mathrm{ur}}}
\newcommand{\wh}{\widehat}
\newcommand{\wt}{\widetilde}
\DeclareMathSymbol{\twoheadrightarrow} {\mathrel}{AMSa}{"10}
\DeclareMathOperator{\fchar}{char}
\DeclareMathOperator{\Ram}{Ram}
\DeclareMathOperator{\Nr}{Nr}
\DeclareMathOperator{\Tp}{Tp}
\DeclareMathOperator{\SG}{SG}
\DeclareMathOperator{\Emb}{Emb}
\newcommand{\sg}{\mathrm{sg}}
\newcommand{\whA}{\widehat{A}}
\newcommand{\whF}{\widehat{F}}
\newcommand{\wcO}{\widehat{\mathcal{O}}}
\newcommand{\wbZ}{\widehat{\mathbb{Z}}}
\newcommand{\bsh}{\backslash}
\DeclareMathOperator{\Hom}{Hom}
\DeclareMathOperator{\Gal}{Gal}
\DeclareMathOperator{\Mat}{Mat}
\DeclareMathOperator{\Tr}{Tr}
\DeclareMathOperator{\Nm}{N}  
\DeclareMathOperator{\GL}{GL}
\DeclareMathOperator{\SL}{SL}
\newcommand{\Z}{\mathbb Z}
\newcommand{\Q}{\mathbb Q}
\newcommand{\F}{\mathbb F}
\newcounter{thmcounter} 
\numberwithin{thmcounter}{section}  
\newtheorem{thm}[thmcounter]{Theorem}
\newtheorem{lem}[thmcounter]{Lemma}
\newtheorem{cor}[thmcounter]{Corollary}
\newtheorem{prop}[thmcounter]{Proposition}
\theoremstyle{definition}
\newtheorem{defn}[thmcounter]{Definition}
\newtheorem{ex}[thmcounter]{Example}
\newtheorem{rem}[thmcounter]{Remark}
\newtheorem{que}[thmcounter]{Question}
\numberwithin{equation}{section}
\numberwithin{figure}{section}
\numberwithin{table}{section}
\newtheoremstyle{notitle}  
  {}
  {}
  {\itshape}
  {}
  {}
  {\ }
  {.5em}
  {}
\theoremstyle{notitle}
 \title[Optimal Spinor Selectivity]{Optimal spinor selectivity for quaternion
   Bass orders}
 \author{Deke Peng}
\address{(Peng) School of
  Mathematics and Statistics, Wuhan University, Luojiashan, 430072,
  Wuhan, Hubei, P.R. China}   
\email{dkpeng@whu.edu.cn}
 \author{Jiangwei Xue}
\address{(Xue) Collaborative Innovation Center of Mathematics, School of
  Mathematics and Statistics, Wuhan University, Luojiashan, 430072,
  Wuhan, Hubei, P.R. China}   
\address{(Xue) Hubei Key Laboratory of Computational Science (Wuhan
  University), Wuhan, Hubei,  430072, P.R. China.} 
\thanks{J.~Xue is partially supported
by the National Natural Science Foundation of China grant \#11601395. 
}
\email{xue\_j@whu.edu.cn}
\begin{document}
\date{\today} 
 \subjclass[2020]{11R52, 11S45} 
 \keywords{Quaternion algebra, Bass order, optimal embeddings, selectivity.}

\begin{abstract}
  Let $A$ be a quaternion algebra over a number field $F$, and
  $\mathcal{O}$ be an $O_F$-order of full rank in $A$.  Let $K$ be a
  quadratic field extension of $F$ that embeds into $A$, and $B$ be an
  $O_F$-order in $K$. Suppose that $\mathcal{O}$ is a Bass order that
  is well-behaved at all the dyadic primes of $F$.  We provide a
  necessary and sufficient condition for $B$ to be optimally spinor
  selective for the genus of $\mathcal{O}$. This partially generalizes previous results
  on optimal (spinor) selectivity by C.~Maclachlan [Optimal embeddings
  in quaternion algebras. J. Number Theory, 128(10):2852–-2860, 2008]
  for Eichler orders of square-free levels,  and independently by
  M.~Arenas et~al.~[On optimal embeddings and trees. J. Number Theory,
  193:91--117, 2018] and by J.~Voight [Chapter 31, Quaternion
  algebras, volume 288 of Graduate Texts in
  Mathematics. Springer-Verlag, 2021] for Eichler orders of arbitrary
  levels. 
\end{abstract}

\maketitle



\section{Introduction}


Let $F$ be a number field, and $O_F$ be its ring of integers. Let  $A$
be a quaternion $F$-algebra. Two orders\footnote{All orders
  considered in this paper are $O_F$-orders of \emph{full rank} in their ambient
  $F$-algebra upon introduction. For example, both $\calO$ and
  $\calO'$ are of full rank in $A$.} $\calO$ and
  $\calO'$ in $A$ are said to be in the same \emph{genus} if their
  $\grp$-adic completions $\calO_\grp$ and $\calO'_\grp$ are isomorphic at
  every finite prime $\grp$ of $F$.  For example, given a fixed nonzero integral
ideal $\grn\subseteq O_F$ coprime to the reduced discriminant
$\grd(A)$ of $A$, all Eichler orders (i.e.~intersections of two maximal orders)
of level $\grn$ in $A$ form a single genus. In particular, all maximal
orders (which are simply Eichler orders of level $\grn=O_F$)  form a single
genus. Fix a genus $\scrG$ of orders in $A$.
Let $K/F$ be a quadratic field extension that embeds into $A$, and $B$ be an order  in
$K$.  The \emph{selectivity question} for $\scrG$ and $B$ can be stated as
follows:
\begin{que}\label{que:selectivity}
Whether and when can $B$ embed into every order in $\scrG$?
\end{que}
If $B$  embeds into some but not all members of $\scrG$, then
we say that $B$ is \emph{selective} for $\scrG$. In such a case, one
further asks how to determine the members of $\scrG$ that do admit an
embedding of $B$.

The selectivity question has several variants. Recall that an
\emph{optimal embedding} of $B$ into  $\calO$  is an
embedding $\varphi: K\to A$ such that
$\varphi(K)\cap \calO=\varphi(B)$.  If we substitute the word
``embed'' by ``optimally embed'' in
Question~\ref{que:selectivity}, then we get the \emph{optimal
  selectivity question}.  Generally, the (optimal) selectivity
question admits a satisfactory answer only if $A$ satisfies the
Eichler condition, that is, $A$ is split at an infinite place of $F$. Indeed, almost all literature
\cite{Chinburg-Friedman-1999,Guo-Qin-embedding-Eichler-JNT2004,Chan-Xu-Rep-Spinor-genera-2004,M.Arenas-et.al-opt-embed-trees-JNT2018,Arenas-Carmona-2013,Arenas-Carmona-Max-Sel-JNT-2012,Linowitz-Selectivity-JNT2012,voight-quat-book,Maclachlan-selectivity-JNT2008}
on (optimal) selectivity
assumes the Eichler condition.

If the Eichler condition fails, then $F$ is necessarily a totally real
field, and $A$ is ramified at all the infinite places of $F$. Such
quaternion algebras are called \emph{totally definite}.  For arbitrary
quaternion algebras including the totally definite ones, we can formulate a general notion called
\emph{optimal spinor selectivity} as in Definition~\ref{defn:OSS}. If
$A$ satisfies the Eichler condition, then ``optimal spinor
selectivity'' reduces to ``optimal selectivity'' by
Remark~\ref{rem:spinor-genus=type}. It was observed in
\cite{xue-yu:spinor-class-no} that understanding optimal spinor
selectivity plays a crucial role in computing certain class numbers
attached to orders in totally definite quaternion algebras. For this
reason, we focus on optimal spinor selectivity in this paper. 


The study on selectivity questions was initiated by Chevalley
\cite{Chevalley-matrices-1936}. Modern research on this topic
is heavily influenced by the work of Chinburg and Friedman
\cite{Chinburg-Friedman-1999}, which gives a complete answer to
Question~\ref{que:selectivity} for the genus of maximal orders in $A$.
Indeed, it was them who coined the term ``selectivity''.
Independently, Guo and Qin \cite{Guo-Qin-embedding-Eichler-JNT2004}
and Chan and Xu \cite{Chan-Xu-Rep-Spinor-genera-2004} generalized the
result to Eichler orders.  Arenas-Carmona \cite{Arenas-Carmona-2013}
and Linowitz \cite{Linowitz-Selectivity-JNT2012} obtained selectivity
theorems for more general classes of orders.  The solution to the selectivity question has important 
applications in the construction of isospectral non-isometric
hyperbolic manifolds; see \cite[\S12.4--5]{hyperbolic-3-mfld} and
\cite{Linowitz-Voight-isospectral}. 
More broadly, selectivity
results in the context of $A$ being a  central simple $F$-algebra have been
obtained by Linowitz and Shemanske \cite{Linowitz-Shemanske-2012} and
Arenas-Carmona \cite{Arenas-Carmona-Spinor-CField-2003,
  Arenas-Carmona-cyclic-orders-2012,
  Arenas-Carmona-Max-Sel-JNT-2012}.



As for optimal selectivity, Maclachlan 
\cite{Maclachlan-selectivity-JNT2008} first obtained a theorem for Eichler orders of
square-free levels. Independently, Arenas et~al.\
\cite{M.Arenas-et.al-opt-embed-trees-JNT2018} and Voight
\cite[Chapter~31]{voight-quat-book} removed the square-free condition
and obtained theorems for Eichler orders of
arbitrary levels. Their results have been generalized by Chia-Fu Yu and the
second named author to quaternion
orders with nonzero Eichler invariants at all finite primes of $F$ in
\cite[\S2.2]{xue-yu:spinor-class-no}. See
Definition~\ref{defn:eichler-invariant} for the notion of the Eichler
invariant of a quaternion order at a prime $\grp$ of $F$. Since quaternion orders with nonzero Eichler invariants
everywhere are Bass by \cite[Corollary~2.4 and
Proposition~3.1]{Brzezinski-1983}, the next natural step is  to
consider arbitrary quaternion Bass  orders. In this
paper, we study optimal spinor selectivity under the assumption that
the genus 
$\scrG$ consists of Bass orders  well-behaved (to be made
precise in (\ref{eq:4})) at the dyadic primes.  The main result will be stated in
Theorem~\ref{thm:main}. 
  See \cite[\S37]{curtis-reiner:1} for the general
theory of Bass orders in finite dimensional $F$-algebras.

This paper is organized as follows. We introduce the preliminary
notions and state our main theorem in \S\ref{sec:main-theorem}.  The
definition and basic properties of quaternion Bass orders will be
recalled in \S\ref{sec:bass-orders}. The proof of the main theorem
will be carried out in three steps in \S\ref{sec:proof-main}.  We
construct an interesting family of examples in \S\ref{sec:an-exam}.


\textbf{Notation.} Throughout this paper, $F$ is either a number
field or a nonarchimedean local field of characteristic not equal to
$2$.  We will always fix  a
quaternion $F$-algebra $A$, and write $\calO$ for an $O_F$-order in $A$. When $F$ is a number field, we write $\Ram(A)$ for the
finite set of places of $F$ that are ramified  in $A$, and
$\Ram_\infty(A)$ (resp.~$\Ram_f(A)$) for the set of the infinite (resp.~finite) ramified
places. If $\grp$ is a finite prime of $F$ and $M$ is a finite
dimensional $F$-vector space or a finite
$O_F$-module, we write $M_\grp$ for the $\grp$-adic completion of
$M$. In particular, $F_\grp$ is the $\grp$-adic completion of $F$,
whose $\grp$-adic discrete valuation is denoted by
$\nu_\grp: F_\grp^\times\twoheadrightarrow \Z$.  Let
$\wbZ=\varprojlim \zmod{n}=\prod_p \Z_p$ be the profinite completion
of $\Z$. If $X$ is a finitely generated $\Z$-module or a finite
dimensional $\Q$-vector space, we set $\wh X=X\otimes_\Z\wbZ$. For
example, $\whA$ is the ring of finite adeles of $A$, and
$\wcO=\prod_{\grp}\calO_\grp$.

\section{Basic notions and the main theorem}
\label{sec:main-theorem}

In this section, we introduce some preliminary notions and state our main
result. 

By definition, two orders $\calO$ and $\calO'$ in $A$ are said to be 
in the same \emph{genus} if they are locally isomorphic
everywhere, or equivalently, if there exists $x\in \whA^\times$ such
that   $\wcO'= x \wcO x^{-1}$.  The orders $\calO$ and $\calO'$  are
said to be of the same \emph{type} if they are isomorphic, or
equivalently, if there exists $\alpha\in A^\times$ such that $\calO'= \alpha\calO
\alpha^{-1}$. Let
$[\calO]:=\{\alpha \calO \alpha^{-1}\mid \alpha\in A^\times\}$
be the type of $\calO$, and $\Tp(\calO)$ be the finite set of types of
orders in the genus of $\calO$. We regard $\Tp(\calO)$ as a
pointed set with
the base point $[\calO]$.  If $\scrG:=\scrG(\calO)$ denotes the genus of
$\calO$, then we  write $\Tp(\scrG)$ for the type set  $\Tp(\calO)$
with the base point omitted. 
The quaternion algebra
$A$ admits a canonical involution $\alpha\mapsto \bar \alpha$ such
that $\Tr(\alpha)=\alpha+\bar \alpha$ and
$\Nr(\alpha)=\alpha \bar \alpha$ are respectively the \emph{reduced trace}
and \emph{reduced norm} of $\alpha\in A$. Given a set $X\subseteq
\whA$, we write $X^1$ for the subset of elements with reduced norm
$1$, that is,  
\begin{equation}
  \label{eq:116}
X^1:=\{x\in X\mid \Nr(x)=1\}. 
\end{equation}

\begin{defn}[{\cite[\S1]{Brzezinski-Spinor-Class-gp-1983}}]\label{defn:spinor-genus-class}
Two orders $\calO$ and $\calO'$ in $A$ are said to be in the same
  \emph{spinor genus} if there
  exists  $x\in A^\times\whA^1$ such that
  $\wcO'= x \wcO x^{-1}$.  
\end{defn}
We write $\calO\sim \calO'$ if  $\calO$ and $\calO'$ are in the same
  spinor genus.   The spinor genus of
  $\calO$ is denoted by $[\calO]_\sg$. For the  genus $\scrG=\scrG(\calO)$, the set of spinor genera within $\scrG$ is denoted by
  $\SG(\scrG)$. We
  often  write $\SG(\calO)$ for 
  the pointed set $\SG(\scrG)$ with the base point
  $[\calO]_\sg$.  By definition, there is a canonical projection of pointed
  sets
  \begin{equation}\label{eq:1}
    \Tp(\calO)\twoheadrightarrow \SG(\calO), \qquad [\calO']\mapsto [\calO']_\sg.
  \end{equation}
  \begin{rem}\label{rem:spinor-genus=type}
When $A$ satisfies the
Eichler condition, Brzezinski
\cite[Proposition~1.1]{Brzezinski-Spinor-Class-gp-1983} shows that the
above map is a bijection, that is, 
each spinor genus of orders consists of exactly one type. 
  \end{rem}

Let $K/F$ be a quadratic field extension.  We assume that $K$ is
$F$-embeddable into $A$ throughout this section. In light of the Hasse-Brauer-Noether-Albert Theorem
\cite[Theorem~32.11]{reiner:mo} \cite[Theorem~III.3.8]{vigneras}, this
assumption says 
 that no place of $F$ which is 
ramified in $A$ splits in $K$.   
Given orders $B\subset K$ and $\calO\subset A$, we write $\Emb(B, \calO)$ for the set of
optimal embeddings of $B$ into $\calO$, that is
\begin{equation}
  \label{eq:22}
  \Emb(B, \calO):=\{\varphi\in \Hom_F(K, A)\mid \varphi(K)\cap \calO=\varphi(B)\}.
\end{equation}
The unit group $\calO^\times$ acts from the right on $\Emb(B, \calO)$
by  conjugation: $\varphi\mapsto u^{-1}\varphi u$ for any $u\in
\calO^\times$.  Thanks to the Jordan-Zassenhaus Theorem
\cite[Theorem~24.1, p.~534]{curtis-reiner:1}, the number of orbits
\begin{equation}\label{eq:14}
m(B, \calO, \calO^\times):=\abs{\Emb(B, \calO)/\calO^\times},   
\end{equation}
is always finite (which holds true in the local case as well). 
According to
\cite[Corollary~30.4.8]{voight-quat-book}, there exists 
$\calO'\in \scrG$ such that $\Emb(B, \calO')\neq \emptyset$ if and
only if $\Emb(B_\grp, \calO_\grp)\neq\emptyset$ for every finite prime
$\grp$ of $F$. The latter condition depends only on the
genus $\scrG$ and not on the choice of $\calO$. We define 
\begin{equation}
  \label{eq:79}
  \Delta(B, \calO)=
  \begin{cases}
    1 \qquad &\text{if } \exists\, \calO'\text{ such that $\calO'\sim \calO$
      and }\Emb(B, \calO')\neq \emptyset, \\
    0 \qquad &\text{otherwise}.
  \end{cases}
\end{equation}
Clearly, $\Delta(B,\calO)=0$ if there exists a finite prime $\grp$ of
$F$  such that $\Emb(B_\grp, \calO_\grp)=\emptyset$.  The symbol $\Delta(B, \calO)$ is featured prominently in class number formulas studied in \cite[Corollary~3.4 and
Theorem~3.8]{xue-yu:spinor-class-no}.

\begin{defn}\label{defn:OSS}
  We say $B$ is \emph{optimally spinor selective}  for  $\scrG$  if
  $\{\calO\in \scrG\mid \Delta(B, \calO)=1\}$ is a nonempty proper
  subset of $\scrG$, in which case a 
  spinor genus $[\calO]_\sg\subseteq \scrG$  with $\Delta(B, \calO)=1$
  is said to be \emph{optimally selected}
  by $B$. 
\end{defn}

From Remark~\ref{rem:spinor-genus=type}, if $A$ satisfies the Eichler
condition, then each spinor genus consists of exactly one type of orders,
and hence in this case there is no difference between ``optimal spinor
selectivity''  here and ``optimal
selectivity''  in
\cite{Maclachlan-selectivity-JNT2008,M.Arenas-et.al-opt-embed-trees-JNT2018,voight-quat-book}. 

To state our main theorem, we introduce some invariants of orders.  Given 
a finite prime $\grp$ of $F$, we write 
$\nu_\grp: F^\times \twoheadrightarrow \Z$ for the associated
normalized $\grp$-adic discrete valuation. Let $\grd(\calO)$ be the
reduced discriminant of $\calO$, and $\grf(B)$ be the conductor of
$B$, i.e.~the unique nonzero integral ideal of $F$ such that
$B=O_F+\grf(B)O_K$. Put
\begin{equation}
  \label{eq:3}
 n_\grp(\calO):=\nu_\grp(\grd(\calO)), \quad\text{and}\quad i_\grp(B):=
 \nu_\grp(\grf(B)). 
\end{equation}
  Note that  $n_\grp(\calO)=0$ if and only if
$\calO_\grp\simeq \Mat_2(O_{F_\grp})$. Similarly, $n_\grp(\calO)=1$ if
and only if one of the following is true:
\begin{itemize}
\item $A$ is split at $\grp$,  and $\calO_\grp$ is an Eichler order
  of level $\grp O_{F_\grp}$;
  \item $A$ is ramified at $\grp$, and $\calO_\grp$ is the
unique maximal order of $A_\grp$. 
\end{itemize}


\begin{defn}[{\cite[Definition~1.8]{Brzezinski-1983}}]\label{defn:eichler-invariant}
  Let   $\grk_\grp:= O_F/\grp$
be the finite residue field of $\grp$,  and $\grk_\grp'/\grk_\grp$ be the unique
quadratic field extension.  When $\calO_\grp\not\simeq
\Mat_2(O_{F_\grp})$, the quotient of $\calO_\grp$ by its Jacobson radical
$\grJ(\calO_\grp)$ falls into the following three cases: 
\[\calO_\grp/\grJ(\calO_\grp)\simeq \grk_\grp\times \grk_\grp, \qquad \grk_\grp,
\quad\text{or}\quad \grk_\grp', \]
and the \emph{Eichler invariant} $e_\grp(\calO)$ of $\calO$ at $\grp$ is defined to be
$1, 0, -1$ accordingly.  As a convention, if $\calO_\grp\simeq
\Mat_2(O_{F_\grp})$, then its Eichler invariant is defined to be
$2$.

Similarly, let $(K/\grp)$ be the
Artin symbol of $K$ at $\grp$, which takes value $1, 0, -1$ according to whether $\grp$
is split, ramified or inert in the extension $K/F$.
\end{defn}
 For
example, if $A$  is ramified at $\grp$ and $\calO_\grp$ is maximal, then
$e_\grp(\calO)=-1$. It is shown in
\cite[Proposition~2.1]{Brzezinski-1983} that 
$e_\grp(\calO)=1$ if and only if
$\calO_\grp$ is a non-maximal Eichler order (particularly,  $A$ is split
at $\grp$).  From \cite[Corollary~4.3]{Brzezinski-1983}, if $e_\grp(\calO)=0$, then
$n_\grp(\calO)\geq 2$ (see also the discussion above Definition~\ref{defn:eichler-invariant}). 


 Central to the theory of spinor
optimal selectivity is the
 class field $\Sigma_\scrG/F$ associated to the genus $\scrG$ and the map $(\calO, \calO')\mapsto
 \rho(\calO, \calO')\in \Gal(\Sigma_\scrG/F)$ on pair of orders
 $\calO, \calO'\in \scrG$. These two notions have been stable for
 almost all  variants of selectivity
 theory, cf.~\cite[\S3]{Linowitz-Selectivity-JNT2012} and
 \cite[\S31.1]{voight-quat-book}.  Following \cite[\S
 III.4]{vigneras}, we write $F_A^\times$ for the subgroup of
 $F^\times$ consisting of all elements 
that are positive at each place in  $\Ram_\infty(A)$. The Hasse-Schilling-Maass theorem \cite[Theorem~33.15]{reiner:mo}
\cite[Theorem~III.4.1]{vigneras} implies that 
\begin{equation}
  \label{eq:15}
  \Nr(A^\times)=F_A^\times.  
\end{equation}
Let $\calN(\wcO)$ be the normalizer of
 $\wcO$ in $\whA^\times$. The pointed 
 set $\SG(\calO)$ of spinor genera in $\scrG$ admits the following adelic description
(cf.~\cite[Propositions~1.2 and 1.8]{Brzezinski-Spinor-Class-gp-1983})
\begin{equation}
  \label{eq:118}
\SG(\calO)\simeq  (A^\times\whA^1)\bsh \whA^\times/\calN(\wcO)\xrightarrow[\simeq]{\Nr}
  F_A^\times\bsh \whF^\times/\Nr(\calN(\wcO)),  
\end{equation}
where the  two double coset spaces are canonically bijective via
the reduced norm map. It follows that $\SG(\calO)$ is naturally
equipped with an abelian group structure, with its distinguished point
$[\calO]_\sg$ as the
identity element.  Since $\Nr(\calN(\wcO))$ is an open subgroup of
$\whF^\times$ containing $(\whF^\times)^2$, the group $\SG(\calO)$ is a
finite elementary $2$-group
\cite[Proposition~3.5]{Linowitz-Selectivity-JNT2012}. Clearly, the
group $\Nr(\calN(\wcO))$ depends only on the genus $\scrG$ and not on
the choice of $\calO$. 

\begin{defn}[{\cite[\S2]{Arenas-Carmona-Spinor-CField-2003},
    \cite[\S3]{Linowitz-Selectivity-JNT2012}}] \label{defn:spinor-field}
  The \emph{spinor genus field} of $\scrG$ is the abelian field extension
  $\Sigma_\scrG/F$ corresponding to the open subgroup
  $F_A^\times\Nr(\calN(\wcO))\subseteq \whF^\times$ via the class
  field theory \cite[Theorem~X.5]{Lang-ANT}. 
\end{defn}
By the definition of $\Sigma_\scrG$, there are isomorphisms:
\begin{equation}
  \label{eq:8}
\SG(\calO)\simeq  F_A^\times\bsh \whF^\times/
  \Nr(\calN(\wcO))\simeq   \Gal(\Sigma_\scrG/F). 
\end{equation}
Given another order $\calO'\in \scrG$, we define $\rho(\calO, \calO')$
to be the image of $[\calO']_\sg\in \SG(\calO)$ in
$\Gal(\Sigma_\scrG/F)$ under the above isomorphism.  More canonically, we regard the base
point free set $\SG(\scrG)$ as a principal homogeneous space over
$\Gal(\Sigma_\scrG/F)$ via (\ref{eq:8}). Then $\rho(\calO, \calO')$ is
the unique element of $\Gal(\Sigma_\scrG/F)$ that sends $[\calO]_\sg$
to $[\calO']_\sg$. Clearly, $\rho(\calO, \calO')$ enjoys the following
properties:
\begin{enumerate}[label=(\alph*)]
\item $\rho(\calO, \calO')=1$ if and only if $\calO\sim \calO'$;
\item $\rho(\calO, \calO')=\rho(\calO', \calO)$;
\item   $\rho(\calO, \calO'')=\rho(\calO, \calO')\rho(\calO', \calO'')$.
\end{enumerate}

We will postpone the definition and basic properties of Bass orders to the
next section. Taking that for granted,  we are now ready to state the main theorem.
\begin{thm}\label{thm:main}
   \begin{enumerate}[label=(\Roman*)]
   \item   Let $\scrG$ be a genus of Bass orders in $A$, and $\calO$ be a
  member of $\scrG$.  Assume that $\calO$
  is well-behaved at every dyadic prime $\grq$ of $F$ in the following sense:
  \begin{equation}
    \label{eq:4}
   n_\grq(\calO)=2 \text{ if } e_\grq(\calO)=0, \quad \forall \grq|(2O_F).    
  \end{equation}
  Let  $K/F$ be a
  quadratic field extension that embeds into $A$, and
  $B$ be an order in $K$.
 Suppose that $\Emb(B_\grp, \calO_\grp)\neq \emptyset$ for every
 finite prime $\grp$
   so that $B$ is optimally embeddable into some member of
  $\scrG$. 
Then $B$ is optimally spinor selective for $\scrG$ if and only if
$K\subseteq \Sigma_\scrG$ and for every nondyadic prime $\grp$ with
$e_\grp(\calO)=0$ and $(K/\grp)=0$, one of the following 
conditions holds


  \begin{enumerate}[label=(\roman*)]
   \item  $n_\grp(\calO)\geq 2i_\grp(B)+3$; 
   \item $n_\grp(\calO)=2i_\grp(B)+1$, $A$ is split at $\grp$, and $\abs{\grk_\grp}=5$;
   \item   $n_\grp(\calO)=2i_\grp(B)+1$, $A$ is ramified at $\grp$, and $\abs{\grk_\grp}=3$. 
     \end{enumerate}
\item If $B$ is optimally spinor selective for $\scrG$, then both of
  the following hold true: 
  \begin{enumerate}
\item for any two $O_F$-orders
  $\calO, \calO'\in \scrG$,  
   \begin{equation}
\label{eq:164}
    \Delta(B, \calO')=\rho(\calO', \calO)\vert_K+\Delta(B, \calO), 
  \end{equation}
  where $\rho(\calO',
  \calO)\vert_K\in \Gal(K/F)$ denotes the restriction of $\rho(\calO',
  \calO)\in \Gal(\Sigma_\scrG/F)$ to $K$, and the
  summation is taken inside $\zmod{2}$ with the canonical
  identification $\Gal(K/F)\simeq \zmod{2}$;

  \item exactly half of the spinor genera in  $\scrG$ are
    optimally selected
    by $B$. 
  \end{enumerate}
   \end{enumerate}
\end{thm}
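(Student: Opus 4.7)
My plan is to reduce the selectivity problem to a group-theoretic question via the class field theoretic description (\ref{eq:8}). Fix the base order $\calO\in \scrG$, and for each finite prime $\grp$ set
\[
W_\grp := \Nr\!\left(\left\{ x\in A_\grp^\times : x\calO_\grp x^{-1}\text{ admits an optimal embedding of }B_\grp\right\}\right)\subseteq F_\grp^\times,
\]
and $W:=\prod_\grp W_\grp\subseteq \whF^\times$. Since $\Emb(B_\grp,\calO_\grp)\neq\emptyset$ for all $\grp$ by hypothesis, each $W_\grp$ contains $\Nr(\calN(\calO_\grp))$, so $W$ is a union of cosets of $\Nr(\calN(\wcO))$. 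Combining strong approximation for $\whA^1$ with the Hasse--Schilling--Maass equality (\ref{eq:15}), one checks that for $\calO'\sim \calO$ one has $\Delta(B,\calO')=1$ exactly when an adele representing $[\calO']_\sg$ lies in $F_A^\times W$. Hence optimal spinor selectivity amounts to $F_A^\times W$ being a proper, nonempty subset of $\whF^\times$, and the optimally selected spinor genera form a coset of $F_A^\times W/F_A^\times\Nr(\calN(\wcO))$ inside $\SG(\scrG)$.

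\textbf{Local computation of $W_\grp$.} The technical heart of the proof is the prime-by-prime determination of $W_\grp$ in terms of $n_\grp(\calO)$, $e_\grp(\calO)$, $i_\grp(B)$, and the Artin symbol $(K/\grp)$. Using the explicit classification of quaternion Bass orders to be recalled in \S\ref{sec:bass-orders}, I would treat each local type separately: Eichler orders (where the answer reduces to \cite{M.Arenas-et.al-opt-embed-trees-JNT2018, voight-quat-book}), hereditary/residually inert orders with $e_\grp(\calO)=-1$ (as handled in \cite{xue-yu:spinor-class-no}), and the genuinely new residually split cases with $e_\grp(\calO)=0$. In every case I expect a clean dichotomy: either $W_\grp=\Nr(\calN(\calO_\grp))$ (the prime is locally ``inert'' for selectivity), or $W_\grp$ is an index-$2$ subgroup of $\Nr(\calN(\calO_\grp))$ whose associated quadratic character on $F_\grp^\times$ is, by local class field theory, the one cutting out $K_\grp/F_\grp$. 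A case check will show that the second alternative occurs precisely at the non-dyadic primes satisfying $e_\grp(\calO)=0$, $(K/\grp)=0$ and one of (i)--(iii); the dyadic hypothesis (\ref{eq:4}) is imposed exactly to exclude an anomalous contribution at residually ramified dyadic primes with $e_\grq(\calO)=0$.

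\textbf{Global assembly.} Once the $W_\grp$ are known, $F_A^\times W / F_A^\times\Nr(\calN(\wcO))$ is, by local--global compatibility of the reciprocity map, either all of $\Gal(\Sigma_\scrG/F)$ or an index-$2$ subgroup. If $K\not\subseteq \Sigma_\scrG$, the product formula for the quadratic character attached to $K/F$ forces the nontrivial local contributions from the selective primes to cancel globally, so $F_A^\times W=\whF^\times$ and no selectivity occurs. If $K\subseteq \Sigma_\scrG$, the product of local characters descends to the nontrivial character on $\Gal(\Sigma_\scrG/F)$ with fixed field $\Sigma_\scrG^{\Gal(\Sigma_\scrG/K)}=K$, and $F_A^\times W/F_A^\times \Nr(\calN(\wcO))$ is precisely the kernel of the restriction $\Gal(\Sigma_\scrG/F)\twoheadrightarrow \Gal(K/F)$, which is proper iff at least one prime with nontrivial local character exists. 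Collecting the resulting necessary and sufficient conditions yields Part (I).

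\textbf{Part (II) and the main difficulty.} For Part (II)(a), under selectivity $\Delta(B,\cdot)$ is the pullback to $\SG(\scrG)\simeq \Gal(\Sigma_\scrG/F)$ of either the trivial or the nontrivial coset class in $\Gal(\Sigma_\scrG/F)/\ker(\mathrm{res})\simeq \Gal(K/F)\simeq \zmod 2$; since $\rho(\calO',\calO)$ is by definition the element sending $[\calO]_\sg$ to $[\calO']_\sg$, one reads off $\Delta(B,\calO')-\Delta(B,\calO)=\rho(\calO',\calO)|_K$ in $\zmod 2$. Part (II)(b) is then immediate from $[\Gal(\Sigma_\scrG/F):\ker(\mathrm{res})]=2$. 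The main obstacle in the program is the local analysis at primes with $e_\grp(\calO)\in\{-1,0\}$: the normalizer $\calN(\calO_\grp)$ and the space of optimal embeddings of $B_\grp$ are substantially more intricate than in the Eichler case, and precisely locating the numerical thresholds comparing $n_\grp(\calO)$ to $2i_\grp(B)$, together with the small-residue-field exceptions $\abs{\grk_\grp}\in\{3,5\}$ appearing in (ii) and (iii), demands a careful orbit analysis under $\calO_\grp^\times$-conjugation on embeddings.
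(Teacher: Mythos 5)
Your high-level architecture — describe selectivity adelically via the reduced norm, reduce to a product of local contributions $W_\grp$, and reassemble with class field theory — is consistent in spirit with the paper's approach, which invokes a theorem from \cite{xue-yu:spinor-class-no} to accomplish exactly this reduction. However, there are two genuine problems.

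First, a sign error in the dichotomy. Since $W_\grp = \Nr(\calE_\grp)$ always contains $\Nr(\calN(\calO_\grp))$ (as you yourself note), it cannot be a \emph{subgroup} of $\Nr(\calN(\calO_\grp))$ of index $2$: that would be a strictly smaller group. The actual alternatives are $\Nr(\calE_\grp)=F_\grp^\times$ versus $\Nr(\calE_\grp)=\Nm_{K_\grp/F_\grp}(K_\grp^\times)$, and the latter (where $\Nr(\calE_\grp)$ does \emph{not} enlarge $\Nr(\calN(\calO_\grp))$) is the branch that imposes a selectivity constraint — the opposite of what you wrote. Moreover this branch can only occur when $\Nr(\calN(\calO_\grp))$ already equals $\Nm_{K_\grp/F_\grp}(K_\grp^\times)$, i.e.~only at primes where $\Sigma_\scrG$ is nonsplit; using the explicit computation of $\Nr(\calN(\calO_\grp))$ for local Bass orders one shows this forces $\grp$ into the small set of nondyadic primes with $e_\grp(\calO)=0$, $n_\grp(\calO)\geq 3$, and $(K/\grp)=0$. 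Your proposal does not explain this reduction to a finite exceptional set.

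Second and more seriously, the ``case check'' that supposedly produces the thresholds (i)--(iii) is the entire mathematical content of Part~(I), and you have given no argument for it. The paper's proof of this step occupies all of Step~(II) and Step~(III): one invokes Brzezinski's recursion lemma relating $\Emb(B,\calO)$ to $\Emb(\calM(B),\calM^2(\calO))$ to descend, via the chain of minimal overorders, to a bounded number of base cases distinguished by the relation between $n(\calO)$ and $2\,i(B)$; then for each base case one writes $\calO=\dangle{1,x_{\alpha\beta},\pi^r x_1,\pi^s x_3}$ explicitly and produces concrete conjugating elements $z$ with $\Nr(z)\in\Nm_{K/F}(K^\times)$ and $zKz^{-1}\cap\calO=zBz^{-1}$ (or proves no such $z$ exists, via an orbit count formula), which is where the exceptional residue field sizes $\abs{\grk_\grp}\in\{3,5\}$ arise. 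Without this explicit construction, the assertion that the second alternative occurs ``precisely'' under conditions (i)--(iii) is unsubstantiated — there is no a priori reason why the answer should depend on $n_\grp(\calO)-2i_\grp(B)$ through thresholds at $1$ and $3$, or why $\abs{\grk_\grp}=5$ (split) and $\abs{\grk_\grp}=3$ (ramified) should be special. As it stands, your proposal states the desired conclusion of the local analysis without proving it.
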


\begin{rem}\label{rem:special-cases}
  Suppose that $\calO$ is an order in $A$ satisfying 
  \begin{equation}\label{eq:5}
 e_\grp(\calO)\neq 0 \text{ for every finite prime } \grp \text{ of } F.     
  \end{equation}
Then $\calO$ is automatically
Bass by \cite[Corollary~2.4 and
Proposition~3.1]{Brzezinski-1983}. Moreover, the conditions 
  (\ref{eq:4}) and 
(i--iii) above are all vacuous in this case.  Therefore, if $\calO$ satisfies
condition (\ref{eq:5}), then
$B$ is optimally spinor selective for the
genus $\scrG$ if and only if $K\subseteq \Sigma_\scrG$. Since 
all Eichler orders satisfy  (\ref{eq:5}), 
we recover partial cases of
\cite[Theorem~1.1]{M.Arenas-et.al-opt-embed-trees-JNT2018} and
\cite[Theorem~31.1.7]{voight-quat-book}. On the other hand, if
condition (\ref{eq:5}) is dropped, we can easily construct examples
where $K\subseteq \Sigma_\scrG$, but $\Delta(B,\calO)=1$ for every
$\calO\in \scrG$. See \S\ref{sec:an-exam} for a
family of 
examples. 
\end{rem}


Given a Bass order  $\calO\subset A$ and an order
$B\subset K$,  in order to apply Theorem~\ref{thm:main}, a priori, we need to
know whether $\Emb(B_\grp, \calO_\grp)=\emptyset$ (equivalently, $m(B_\grp, \calO_\grp, \calO_\grp^\times)=0$)  or not for every
$\grp$.    If $e_\grp(\calO)\in \{1, 2\}$, then $A$ is split at $\grp$
 and $\calO_\grp$ is an Eichler order. In this case, the method for computing $m(B_\grp,
 \calO_\grp, \calO_\grp^\times)$ is well known and has historically  been studied
  by Eichler,
 Hijikata and many others.  See \cite[\S II.3]{vigneras} and
 \cite[\S30.6]{voight-quat-book} for some expositions.  If
 $e_\grp(\calO)\in \{-1, 0\}$, Brzezinski
 \cite{Brzezinski-crelle-1990} produced recursive formulas for
$m(B_\grp, \calO_\grp, \calO_\grp^\times)$. For example, if
$e_\grp(\calO)=0$ and $n_\grp(\calO)=2$, then $m(B_\grp, \calO_\grp,
\calO_\grp^\times)$ can be read off directly from  \cite[(3.14) and
(3.17)]{Brzezinski-crelle-1990}. See also
Corollary~\ref{cor:opt-embed-nonempty} for an application of
Brzezinski's result in the case that $\grp$ is nondyadic,
$e_\grp(\calO)=0$, and $n_\grp(\calO)\geq 3$.

From \cite[Lemma~2.8]{xue-yu:spinor-class-no}, the condition $K\subseteq \Sigma_\scrG$ can be
characterized purely in terms of local conditions. For the reader's
convenience, we recall this lemma below.  Keep in mind that $K$ is
assumed to be $F$-embeddable into $A$. 
\begin{lem}\label{lem:K-in-Sigma}
We have  $K\subseteq \Sigma_\scrG$  if and only if both of the
following conditions hold: 
\begin{enumerate}
\item[(i)] $K$ and $A$ are ramified at
exactly the same (possibly empty) set of real places of $F$;
\item[(ii)]  $\Nr(\calN(\calO_\grp))\subseteq \Nr(K_\grp^\times)$ 
 for every finite prime $\grp$ of $F$.
\end{enumerate}
\end{lem}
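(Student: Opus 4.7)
The plan is to rewrite $K \subseteq \Sigma_\scrG$ via class field theory as a statement about local Artin symbols, and then inspect the condition place by place.

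First, I would upgrade the adelic description (\ref{eq:118}) of $\SG(\calO)$---currently phrased on the finite-idele side---to a description of $\Sigma_\scrG$ as a class field inside the full idele class group $\A_F^\times/F^\times$. Set
\[
U_\infty := \prod_{v \in \Ram_\infty(A)} \mathbb{R}_{>0} \;\times\; \prod_{v \notin \Ram_\infty(A)} F_v^\times, \qquad H := U_\infty \cdot \Nr(\calN(\wcO)) \subseteq \A_F^\times.
\]
Since $F_A^\times = F^\times \cap U_\infty$ and weak approximation at the archimedean places gives a surjection $F^\times \twoheadrightarrow F_\infty^\times/U_\infty$, one obtains $\A_F^\times/HF^\times \simeq F_A^\times \bsh \whF^\times/\Nr(\calN(\wcO)) \simeq \Gal(\Sigma_\scrG/F)$, so $\Sigma_\scrG$ is the class field cut out by $HF^\times$. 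Consequently $K \subseteq \Sigma_\scrG$ is equivalent to $H$ lying in the kernel of the global Artin map $\psi_K : \A_F^\times \twoheadrightarrow \Gal(K/F)$.

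Next, because $H$ decomposes as a product $\prod_v H_v$ and contains the adeles supported at a single place, killing $H$ under $\psi_K$ amounts to killing each $H_v$ under the local Artin symbol $\psi_{K,v}$. I would then run through each place: at complex $v$ nothing happens; at real $v \in \Ram_\infty(A)$, $H_v = \mathbb{R}_{>0}$ is automatically a local norm from $K_v$; at real $v \notin \Ram_\infty(A)$, $H_v = \mathbb{R}^\times$ maps to $1$ iff $-1$ is a local norm from $K_v$, iff $v$ splits in $K$. These infinite conditions collapse to $\Ram_\infty(K/F) \subseteq \Ram_\infty(A)$, and the reverse containment is free from the embeddability hypothesis (a real place ramified in $A$ cannot split in $K$, and since it is real it must therefore ramify in $K$), yielding condition (i). At a finite prime $\grp$, the local Artin kernel is exactly $\Nr(K_\grp^\times)$ (vacuously equal to $F_\grp^\times$ when $\grp$ splits in $K$), so the condition becomes $\Nr(\calN(\calO_\grp)) \subseteq \Nr(K_\grp^\times)$ for every finite $\grp$, which is condition (ii).

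The main technical hurdle is the first step: correctly identifying $\Sigma_\scrG$ as the class field of the right open subgroup of $\A_F^\times$, and in particular translating the quotient by $F_A^\times$ in (\ref{eq:118}) into the archimedean factor $U_\infty$ of $H$. Once this bookkeeping is in place, the rest is a routine application of local class field theory together with the standard catalogue of norm subgroups $N_{K_v/F_v}(K_v^\times)$ at archimedean places.
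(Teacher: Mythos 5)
The paper does not actually supply a proof of this lemma: it is cited verbatim from \cite[Lemma~2.8]{xue-yu:spinor-class-no} ``for the reader's convenience.'' So there is no in-paper argument to compare against line by line. Evaluating your proposal on its own terms, it is correct and is the natural class-field-theoretic argument one would expect the cited reference to give.

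Your bookkeeping in the first step is exactly right: with $U_\infty$ as you define it, an element $(1,x_f)$ of $\A_F^\times$ lies in $HF^\times$ precisely when $x_f\in F_A^\times\Nr(\calN(\wcO))$, so $\A_F^\times/HF^\times\simeq F_A^\times\bsh\whF^\times/\Nr(\calN(\wcO))$, and $\Sigma_\scrG$ is the class field cut out by the open subgroup $HF^\times$. Then $K\subseteq\Sigma_\scrG$ iff $H\subseteq\ker\psi_K$, and since $H$ is a restricted product $\prod_v H_v$ containing the ideles supported at a single place, this is equivalent to $H_v\subseteq N_{K_v/F_v}(K_v^\times)$ for every $v$. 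Your place-by-place analysis is then correct: at complex $v$ there is nothing; at a real $v\in\Ram_\infty(A)$, $H_v=\R_{>0}\subseteq N_{K_v/F_v}(K_v^\times)$ automatically; at a real $v\notin\Ram_\infty(A)$, $H_v=\R^\times$ lies in the local norm group iff $v$ does not ramify in $K$; at a finite $\grp$, one gets $\Nr(\calN(\calO_\grp))\subseteq\Nr(K_\grp^\times)$. You also correctly observe that the containment $\Ram_\infty(A)\subseteq\Ram_\infty(K/F)$ is forced by the standing embeddability hypothesis (via Hasse--Brauer--Noether--Albert, a real place ramified in $A$ cannot split in $K$, hence must ramify), so the archimedean conditions package into condition (i). No gaps.

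One purely expository remark: when you say ``$H_v=\R^\times$ maps to $1$ iff $-1$ is a local norm from $K_v$, iff $v$ splits in $K$,'' it would be cleaner to phrase the final equivalence as ``iff $v$ does not ramify in $K$,'' since for a real place of a quadratic extension ``not ramified'' and ``split'' coincide but the former matches the statement of (i) directly.
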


Thanks to  the explicit description of
normalizers of local Bass orders \cite[Theorems~2.2 and
2.5]{Brzezinski-crelle-1990}, we have the following characterization
of $\Sigma_\scrG$. 
\begin{prop}\label{prop:description-Sigma}
Let $\scrG$ and $\calO$ be as in Theorem~\ref{thm:main}.    Then $\Sigma_\scrG/F$ is
the maximal abelian extension of exponent $2$ satisfying  all of the
following conditions:
  \begin{enumerate}
  \item $\Sigma_\scrG$ is unramified at each of the following places:
    \begin{enumerate}[label=(\theenumi\alph*)]
    \item an infinite place of $F$ that is split in $A$, 
    \item a finite prime $\grp$ with $e_\grp(\calO)=2$, i.e.~$\calO_\grp\simeq
      \Mat_2(O_{F_\grp})$,
    \item a finite prime $\grp$ with $e_\grp(\calO)= 1$ and
      $n_\grp(\calO)\equiv 0\pmod{2}$,
    \item   a finite prime $\grp$ with $e_\grp(\calO)= -1$ and
     $A_\grp\simeq \Mat_2(F_\grp)$;
    \end{enumerate}
\item  $\Sigma_\scrG$ splits completely at each of the following
  finite prime $\grp$ of $F$: 
    \begin{enumerate}[label=(\theenumi\alph*)]
\item   $e_\grp(\calO)=-1$ and $A$ is ramified at $\grp$,
    \item $e_\grp(\calO)=1$ and
  $n_\grp(\calO)\equiv 1\pmod{2}$,
    \item  $e_\grp(\calO)=0$ and $n_\grp(\calO)=2$,
    \item  $e_\grp(\calO)=0$, $n_\grp(\calO)\geq 3$, $A$ is split at
      $\grp$, and $-1\not\in \grk_\grp^{\times 2}$,
          \item  $e_\grp(\calO)=0$, $n_\grp(\calO)\geq 3$, $A$ is ramified at
      $\grp$, and $-1\in \grk_\grp^{\times 2}$;
    \end{enumerate}
  \item if $\grp$ is a finite nondyadic
    prime of $F$ with $e_\grp(\calO)=0$, $n_\grp(\calO)\geq 3$, and 
    \begin{itemize}
    \item either $A$ is split at
      $\grp$ with $-1\in \grk_\grp^{\times 2}$,
      \item or $A$ is ramified  at
      $\grp$ with $-1\not\in \grk_\grp^{\times 2}$,  
    \end{itemize}
    then either $\grp$ splits completely in $\Sigma_\scrG$,  or $\Sigma_\scrG\otimes_F F_\grp$ is a direct
    sum of copies of a quadratic extension
    $M_\grp/F_\grp$ whose ring of integers  embeds into $\calO_\grp$.  
  \end{enumerate}
\end{prop}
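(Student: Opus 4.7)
The plan is to translate the proposition into a local computation via global class field theory. By Definition~\ref{defn:spinor-field}, $\Sigma_\scrG$ corresponds to the open subgroup $U := F_A^\times \Nr(\calN(\wcO)) \subseteq \whF^\times$, and since $(\whF^\times)^2 \subseteq \Nr(\calN(\wcO))$ the extension $\Sigma_\scrG/F$ automatically has exponent dividing $2$. The decomposition of $\Sigma_\scrG$ at a place $v$ of $F$ is determined purely by the local component $U_v$: at a finite prime $\grp$, the completion $\Sigma_\scrG \otimes_F F_\grp$ is unramified iff $O_{F_\grp}^\times \subseteq \Nr(\calN(\calO_\grp))$ and splits completely iff $\Nr(\calN(\calO_\grp)) = F_\grp^\times$; otherwise it is a direct sum of copies of the quadratic extension $M_\grp/F_\grp$ whose norm subgroup is $\Nr(\calN(\calO_\grp))$. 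So the proposition reduces to describing $\Nr(\calN(\calO_\grp))$ for every $\grp$, together with the observation that the infinite places are controlled entirely by $F_A^\times$ and therefore give condition (1a).

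For the finite primes I would run through the classification of local quaternion Bass orders by their Eichler invariant $e_\grp(\calO) \in \{2,1,-1,0\}$, invoking the explicit normalizer description in \cite[Theorems~2.2 and 2.5]{Brzezinski-crelle-1990}. The cases $e_\grp \in \{2,1,-1\}$ are essentially direct: a split maximal order has normalizer $F_\grp^\times \GL_2(O_{F_\grp})$, giving $\Nr(\calN) = O_{F_\grp}^\times \cdot F_\grp^{\times 2}$ and hence (1b); a non-maximal Eichler order of level $\grp^{n_\grp}$ has normalizer generated by $\calO_\grp^\times$ together with an Atkin--Lehner element of reduced norm $\pi^{n_\grp}$, yielding (1c) when $n_\grp$ is even and (2b) when $n_\grp$ is odd; and $e_\grp = -1$ splits according to whether $A_\grp$ is a division algebra (maximal order, normalizer $A_\grp^\times$, case (2a)) or $A_\grp$ is split (a ``residue-field extension'' order whose normalizer contributes only $O_{F_\grp}^\times \cdot F_\grp^{\times 2}$, case (1d)).

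The substance lies in $e_\grp(\calO) = 0$. Here $n_\grp \geq 2$ by \cite[Corollary~4.3]{Brzezinski-1983}, and the well-behavedness hypothesis (\ref{eq:4}) forces every dyadic prime into $n_\grp = 2$; thus any prime with $n_\grp \geq 3$ is automatically nondyadic, as required by condition (3). For $n_\grp = 2$, Brzezinski's description yields $\Nr(\calN(\calO_\grp)) = F_\grp^\times$ and hence (2c). For $n_\grp \geq 3$, Bass orders of this type have a canonical form $\calO_\grp = O_{L_\grp} + \grJ(\calO_\grp)$ for a distinguished quadratic étale extension $L_\grp/F_\grp$ whose ramification type is determined by the parity of $n_\grp$ and by whether $A_\grp$ is split or ramified; the normalizer then has reduced norm $\Nr(L_\grp^\times)\cdot F_\grp^{\times 2}$. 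A short residue-field computation shows that this equals $F_\grp^\times$ exactly in the situations (2d) and (2e), where the condition on $-1 \in \grk_\grp^{\times 2}$ is precisely the extra symmetry that makes the norm from $L_\grp$ surjective onto $F_\grp^\times / F_\grp^{\times 2}$; in the complementary situations the norm group has index $2$ and cuts out a genuine quadratic extension $M_\grp/F_\grp$ whose ring of integers embeds into $\calO_\grp$, producing case (3).

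The main obstacle is precisely the bookkeeping in this last case: correctly identifying the quadratic extension $L_\grp$ attached to each Bass isomorphism class of $\calO_\grp$, and verifying that the criterion $-1 \in \grk_\grp^{\times 2}$ is exactly what swaps the alternatives ``split completely'' and ``ramified through $M_\grp$,'' with the correct sensitivity to whether $A_\grp$ is split or ramified. This interplay between the local Bass structure, Hilbert-symbol calculations, and the Hasse norm group of $L_\grp$ is where one must lean most heavily on the explicit tables of \cite{Brzezinski-crelle-1990}.
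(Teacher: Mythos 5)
Your overall strategy---reduce via class field theory to computing $\Nr(\calN(\calO_\grp))$ locally, then case-split on the Eichler invariant using Brzezinski---is the same as the paper's, and the cases $e_\grp(\calO)\in\{2,1,-1\}$ and $e_\grp(\calO)=0$, $n_\grp(\calO)=2$ are handled essentially as the paper does.

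However, there is a genuine error in the crucial $e_\grp(\calO)=0$, $n_\grp(\calO)\geq 3$ case. You claim the normalizer has reduced norm $\Nr(L_\grp^\times)\cdot F_\grp^{\times 2}$ and that a residue-field computation shows this to be all of $F_\grp^\times$ precisely in situations (2d) and (2e). But when $e_\grp(\calO)=0$ the distinguished quadratic extension $L_\grp=M_\grp$ is \emph{always} a ramified field extension of $F_\grp$ (by \cite[Proposition~1.12]{Brzezinski-crelle-1990} and Lemma~\ref{lem:unique-ram-ext}; it is not governed by the parity of $n_\grp$ or by whether $A_\grp$ is split). Consequently $\Nr(L_\grp^\times)=\Nm_{M_\grp/F_\grp}(M_\grp^\times)$ has index exactly $2$ in $F_\grp^\times$, already contains $F_\grp^{\times 2}$, and so $\Nr(L_\grp^\times)\cdot F_\grp^{\times 2}=\Nm_{M_\grp/F_\grp}(M_\grp^\times)$ can never equal $F_\grp^\times$. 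Your formula therefore cannot produce the ``splits completely'' alternatives (2d), (2e) at all, and the claim that ``the norm from $L_\grp$ becomes surjective onto $F_\grp^\times/F_\grp^{\times 2}$'' is impossible for a ramified field extension.

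What actually decides the matter is the additional normalizing element $\sigma_0$ appearing in Brzezinski's explicit decomposition $\calN^0(\calO_\grp)=F_\grp^\times\calM(\calO_\grp)^\times\sqcup F_\grp^\times\calM(\calO_\grp)^\times\sigma_0$ of (\ref{eq:39}), which is \emph{not} accounted for by $M_\grp^\times$ and $\calO_\grp^\times$ alone. The paper's (\ref{eq:46}) sandwiches $\Nr(\calN(\calO_\grp))$ between $\Nm_{M_\grp/F_\grp}(M_\grp^\times)$ and $F_\grp^\times$, so the question becomes whether $\Nr(\sigma_0)\in O_{F_\grp}^{\times 2}$; Lemma~\ref{lem:normalizer-red-norm} computes $\Nr(\sigma_0)$ from (\ref{eq:41})--(\ref{eq:45}) and shows it is a square if and only if the stated $-1\in\grk_\grp^{\times2}$ condition holds (depending on whether $A_\grp$ is split), giving the dichotomy between (2d)/(2e) and (3). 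This contribution of $\sigma_0$ is the missing ingredient in your proposal.
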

Such a quadratic extension $M_\grp/F_\grp$ is necessarily ramified by
\cite[Proposition~1.12]{Brzezinski-crelle-1990}, and it is uniquely
determined by $\calO_\grp$ according to Lemma~\ref{lem:unique-ram-ext}. From  
Proposition~\ref{prop:description-Sigma}(2c), if $\grq$ is a dyadic prime with
$e_\grq(\calO)=0$ (hence $n_\grp(\calO)=2$ by assumption (\ref{eq:4})), then $\Sigma_\scrG/F$ splits completely at
$\grq$.  Note that if there exists a finite prime $\grp$ such that $A$
is ramified at $\grp$ and $\Sigma_\scrG$ splits completely at $\grp$,
then $B$ is \emph{not} optimally spinor selective for $\scrG$. Indeed, since $K_\grp$ embeds into
$A_\grp$ by our assumption, we must have $K\not\subseteq
\Sigma_\scrG$.  Proposition~\ref{prop:description-Sigma} will be proved at the end of
\S\ref{sec:bass-orders}.

Part (II) of Theorem~\ref{thm:main} follows directly from
\cite[Theorem~2.11]{xue-yu:spinor-class-no}. To prove the first part, we reduce it to local considerations as well. See
\S\ref{subsec:step-i} for  details.  It is clear from the above
discussion that we rely heavily on the fundamental work of
Brzezinski \cite{Brzezinski-crelle-1990}. In theory, it is possible to
apply his result to  remove assumption (\ref{eq:4}) and to obtain an
optimal spinor selectivity theorem for all quaternion Bass orders. However, our
method is built upon explicit computations, which becomes too
complicated at the dyadic primes. We leave such an endeavor to a
more adventurous reader.

\section{Quaternion Bass orders}  
\label{sec:bass-orders}
In this section, we recall the definition and basic properties of quaternion Bass
orders. Our main references are the work by Brzezinski
\cite{Brzezinski-crelle-1990,Brzezinski-1983} and by Chari et
al.~\cite{Voight-basic-orders}. We keep the notation of previous
sections, except that $F$ is allowed to be either a number field or a nonarchimedean local field of
characteristic not equal to two.

In the local case, a
quadratic extension of $F$  means a
quadratic semisimple $F$-algebra, that is, either $F\times F$ or a quadratic field
extension of $F$. We
denote  the unique maximal ideal
of $O_F$ by $\grp$, and  its residue field by $\grk$.  We drop the
subscript $_\grp$ and write $\nu, n(\calO)$, $i(B)$, $e(\calO)$ for 
$\nu_\grp, n_\grp(\calO)$, $i_\grp(B)$, $e_\grp(\calO)$,  respectively. See
(\ref{eq:3}) and Definition~\ref{defn:eichler-invariant}. 

Given an order $\calO$ in the
quaternion $F$-algebra $A$, an \emph{overorder}  of $\calO$ is an order $\calO'$
in $A$  containing $\calO$. An overorder $\calO'\supsetneq \calO$ 
 is called a \emph{minimal overorder} of $\calO$ if it is minimal with respective to inclusion
 among the orders  \emph{properly} containing $\calO$.


\begin{defn}
  An order $\calO$ in $A$ is \emph{Gorenstein} if its dual lattice
  $\calO^\vee:=\{x\in A\mid \Tr(x\calO)\subseteq O_F\}$ is 
  projective as a left (or right) $\calO$-module. It is
  called a \emph{Bass order} if every orverorder of $\calO$ (including
  $\calO$ itself)  is Gorenstein. 
\end{defn}

As noted by Bass \cite{Bass-MathZ-1963} himself, Gorenstein orders are
ubiquitous.  Being Gorenstein is a local property (when $F$ is a number
field), that is, $\calO$ is
Gorenstein if and only if  $\calO_\grp$ is Gorenstein for every finite
prime $\grp$ of $F$. Consequently, being 
Bass is  a local property as well. Bass orders enjoy many equivalent
characterizations.  We merely mention one of them that is mostly
relevant to our current quest. 
\begin{thm}
  An order $\calO\subset A$ is Bass if and only if it is \emph{basic}, i.e.~there exists a semisimple quadratic $F$-algebra $L$ whose ring of
  integers $O_L$  embeds into $\calO$. 
\end{thm}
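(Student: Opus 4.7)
The plan is to reduce immediately to the local case. The Gorenstein condition, and hence the Bass condition, is local as already noted in the excerpt, and the basic condition is easily seen to be local as well: ``basic implies locally basic'' is immediate, while ``locally basic implies basic'' follows by a standard patching argument using weak approximation at the finitely many primes at which the local quadratic choices can disagree. So I may assume $F$ is nonarchimedean with residue field $\grk$, and $\calO$ is a quaternion order in $A$.

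For the direction \emph{basic $\Rightarrow$ Bass}, assume $O_L \hookrightarrow \calO$ for some quadratic semisimple $F$-algebra $L$. Any overorder $\calO' \supseteq \calO$ then also contains $O_L$, so every overorder of $\calO$ is basic, and the problem reduces to showing that a basic quaternion order is Gorenstein. To this end I would pick an element $\gamma \in A^\times$ normalizing $L$ and acting on it by the nontrivial $F$-automorphism, and rescale so that $\calO = O_L \oplus O_L \gamma$ as a free right $O_L$-module. A direct computation of $\calO^\vee = \{x \in A \mid \Tr(x\calO) \subseteq O_F\}$ under the reduced trace pairing then shows that $\calO^\vee$ is principal as a left (equivalently, right) $\calO$-module, generated by an element built from the inverse different of $O_L/O_F$; this is exactly the Gorenstein property.

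For the converse \emph{Bass $\Rightarrow$ basic}, I would stratify by the Eichler invariant $e(\calO) \in \{2, 1, 0, -1\}$ and invoke Brzezinski's local classification \cite{Brzezinski-1983, Brzezinski-crelle-1990}. When $e(\calO) = 2$, $\calO \simeq \Mat_2(O_F)$ contains $O_F \times O_F$ diagonally. When $e(\calO) = 1$, $\calO$ is a non-maximal Eichler order and again admits $O_F \times O_F$ via an idempotent decomposition. When $e(\calO) = -1$, the quotient $\calO/\grJ(\calO) \simeq \grk'$ is a separable quadratic extension of $\grk$, and Hensel's lemma lifts a primitive generator to an embedding of the ring of integers of the unramified quadratic extension of $F$ into $\calO$. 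The substantive case, and the main obstacle, is $e(\calO) = 0$: here not every Gorenstein order is basic, and one must use Brzezinski's structural description of $e = 0$ Bass orders as chains obtained from a hereditary base order by iterated radical idealizer steps, from which one can extract an explicit embedding of the ring of integers of a suitable ramified quadratic extension. Assembling these four cases produces the desired $L$ with $O_L \hookrightarrow \calO$.
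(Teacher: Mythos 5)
The paper does not actually prove this theorem: it states it and defers to Brzezinski \cite[Proposition~1.11]{Brzezinski-crelle-1990} for the local case and to Chari--Smertnig--Voight \cite[Theorem~1.2]{Voight-basic-orders} for the number-field case. So there is no in-text proof to compare against; the relevant comparison is with the proofs in those two references, and your sketch does broadly mirror them. That said, two of your steps contain genuine gaps.

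First, the reduction to the local case. You say ``locally basic implies basic follows by a standard patching argument using weak approximation.'' But this implication \emph{is} the theorem of \cite{Voight-basic-orders}, and it is not routine. You must produce a single global \'etale quadratic $F$-algebra $L$ such that $O_{L_\grp}$ embeds into $\calO_\grp$ at \emph{every} finite prime simultaneously, while also ensuring that $L$ is not split at any place in $\Ram(A)$ and that $O_L$ is genuinely the maximal order (so $L$ must be unramified, or otherwise compatible with $\calO_\grp$, at the infinitely many primes where $\calO_\grp$ is maximal). At the finitely many bad primes the local quadratic algebra is often forced --- for instance, by Lemma~\ref{lem:unique-ram-ext} of this paper, when $e_\grp(\calO)=0$ and $n_\grp(\calO)\geq 3$ the ramified quadratic extension admitting an embedding is \emph{unique} --- so you are solving a Grunwald--Wang-type problem with rigid local targets. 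This is solvable (the Grunwald--Wang obstruction is absent in degree $2$), but it is exactly the content that \cite{Voight-basic-orders} supplies and should not be waved away as ``standard patching.''

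Second, in the direction \emph{basic} $\Rightarrow$ \emph{Gorenstein}, you ``rescale so that $\calO = O_L \oplus O_L\gamma$ as a free right $O_L$-module'' with $\gamma$ normalizing $L$ and acting by the nontrivial automorphism. What is automatic (locally) is that $\calO$ is free of rank $2$ over $O_L$ and that $O_L$ is a direct summand, so $\calO = O_L \oplus e\,O_L$ for some $e\in\calO$. But writing $e = \ell_0 + \gamma$ with $\ell_0\in L$ and $\gamma\in L\gamma_0$ (the trace-orthogonal complement of $L$ in $A$), the ``$L$-component'' $\ell_0$ need not lie in $O_L$: the constraint coming from $\Tr(x O_L)\subseteq O_F$ only puts $\ell_0$ in the inverse different $\grd_{L/F}^{-1}$, which is strictly larger than $O_L$ when $L/F$ is ramified. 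Hence you cannot in general subtract $\ell_0$ to bring $e$ into $L\gamma_0$, and the clean formula you invoke for $\calO^\vee$ does not follow. The conclusion (basic implies Gorenstein) is of course true, but it needs a more careful computation of the codifferent --- or, as in \cite{Voight-basic-orders}, an argument via the associated ternary quadratic form and its primitivity --- rather than the decomposition you assert.

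Your treatment of the converse (\emph{Bass} $\Rightarrow$ \emph{basic}, stratified by the Eichler invariant) is the right outline and matches Brzezinski's approach; the $e(\calO)=0$ case is indeed where the work is, and his classification \cite[Proposition~1.12]{Brzezinski-crelle-1990} together with the explicit description of such orders (as in (\ref{eq:18}) of this paper, which visibly contains $O_{F(x_{\alpha\beta})}=\dangle{1,x_{\alpha\beta}}$) does the job.
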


This theorem is proved  by
Brzezinski \cite[Proposition~1.11]{Brzezinski-crelle-1990} in the local
case, and by Chari et al. \cite[Theorem~1.2]{Voight-basic-orders} in
the number field case. See \cite[Corollary~1.3]{Voight-basic-orders}
for more characterizations of quaternion Bass orders.

For the rest of this section, we assume that $F$ is local unless
specified otherwise.  Recall
that an order $\calO$ is \emph{hereditary} if 
 every  $\calO$-lattice in a free $A$-module is
$\calO$-projective \cite[p.~76]{curtis-reiner:1}. 
From \cite[Proposition~1.2]{Brzezinski-1983},  $\calO$ is hereditary
if and only if $n(\calO)\leq 1$. If $\calO$ is Bass but
non-hereditary, then we have some further information on the quadratic
$F$-algebra $L$ with $\Emb(O_L, \calO)\neq \emptyset$ from 
\cite[Proposition~1.12]{Brzezinski-crelle-1990}:
\begin{itemize}
\item if $e(\calO)=1$, then  $L=F\times F$;
\item if $e(\calO)=-1$, then $L/F$ is the
  unique quadratic unramified field extension;
\item if $e(\calO)=0$, then $L/F$ is a ramified field extension.   
\end{itemize}
In fact, if $e(\calO)=0$ and $n(\calO)=2$, then $L/F$ can be 
any arbitrary quadratic ramified extension according to
\cite[(3.14)]{Brzezinski-crelle-1990}. If $F$ is nondyadic,
$e(\calO)=0$ and $n(\calO)\geq 3$, then we prove in
Lemma~\ref{lem:unique-ram-ext} that such an  $L/F$
is uniquely determined by $\calO$.

As
mentioned in Remark~\ref{rem:special-cases}, any order $\calO$ with
$e(\calO)\neq 0$ is automatically Bass.  If $e(\calO)=1$, then $\calO$
is a non-maximal  Eichler order, and it has exactly two minimal
overorders. Suppose that 
$e(\calO)\in \{-1, 0\}$ and $\calO$ is Bass but non-hereditary. Then from \cite[Proposition~1.12]{Brzezinski-1983}, $\calO$
has a unique minimal overorder $\calM(\calO)$, which is Bass by
default.  From \cite[Propositions~3.1 and
4.1]{Brzezinski-1983},
\begin{equation}
  \label{eq:40}
  n(\calM(\calO))=
  \begin{cases}
    n(\calO)-2 &\text{if } e(\calO)=-1,\\
    n(\calO)-1 &\text{if } e(\calO)=0,
  \end{cases}
\end{equation}
and $e(\calM(\calO))=e(\calO)$ if 
 $\calM(\calO)$ is again non-hereditary. 
 Thus starting from
$\calM^0(\calO):=\calO$, we  define $\calM^i(\calO):=\calM(\calM^{i-1}(\calO))$ recursively
to obtain a unique chain of Bass orders  terminating at a hereditary order
$\calM^m(\calO)$:
\begin{equation}
  \label{eq:11}
 \calO=\calM^0(\calO)\subset \calM^1(\calO)\subset
  \calM^2(\calO)\subset \cdots \subset \calM^{m-1}(\calO)\subset \calM^m(\calO), 
\end{equation}
where each $\calM^i(\calO)$ is a Bass
non-hereditary order for  $0\leq i\leq
m-1$. Furthermore, 
\begin{itemize}
\item $m=n(\calO)-1$ if $e(\calO)=0$; and 
\item $m=\fl{n(\calO)/2}$ if $e(\calO)=-1$, where $x\mapsto \fl{x}$ is the
  floor function.
\end{itemize}
The order $\calM^m(\calO)$ is called the \emph{hereditary closure} of
$\calO$ and will henceforth be denoted by $\calH(\calO)$. If $e(\calO)=-1$, then
$\calH(\calO)$ is always a maximal order by
\cite[Proposition~3.1]{Brzezinski-1983}. Thus when $e(\calO)=-1$,
$n(\calO)$ is even if $A\simeq \Mat_2(F)$, and $n(\calO)$ is odd if
$A$ is ramified (i.e.~$A$ is division).

Recall that $\nu: F^\times\twoheadrightarrow \Z$ denotes the discrete
valuation of $F$. We say that an element $x\in A^\times$ is \emph{even
  (resp.~odd)} if $\nu(\Nr(x))$ is even (resp.~odd). Let
$\calN^0(\calO)$ be the \emph{even normalizer group} of $\calO$, that is,  
\begin{equation}
  \label{eq:35}
  \calN^0(\calO):=\{x\in A^\times\mid x\calO x^{-1}=\calO, \text{ and
  } \nu(\Nr(x))\equiv 0\pmod{2}\}. 
\end{equation}

\begin{lem}
  If $e(\calO)=-1$, then
  \begin{equation}
    \label{eq:21}
\Nr(\calN(\calO))=
  \begin{cases}
    F^\times &\text{if $A$ is ramified}, \\
    F^{\times2}O_F^\times &\text{if $A$ is split.} 
  \end{cases}
  \end{equation}
\end{lem}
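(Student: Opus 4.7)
The plan is to establish each inclusion via a common lower bound plus a case-by-case upper bound. For the lower bound, two observations apply in both cases: first, $F^\times \subseteq \calN(\calO)$ always contributes $F^{\times 2} = \Nr(F^\times)$ to $\Nr(\calN(\calO))$; second, by the basic characterization of quaternion Bass orders recalled earlier (\cite[Proposition~1.12]{Brzezinski-crelle-1990}), the hypothesis $e(\calO)=-1$ forces $O_L \hookrightarrow \calO$, where $L/F$ is the unique unramified quadratic field extension. Hence $O_L^\times \subseteq \calO^\times \subseteq \calN(\calO)$ contributes $\Nr(O_L^\times) = O_F^\times$, the surjectivity of the norm on units coming from the fact that $L/F$ is unramified. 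Taken together, $F^{\times 2}O_F^\times \subseteq \Nr(\calN(\calO))$ in both cases.

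For the split case, I would then show $\calN(\calO) \subseteq \calN(\calH(\calO))$. By the uniqueness of the minimal overorder recalled just above (\ref{eq:11}), any $x \in \calN(\calO)$ must also normalize $\calM(\calO)$, and by induction the entire chain up to $\calH(\calO)$. Since $A \simeq \Mat_2(F)$ and $\calH(\calO)$ is maximal, it is conjugate to $\Mat_2(O_F)$, whose normalizer in $\GL_2(F)$ equals $F^\times \GL_2(O_F)$ with determinant image $F^{\times 2}O_F^\times$, i.e.~the subgroup of $F^\times$ consisting of elements of even valuation. This gives the reverse inclusion.

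For the ramified case, it suffices to produce a single element of $\calN(\calO)$ whose reduced norm has odd valuation, as this together with $F^{\times 2}O_F^\times$ generates $F^\times$. Let $O_A$ be the unique maximal order of $A$, and let $\Pi \in O_A$ be a uniformizer satisfying $\Pi^2 \in O_F$ of valuation $1$ and $\Pi \alpha \Pi^{-1} = \sigma(\alpha)$ for $\alpha \in L$, where $\sigma$ generates $\Gal(L/F)$. Using the explicit classification of Bass orders with $e(\calO) = -1$ extractable from Brzezinski's analysis, every such $\calO$ containing a chosen copy of $O_L$ has the form $\calO_k := O_L + \Pi^k O_A$ for some $k \geq 0$. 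Then
\begin{equation*}
  \Pi \calO_k \Pi^{-1} = \sigma(O_L) + \Pi^{k+1} O_A \Pi^{-1} = O_L + \Pi^k O_A = \calO_k,
\end{equation*}
using $\Pi O_A = O_A \Pi$. Hence $\Pi \in \calN(\calO)$ with $\nu(\Nr(\Pi)) = 1$, as required.

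The main obstacle is the ramified case, where locating a concrete normalizing element depends on the structural classification of Bass orders with Eichler invariant $-1$, equivalently on the explicit normalizer formulas of \cite[Theorems~2.2 and~2.5]{Brzezinski-crelle-1990} already alluded to in the text. Once that structural input is in hand, the verification $\Pi \calO \Pi^{-1}=\calO$ is a direct computation, and assembling the inclusions is routine.
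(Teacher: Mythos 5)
Your proof is correct, and it takes a genuinely different route from the paper's. The lower bound $F^{\times 2}O_F^\times \subseteq \Nr(\calN(\calO))$ is obtained by the same mechanism the paper uses (embed $O_{E_\ur}$ into $\calO$, use surjectivity of the unit norm for an unramified extension). But the paper then closes both cases in one stroke by invoking \cite[Theorem~2.2]{Brzezinski-crelle-1990}: $\calN(\calO) = \calN^0(\calO)$ if and only if $n(\calO)$ is even, which for $e(\calO) = -1$ holds exactly when $A$ is split since $\calH(\calO)$ is then maximal. That immediately bounds $\Nr(\calN(\calO))$ above by the even-valuation subgroup in the split case and produces an odd element in the ramified case. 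You instead rebuild each half by hand: for $A$ split you pass to $\calN(\calO) \subseteq \calN(\calH(\calO))$ through the chain of unique minimal overorders and compute the normalizer of $\Mat_2(O_F)$, and for $A$ ramified you exhibit $\Pi \in \calN(\calO)$ explicitly using the presentation $\calO \simeq O_L + \Pi^k O_A$. Your ramified-case computation is in effect a direct proof of the odd case of the theorem the paper cites, so you trade one black-box input (the normalizer theorem) for another (the structural classification of the $e = -1$ orders in the division case), and the net effect is a slightly longer but more self-contained argument. One small point: as you stated it, your appeal to \cite[Proposition~1.12]{Brzezinski-crelle-1990} for the embedding $O_L \hookrightarrow \calO$ covers only the nonhereditary case; the paper separately handles $n(\calO)=1$ (the maximal order of the division algebra) by citing \cite[Corollary~II.1.7]{vigneras}. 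Your ramified-case form $\calO = O_L + \Pi^k O_A$ with $k = 0$ covers this anyway, so nothing breaks, but the citation as written is formally incomplete.
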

\begin{proof}
  Let $E:=E_\ur$ be the unique unramified quadratic field extension of $F$.  Since
  $e(\calO)=-1$,  there
  exists an embedding $\varphi: O_E\to \calO$. Indeed, if $\calO$ is
  nonhereditary, this follows from
  \cite[Proposition~1.12]{Brzezinski-crelle-1990} as discussed 
   above. If $\calO$ is
  hereditary, then  $n(\calO)=1$, which implies that $A$ is division
  and  $\calO$ is the unique maximal
  order in $A$.  Hence $\Emb(O_E, \calO)\neq
  \emptyset$ by \cite[Corollary~II.1.7]{vigneras}. For simplicity, let
  us identify $O_E$ with its image in $\calO$ via $\varphi$. From
  \cite[Proposition~V.1]{Serre_local},  $\Nm_{E/F}(O_E^\times)=O_F^\times$. It follows
  that 
  \[\Nr(\calN(\calO))\supseteq \Nr(F^\times O_E^\times)=F^{\times
      2}O_F^\times, \]
  which is an index $2$-subgroup in $F^\times$. 
On the other hand, from 
\cite[Theorem~2.2]{Brzezinski-crelle-1990}, $\calN(\calO)=\calN^0(\calO)$ if and only if $n(\calO)$
is even (which happens if and only if $A$ is split as observed as above). The lemma follows immediately. 
\end{proof}

In the local case, Bass orders  can be described explicitly according
to\footnote{Caution: there is a minor typo in the line immediately
  below equation (1.5) in \cite{Brzezinski-crelle-1990}, instead of
  $0\leq r-s\leq 1$, it should read $0\leq s-r\leq 1$. Compare with
  \cite[Proposition~5.4(c)]{Brzezinski-1983}. We should emphasize that
  this bears no effect on the validity of results of
  \cite{Brzezinski-crelle-1990}, as most of the deduction relies on
  \cite[(2.8)]{Brzezinski-crelle-1990} instead.}
\cite[\S1]{Brzezinski-crelle-1990} (see also \cite[Propositions~5.4
and 5.6]{Brzezinski-1983}). For reasons to be explained 
in the proof of Theorem~\ref{thm:main} (see \S\ref{subsec:step-i}), we focus exclusively on
local Bass orders of Eichler invariant zero.


Let $\pi$ be a uniformizer of $F$. Pick an element $\varepsilon\in F$
in the following way:
\begin{itemize}
\item if $A$ is split (i.e.~$A\simeq \Mat_2(F)$), then put
  $\varepsilon=0$;
\item if $A$ is ramified, then choose $\varepsilon\in O_F^\times$ such
  that 
  $1-4\varepsilon\in O_F^\times\smallsetminus O_F^{\times 2}$
  (i.e.~$1-4\varepsilon$ is a unit but a non-square). The existence of
  such a unit is guaranteed by \cite[63:4]{o-meara-quad-forms}. 
\end{itemize}
In the ramified case, the assumption on $\varepsilon$ implies that
\begin{equation}
  \label{eq:20}
1+\beta+\varepsilon\beta^2\in O_F^\times, \qquad \forall \beta\in
  O_F. 
\end{equation}

We can  choose an $F$-basis $\{1, x_1, x_2, x_3\}$ of $A$ satisfying the
following conditions:
\begin{equation}
  \label{eq:16}
  x_1^2=x_1-\varepsilon, \quad x_2^2=\pi,\quad x_2x_1=(1-x_1)x_2,
  \quad x_3=x_1x_2.
\end{equation}
Indeed, if $A$ is split, then we put as in \cite[(2.7)]{Brzezinski-crelle-1990}:
\begin{equation}
  \label{eq:17}
  x_1:=
  \begin{bmatrix}
 1 & 0 \\ 0 & 0    
  \end{bmatrix},\qquad   x_2:=\begin{bmatrix}
 0 & 1 \\ \pi & 0    
  \end{bmatrix},\qquad x_3:= \begin{bmatrix}
 0 & 1 \\ 0 & 0    
  \end{bmatrix}.
\end{equation}
If $A$ is ramified, then from \cite[63:3]{o-meara-quad-forms},
$F(x_1)$ coincides with $E_\ur$, the unique unramified quadratic  field 
extension of $F$, and the
existence of  a basis satisfying (\ref{eq:16}) is guaranteed by
\cite[Corollary~II.1.7]{vigneras}. 

Given a set $X$ in a finite dimensional $F$-vector space $V$, we write $\dangle{X}$ for the
$O_F$-submodule of $V$ spanned by $X$. As remarked right after
Definition~\ref{defn:eichler-invariant}, if $e(\calO)=0$, then
$n(\calO)\geq 2$. 
According to
\cite[(2.8)]{Brzezinski-crelle-1990}, after replacing $\pi$ by another
suitable uniformizer if necessary, every Bass order $\calO$ with
$e(\calO)=0$ and $n(\calO)=n$ 
 is isomorphic to
\begin{gather}
  \label{eq:18}
  \dangle{1,\ x_{\alpha \beta},\ \pi^r x_1,\ \pi^s x_3}, \qquad
  \text{where}\\
r+s=n-1,\quad 0\leq r-s\leq 1,\quad x_{\alpha\beta}=\alpha
x_1+x_2+\beta x_3, \quad \alpha\in \grp,\ \beta\in O_F, \label{eq:19}\\
\text{and}\qquad 1+\beta\in O_F^\times \quad\text{if $A$ is split} \label{eq:27}. 
\end{gather}
There is no other restriction on $\beta$  when
$A$ is ramified. From (\ref{eq:19}),
\begin{equation}
  \label{eq:33}
  r=\fl{\frac{n}{2}}\geq 1, \quad \text{and} \quad
  s=\fl{\frac{n-1}{2}}\geq 0.  
\end{equation}

Let $\calO$ be as in (\ref{eq:18}). Given $y\in A$, the discriminant  of $y$ is defined to be
$\Delta(y):=\Tr(y)^2-4\Nr(y)$. If $y\in \calO$, we write
$y$ uniquely as $a+bx_{\alpha\beta}+c\pi^r x_1+d\pi^sx_3$ with $a, b,
c, d\in O_F$. Direct calculation yields (see \cite[(3.19), (3.20)]{Brzezinski-crelle-1990}) 
\begin{align}
\Tr(y)=&2a+b\alpha+c\pi^r,\label{eq:29}\\
\begin{split} \Nr(y)=&a^2+a(b \alpha+c
    \pi^r)+\varepsilon(b \alpha+c \pi^r)^2 \\
  &-\pi\left[b^2+b(b \beta+d \pi^s)+\varepsilon(b
      \beta+d \pi^s)^2\right], \end{split}\label{eq:26}\\
  \begin{split} \Delta(y)=&(b \alpha+c \pi^r)^2(1-4 \varepsilon) \\ &+4\pi\left[b^2+b(b \beta+d \pi^s)+\varepsilon(b \beta+d \pi^s)^2\right]. \end{split} \label{eq:24}
\end{align}
In particular, we have 
\begin{equation*}
  \Tr(x_{\alpha\beta})=\alpha, \quad
  \Nr(x_{\alpha\beta})=\alpha^2\varepsilon-\pi(1+\beta+\varepsilon\beta^2),\quad
  \Delta(x_{\alpha\beta})=\alpha^2(1-4\varepsilon)+4\pi(1+\beta+\varepsilon\beta^2). 
\end{equation*}
Since $\pi\vert \alpha$ by
(\ref{eq:19}) and $1+\beta+\varepsilon \beta^2\in O_F^\times$ by
(\ref{eq:20}) and (\ref{eq:27}), $F(x_{\alpha\beta})$ is a ramified quadratic extension of
$F$,  and $x_{\alpha\beta}$ is a uniformizer of $F(x_{\alpha\beta})$. 
Particularly, the  ring of integers of $F(x_{\alpha\beta})$ coincides with $\dangle{1,
  x_{\alpha\beta}}$.
 Since $r\geq 1$ by (\ref{eq:33}),  it is clear
from  (\ref{eq:26}) that  
\begin{equation}
  \label{eq:42}
  y\in \calO^\times\quad \text{if and only if}\quad a\in O_F^\times. 
\end{equation}

\begin{lem}
  If $e(\calO)=0$ and $F$ is nondyadic, then
  \begin{equation}
  \label{eq:43}
  \Nr(\calO^\times)=O_F^{\times2}.
\end{equation}
\end{lem}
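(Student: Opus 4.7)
The plan is to prove the two inclusions separately. The containment $O_F^{\times 2}\subseteq \Nr(\calO^\times)$ is immediate: for any $u\in O_F^\times$ we have $u\in \calO^\times$ (with inverse $u^{-1}\in O_F^\times\subseteq\calO$), and the reduced norm of a central element is its square, so $\Nr(u)=u^2$.

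For the reverse containment I would exploit the explicit normal form (\ref{eq:18}) for a Bass order of Eichler invariant zero. Write an arbitrary element $y\in\calO^\times$ as
\[
y=a+bx_{\alpha\beta}+c\pi^r x_1+d\pi^s x_3, \qquad a,b,c,d\in O_F,
\]
and recall that the criterion (\ref{eq:42}) forces $a\in O_F^\times$. Inspecting the formula (\ref{eq:26}) for $\Nr(y)$, the conditions $\alpha\in\grp$ (from (\ref{eq:19})) and $r\geq 1$ (from (\ref{eq:33})) imply $b\alpha+c\pi^r\in\grp$, so the summands $a(b\alpha+c\pi^r)$ and $\varepsilon(b\alpha+c\pi^r)^2$ both lie in $\grp$, as does the entire bracketed term multiplied by $\pi$. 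Consequently $\Nr(y)\equiv a^2\pmod{\grp}$, and we may write $\Nr(y)=a^2(1+\delta)$ with $\delta\in\grp$.

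To finish, I would invoke Hensel's lemma on $X^2-(1+\delta)\in O_F[X]$: since $F$ is nondyadic, $2\in O_F^\times$, so the derivative $2X$ is a unit at $X=1$, and $1+\delta$ is therefore a square in $O_F^\times$. Hence $\Nr(y)=a^2(1+\delta)\in O_F^{\times 2}$, completing the inclusion. The argument is uniform across the split case ($\varepsilon=0$) and the ramified case ($\varepsilon\in O_F^\times$), because $\varepsilon$ enters only through quantities already confined to $\grp$ by the divisibility conditions on $\alpha$ and $\pi^r$, or multiplied by the external $\pi$; no separate casework is required. I do not anticipate any substantive obstacle beyond unpacking the normal form and applying Hensel's lemma.
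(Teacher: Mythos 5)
Your proposal is correct and follows essentially the same route as the paper: use the normal form (\ref{eq:18}), the unit criterion (\ref{eq:42}), and the norm formula (\ref{eq:26}) to reduce $\Nr(y)$ modulo $\grp$ to $a^2$, then apply Hensel's lemma (which is exactly what the paper packages as (\ref{eq:34})). Your write-up is merely more explicit than the paper's terse one-line conclusion, in particular by spelling out the easy containment $O_F^{\times 2}\subseteq\Nr(\calO^\times)$, but the underlying argument is identical.
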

\begin{proof}
For any $u\in O_F^\times$,
we write $\tilde{u}$ for its canonical image in the residue field
$\grk=O_F/\grp$. From Hensel's lemma and the assumption that
$\fchar(\grk)\neq 2$, we have 
\begin{equation}
  \label{eq:34}
O_F^{\times 2}=\{u\in O_F^\times\mid \tilde{u}\in
\grk^{\times 2}\}. 
\end{equation}
The equality (\ref{eq:43}) follows directly by combining
(\ref{eq:26}), (\ref{eq:42})
and (\ref{eq:34}). 
\end{proof}

When $F$ is nondyadic,  it  
 has exactly two ramified quadratic
extensions up to isomorphism, namely $F(\sqrt{\pi})$ and $F(\sqrt{\pi u})$, where $u\in
O_F^\times\smallsetminus O_F^{\times 2}$.
\begin{lem}\label{lem:unique-ram-ext}
Suppose that $F$ is nondyadic, and $\calO\subset A$ is a Bass order
with $e(\calO)=0$ and $n(\calO)\geq 3$. Up to isomorphism,  there exists a \emph{unique}
 quadratic extension $M/F$ such that $O_M$ embeds into
$\calO$.
\end{lem}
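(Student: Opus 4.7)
The plan is to use the explicit basis description \eqref{eq:18} of $\calO$ together with the discriminant formula \eqref{eq:24} to identify precisely which square class in $O_F^\times/O_F^{\times 2}$ can arise as $\Delta(y)/\pi$ for elements $y\in\calO$ generating the ring of integers of a ramified quadratic extension. Since $F$ is nondyadic, the two ramified quadratic extensions $F(\sqrt{\pi})$ and $F(\sqrt{\pi u})$ are distinguished by this square class, so pinning it down in terms of data intrinsic to $\calO$ will give uniqueness. Concretely, an embedding $O_M\hookrightarrow \calO$ with $M/F$ ramified is given by some $y\in \calO\setminus O_F$ with $O_F[y]=O_M$, which forces $\nu(\Delta(y))=1$.

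Writing any such $y=a+bx_{\alpha\beta}+c\pi^r x_1+d\pi^s x_3$ with $a,b,c,d\in O_F$, I would evaluate \eqref{eq:24} term by term. The first summand $(b\alpha+c\pi^r)^2(1-4\varepsilon)$ lies in $\pi^2 O_F$ because $\pi\mid\alpha$ by \eqref{eq:19} and $r\geq 1$ by \eqref{eq:33}. For the second summand $4\pi[b^2+b(b\beta+d\pi^s)+\varepsilon(b\beta+d\pi^s)^2]$, the hypothesis $n(\calO)\geq 3$ gives $s=\fl{(n-1)/2}\geq 1$, so modulo $\pi$ the bracket collapses to $b^2(1+\beta+\varepsilon\beta^2)$; by \eqref{eq:20} in the ramified case and \eqref{eq:27} in the split case, $1+\beta+\varepsilon\beta^2\in O_F^\times$. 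Consequently $\nu(\Delta(y))=1$ forces $b\in O_F^\times$, and in that event
\[
\Delta(y)/\pi \equiv 4b^2(1+\beta+\varepsilon\beta^2) \pmod{\pi}.
\]
Its class in $O_F^\times/O_F^{\times 2}$ equals that of $1+\beta+\varepsilon\beta^2$, which depends only on $\calO$. This establishes uniqueness: the only candidate is $M=F\bigl(\sqrt{\pi(1+\beta+\varepsilon\beta^2)}\bigr)$.

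Existence is then immediate by taking $y=x_{\alpha\beta}$, which was already observed below \eqref{eq:24} to be a uniformizer of the ramified quadratic field $F(x_{\alpha\beta})$ whose ring of integers is $\dangle{1,x_{\alpha\beta}}\subset\calO$; alternatively, it follows from \cite[Proposition~1.12]{Brzezinski-crelle-1990} quoted above. The only genuine obstacle is the bookkeeping in the discriminant expansion, and in particular isolating why the hypothesis $n(\calO)\geq 3$ is essential: if instead $n(\calO)=2$ then $s=0$, the term $d\pi^s=d$ varies freely modulo $\pi$, and in the ramified case (where $\varepsilon$ is a unit) the bracket can realize both square classes, which agrees with the statement of \cite[(3.14)]{Brzezinski-crelle-1990} that every ramified quadratic extension embeds.
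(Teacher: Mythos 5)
Your proof is correct and follows essentially the same route as the paper: express $y$ in the basis $\{1, x_{\alpha\beta}, \pi^r x_1, \pi^s x_3\}$, use $n(\calO)\geq 3$ to get $s\geq 1$, and extract the square class of $1+\beta+\varepsilon\beta^2$ from the discriminant. Your version is marginally cleaner — by working directly with the translation-invariant $\Delta(y)$ from \eqref{eq:24} rather than first deducing $\pi\mid a$ from $\Tr(y)\in\grp$ as the paper does, you skip a step that the paper's route makes look necessary but isn't.
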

\begin{proof}
  Only the uniqueness of such an $M$ need to be proved. From
  \cite[Proposition~1.12]{Brzezinski-crelle-1990}, $M/F$ is
  necessarily ramified. 
  Write $\calO$  as in
  (\ref{eq:18}) and suppose that  $M/F$ is a ramified quadratic
extension such that there exists an embedding $\varphi:
  O_M\to \calO$. According to \cite[Proposition~I.18]{Serre_local},
  the characteristic polynomial over $F$ of the uniformizer $\pi_M\in M$  is
  an Eisenstein polynomial of degree $2$, and $O_M=\dangle{1, \pi_M}$.
  If we put $y:=\varphi(\pi_M)$, then $\Tr(y)\in \grp$ and $\Nr(y)\in
  O_F^\times\pi$. Write $y=a+bx_{\alpha\beta}+c\pi^r x_1+d\pi^sx_3$ with $a, b,
  c, d\in O_F$ as before. Since $n(\calO)\geq 3$, we have $r\geq s\geq
  1$ by (\ref{eq:33}). 
  From (\ref{eq:29}) and (\ref{eq:26}), the
previous conditions on $\Tr(y)$ and $\Nr(y)$ are equivalent to 
\begin{equation}
  \label{eq:32}
  \pi\vert a \quad\text{and}\quad b\in O_F^\times. 
\end{equation}
Indeed, since $F$ is nondyadic and $\pi|\alpha$ by (\ref{eq:19}), we
have 
$\Tr(y)\in \grp$ if and only if $\pi\vert a$ by (\ref{eq:29}). Suppose
further that $\pi \vert
a$. Then from (\ref{eq:26}), we have
\begin{equation}
  \label{eq:28}
  \begin{split}
    \Nr(y)\equiv -\pi b^2(1+\beta+\varepsilon\beta^2) \pmod{\pi^2}, 
  \end{split}
\end{equation}
which implies that
\[  \Nr(y)\in
  O_F^\times\pi\quad \text{if and only if}\quad  b^2(1+\beta+\beta^2\varepsilon)\in O_F^\times.\]
 From
(\ref{eq:20}) and (\ref{eq:27}), $1+\beta+\varepsilon\beta^2\in O_F^\times$ by our choices of
$\varepsilon$ and $\beta$.  Thus $b\in O_F^\times$ is both necessary  and
sufficient.

Now an easy calculation shows that 
$\Delta(y)/\Delta(x_{\alpha\beta})\equiv b^2\pmod{\grp}$.  It follows
from (\ref{eq:34}) that $\Delta(y)\in
\Delta(x_{\alpha\beta})O_F^{\times2}$. Therefore, $M\simeq
F(x_{\alpha\beta})$, and hence it is uniquely determined  up to isomorphism.  
\end{proof}

Let $\calO$ be an arbitrary Bass order of Eichler invariant
  $0$.  The normalizer group $\calN(\calO)$ has been described effectively in
\cite[Theorem~2.5]{Brzezinski-crelle-1990}. First, suppose that
$n(\calO)=2$. Then $\calM(\calO)$ is the   hereditary
closure of $\calO$, and 
\begin{equation}
  \label{eq:59}
\calO=O_F+\grJ(\calM(\calO)),  
\end{equation}
 where $\grJ(\calM(\calO))$ denotes the
Jacobson radical of  $\calM(\calO)$. It follows that $ \calN(\calO)=\calN(\calM(\calO))$.
If $A$ is split, then $\calM(\calO)$ is an Eichler order of level
$\grp$; if $A$ is ramified, then $\calM(\calO)$ is the unique maximal
order. In both cases, the reduced norm of the normalizer group $\calN(\calM(\calO))$
coincides with $F^\times$. Therefore,
\begin{equation}
  \label{eq:48}
  \Nr(\calN(\calO))=F^\times \qquad\text{if $e(\calO)=0$ and
    $n(\calO)=2$}. 
\end{equation}
Next, suppose that $n(\calO)\geq 3$. Write $\calO$ as in (\ref{eq:18})
and put $M=F(x_{\alpha\beta})$. 
 From  \cite[Theorem~2.5]{Brzezinski-crelle-1990},  
the even normalizer group $\calN^0(\calO)$ is a  subgroup of index $2$ in  $\calN(\calO)$,
and  
\begin{equation}
  \label{eq:36}
  \calN(\calO)=\calN^0(\calO)\bigsqcup \calN^0(\calO)x_{\alpha\beta}. 
\end{equation}
See \cite[p.~177]{Brzezinski-crelle-1990}. In particular,
\begin{equation}
  \label{eq:37}
\calN(\calO)\supseteq x_{\alpha\beta}^\Z\cdot O_M^\times=M^\times. 
\end{equation}
Since $[F^\times:\Nm_{M/F}(M^\times)]=2$ by local class field theory,
we find that
\begin{equation}
  \label{eq:46}
\Nr(\calN(\calO))\text{ coincides with either } F^\times \text{ or } \Nm_{M/F}(M^\times).
\end{equation}
For simplicity, we only write down $\calN^0(\calO)$ under the assumption that $F$ is nondyadic:
\begin{equation}
  \label{eq:39}
  \calN^0(\calO)=    F^\times\calM(\calO)^\times\bigsqcup
  F^\times\calM(\calO)^\times\sigma_0 \qquad \text{if } n(\calO)\geq 3,
\end{equation}
where $\sigma_0\in \calH(\calO)^\times\smallsetminus
\calM^{n-2}(\calO)^\times$. More explicitly, from
\cite[(2.10)]{Brzezinski-crelle-1990}, we have 
\begin{align}
  \label{eq:41}
  \sigma_0&=-1-\frac{A\alpha}{2}+Ax_{\alpha\beta}+2x_1,  \qquad
  \text{where}\\
  A&=\frac{-2\alpha(1-4\varepsilon)}{\alpha^2(1-4\varepsilon)+4\pi(1+\beta+\beta^2\varepsilon)}\in
  O_F. \label{eq:44}
\end{align}
By direct calculation,
\begin{equation}
  \label{eq:45}
  \Nr(\sigma_0)=-\left(1+\frac{A\alpha}{2}\right)^2(1-4\varepsilon)-\pi
  A^2(1+\beta+\beta^2\varepsilon)\in
  O_F^\times. 
\end{equation}

\begin{lem}\label{lem:normalizer-red-norm}
Write $\calO$ as in (\ref{eq:18}) and put $M=F(x_{\alpha\beta})$. Suppose
that $n(\calO)\geq 3$ and $F$ is nondyadic.  Then
  $  \Nr(\calN(\calO))=\Nm_{M/F}(M^\times) $ if and only if one of the
  following conditions holds:
  \begin{enumerate}[label=(\roman*)]
  \item $A$ is split, and $-1\in \grk^{\times 2}$;
  \item   $A$ is ramified, and $-1\not\in \grk^{\times 2}$. 
  \end{enumerate}
\end{lem}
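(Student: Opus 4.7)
The plan is to reduce the question to whether the element $\sigma_0$ from (\ref{eq:41}) has reduced norm in $\Nm_{M/F}(M^\times)$, and then to compute that reduced norm modulo $\grp$ using (\ref{eq:44})--(\ref{eq:45}). Combining (\ref{eq:36}) and (\ref{eq:39}), every element of $\calN(\calO)$ lies in a coset of $F^\times\calM(\calO)^\times$ represented by one of $1, \sigma_0, x_{\alpha\beta}, \sigma_0 x_{\alpha\beta}$. Since $n(\calO)\geq 3$, the minimal overorder $\calM(\calO)$ again has Eichler invariant $0$ with $n(\calM(\calO))=n(\calO)-1\geq 2$ by (\ref{eq:40}), so (\ref{eq:43}) gives $\Nr(\calM(\calO)^\times)=O_F^{\times 2}$ and hence $\Nr(F^\times\calM(\calO)^\times)=F^{\times 2}$. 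As $x_{\alpha\beta}\in M^\times$, we have $\Nr(x_{\alpha\beta})\in \Nm_{M/F}(M^\times)$, so in view of (\ref{eq:46}), the equality $\Nr(\calN(\calO))=\Nm_{M/F}(M^\times)$ is equivalent to $\Nr(\sigma_0)\in\Nm_{M/F}(M^\times)$.

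Next I would invoke a standard local fact: for a ramified quadratic extension $M/F$ of a nondyadic local field, $\Nm_{M/F}(O_M^\times)=O_F^{\times 2}$, so $\Nm_{M/F}(M^\times)\cap O_F^\times=O_F^{\times 2}$. The quick reason is that any $O_M$-unit has the form $a+b\pi_M$ with $a\in O_F^\times$, whose norm reduces mod $\grp$ to $\bar{a}^2\in\grk^{\times 2}$; combined with (\ref{eq:34}) or Hensel's lemma this gives the claim. Since (\ref{eq:45}) shows $\Nr(\sigma_0)\in O_F^\times$, the membership question further reduces to whether $\Nr(\sigma_0)$ is a square modulo $\grp$.

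It remains to compute $\Nr(\sigma_0)\pmod\grp$. From (\ref{eq:19}) we have $\alpha\in\grp$, while $1+\beta+\varepsilon\beta^2\in O_F^\times$ by (\ref{eq:20}) and (\ref{eq:27}); hence the denominator $\alpha^2(1-4\varepsilon)+4\pi(1+\beta+\beta^2\varepsilon)$ appearing in (\ref{eq:44}) has valuation exactly $1$, whereas the numerator has valuation $\geq 2$. Therefore $A\alpha\in\grp$, and because $F$ is nondyadic also $A\alpha/2\in\grp$, giving $(1+A\alpha/2)^2\equiv 1\pmod{\grp}$. The term $\pi A^2(1+\beta+\beta^2\varepsilon)$ in (\ref{eq:45}) is manifestly in $\grp$. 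Thus
\begin{equation*}
\Nr(\sigma_0)\equiv -(1-4\varepsilon)\pmod{\grp}.
\end{equation*}
In the split case $\varepsilon=0$, so $\Nr(\sigma_0)\equiv -1$, which is a square in $\grk^\times$ precisely when $-1\in\grk^{\times 2}$, giving (i). In the ramified case $1-4\varepsilon\in O_F^\times\smallsetminus O_F^{\times 2}$ by construction, so $-(1-4\varepsilon)$ is a square mod $\grp$ exactly when $-1\notin\grk^{\times 2}$, giving (ii). No step here is hard in an essential way; the only delicate point is the valuation bookkeeping in $A\alpha$, but once that is handled the two cases fall out symmetrically from the same congruence.
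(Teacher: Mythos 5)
Your proof is correct and follows essentially the same route as the paper's: reduce via (\ref{eq:36}), (\ref{eq:39}), (\ref{eq:43}) and tame ramification to deciding whether $\Nr(\sigma_0)$ is a square, then read off the answer from (\ref{eq:45}) case by case. Your explicit reduction of $\Nr(\sigma_0)\equiv -(1-4\varepsilon)\pmod\grp$ is a slightly cleaner unification of the two cases, and the valuation check that $A\alpha/2\in\grp$ fills in bookkeeping the paper leaves implicit, but there is no substantive difference.
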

\begin{proof}
Since $M/F$ is tamely ramified, $\Nm_{M/F}(O_M^\times)=O_F^{\times 2}$
according to \cite[Corollary~V.7]{Serre_local}. Combining
(\ref{eq:43}), (\ref{eq:36}) and (\ref{eq:39}), we see that
$  \Nr(\calN(\calO))=\Nm_{M/F}(M^\times) $ if and only if
$\Nr(\sigma_0)\in O_F^{\times 2}$. If $A$ is split, then
$\varepsilon=0$ by construction. It follows from (\ref{eq:34}) that
$\Nr(\sigma_0)\in O_F^{\times 2}$ if and only if $-1\in F^{\times
  2}$ in this case. Next, suppose that $A$ is ramified.  Then
$(1-4\varepsilon)\not\in O_F^{\times 2}$ by construction. Thus $\Nr(\sigma_0)\in O_F^{\times 2}$ if and only if $-1\not\in F^{\times
  2}$ in this case. Lastly, since $F$ is nondyadic, $-1\in F^{\times
  2}$ if and only if $-1\in \grk^{\times 2}$. 
\end{proof}

\begin{proof}[Proof of Proposition~\ref{prop:description-Sigma}]
  Suppose that $F$ is a number field, and $\scrG$ is a genus of Bass
  orders in $A$ satisfying (\ref{eq:4}). Let $\calO$ be an arbitrary member of
  $\scrG$,  and $\Sigma_\scrG$ be the spinor
  genus field of $\scrG$.    The description of $\Sigma_\scrG$ at the
  infinite places  of $F$ or at a finite prime $\grp$ with
  $e_\grp(\calO)\in \{1, 2\}$ is well known. See
  \cite[Proposition~31.2.1]{voight-quat-book} for example. Thus we focus on the
  finite primes $\grp$ with $e_\grp(\calO)\in \{ -1, 0\}$. 

  First, suppose that   $e_\grp(\calO)=-1$.  If $A_\grp$ is split, then
  $\Nr(\calN(\calO_\grp))=F_\grp^{\times2}O_{F_\grp}^\times$ by (\ref{eq:21}). Hence
  $\Sigma_\scrG/F$ is unramified at $\grp$. This
  proves part (1d) of
  Proposition~\ref{prop:description-Sigma}. Similarly, if $A_\grp$ is
  ramified, then $\Nr(\calN(\calO_\grp))=F_\grp^\times$, and hence
  $\Sigma_\scrG/F$ splits completely at $\grp$. Part (2a) of the
  proposition follows.

  Next, suppose that $e_\grp(\calO)=0$. 
  If one of the following
  conditions holds:
  \begin{itemize}
  \item $n_\grp(\calO)=2$,
      \item  $e_\grp(\calO)=0$, $n_\grp(\calO)\geq 3$, $A$ is split at
      $\grp$, and $-1\not\in \grk^{\times 2}$,
          \item  $e_\grp(\calO)=0$, $n_\grp(\calO)\geq 3$, $A$ is ramified at
      $\grp$, and $-1\in \grk^{\times 2}$,  
   \end{itemize}
then $\Nr(\calN(\calO_\grp))=F_\grp^\times$ according to
(\ref{eq:48}), (\ref{eq:46})  and
Lemma~\ref{lem:normalizer-red-norm}. Note that the condition
  $n_\grp(\calO)\geq 3$ implies that $\grp$ is nondyadic by assumption
  (\ref{eq:4}).  This proves part (2c)--(2e) of
Proposition~\ref{prop:description-Sigma}. Part (3) of the
proposition follows directly from
Lemma~\ref{lem:normalizer-red-norm}. 
\end{proof}

%


\section{The proof of the main theorem}
\label{sec:proof-main}
We carry out the proof of Theorem~\ref{thm:main}
 in  three  steps. 
\numberwithin{thmcounter}{subsection}
\subsection{Step (I): reduction to the local case}
\label{subsec:step-i}
Let $F$ be a number
field, and  $\scrG$ be a genus of Bass
  orders in $A$ satisfying  condition (\ref{eq:4}). Fix an order
  $\calO$ in $\scrG$.  Let $K/F$ be a quadratic field
  extension embeddable into $A$, and $B$ be an order in $K$.  Suppose
  that  for each finite prime $\grp$ of $F$, there exists an optimal
  embedding $\varphi_\grp: B_\grp\to \calO_\grp$. Put
\begin{equation}
  \label{eq:6}
  \calE_\grp(\varphi_\grp, B_\grp, \calO_\grp):=\{g_\grp\in A_\grp^\times \mid \varphi_\grp(K_\grp)\cap g_\grp\calO_\grp
  g_\grp^{-1}=\varphi_\grp(B_\grp)\}. 
\end{equation}
If $\varphi_\grp, B_\grp$ and $\calO_\grp$ are clear
from the context, we simply write $\calE_\grp$ for $\calE_\grp(\varphi_\grp, B_\grp, \calO_\grp)$. 
Clearly, $\calE_\grp$ is invariant under left translation by 
$\varphi(K_\grp^\times)$ and right translation by
$\calN(\calO_\grp)$.  The map $g_\grp\mapsto g_\grp^{-1}\varphi_\grp g_\grp$ induces a
bijection 
\begin{equation}
  \label{eq:70}
   \varphi_\grp(K_\grp^\times)\bsh \calE_\grp\simeq \Emb(B_\grp, \calO_\grp).
\end{equation} Moreover, 
\begin{equation}
  \label{eq:54}
\calE_\grp\supseteq \varphi(K_\grp^\times)\calN(\calO_\grp).
\end{equation}
It follows that $\Nr(\calE_\grp)$ is a subgroup of
$F_\grp^\times$ of index at most $2$, and  $\Nr(\calE_\grp)$ 
does not depend on the choices of $\varphi_\grp\in \Emb(B_\grp, \calO_\grp)$.  See \cite[\S2.2]{xue-yu:spinor-class-no}.
Let us define a finite set of  primes of $F$ as follows
\begin{equation}
  \label{eq:23}
  S:= \{\grp\mid e_\grp(\calO)=0\quad\text{and}\quad (K/\grp)\neq 1\}.
\end{equation}
From \cite[Theorem~2.11]{xue-yu:spinor-class-no}, $B$
is optimally spinor selective for $\scrG$ if and only if
\begin{equation}
  \label{eq:7}
  K\subseteq \Sigma_\scrG, \quad \text{and}\quad
  \Nm_{K/F}(K_\grp^\times)=\Nr(\calE_\grp)\text{ for every } \grp\in S. 
\end{equation}

Assume that $K\subseteq \Sigma_\scrG$ from now on.  From
Proposition~\ref{prop:description-Sigma}, if $\grp\in S$, then both of
the following conditions hold:
\begin{enumerate}
\item[($\dagger$)] $\grp$ is nondyadic, $e_\grp(\calO)=0$, and   $n_\grp(\calO)\geq 3$,
\item[($\ddagger$)]  $K_\grp$ is the unique ramified quadratic extension of $F_\grp$
  such that $O_{K_\grp}$ embeds into $\calO_\grp$, and
  $\Nr(\calN(\calO_\grp))=\Nm_{K_\grp/F_\grp}(K_\grp^\times)$. From
  Lemma~\ref{lem:normalizer-red-norm}, the
  last identity is equivalent to the following:
  \begin{itemize}
    \item either $A_\grp$ is split with $-1\in \grk_\grp^{\times 2}$,
      \item or $A_\grp$ is ramified with $-1\not\in \grk_\grp^{\times 2}$.  
    \end{itemize}
\end{enumerate}
Thus we may
replace $S$ in (\ref{eq:23}) by
\begin{equation}
  \label{eq:50}
S:= \{\grp\mid  \text{both conditions ($\dagger$) and
  ($\ddagger$) hold at } \grp\}.
\end{equation}
To prove Theorem~\ref{thm:main},  we need to show that for each
$\grp\in S$ as above,    $\Nr(K_\grp^\times)=\Nr(\calE_\grp)$ 
if and only if one of the following conditions holds
\begin{enumerate}[label=(\roman*)]
   \item  $n_\grp(\calO)\geq 2i_\grp(B)+3$, 
   \item $n_\grp(\calO)=2i_\grp(B)+1$, $A$ is split at $\grp$, and $\abs{\grk_\grp}=5$,
   \item   $n_\grp(\calO)=2i_\grp(B)+1$, $A$ is ramified at $\grp$, and $\abs{\grk_\grp}=3$. 
   \end{enumerate}
   See (\ref{eq:3}) for the definitions of $n_\grp(\calO)$ and
   $i_\grp(B)$. If $A$ is split at $\grp$, then the assumption that $\grp$ is nondyadic and
$-1\in \grk_\grp^{\times2}$ already implies that $\abs{\grk_\grp}\geq
5$.  Thus in (ii) or (iii), the value of $\abs{\grk_\grp}$ is
precisely the minimal one  in each respective case. 


\subsection{Step (II): the recursion}
\label{subsec:step-ii}
From now on, we work exclusively in the local case under the assumptions
($\dagger$) and ($\ddagger$) above. More explicitly,  $F$ is
assumed to be a nonarchimedean nondyadic local field with prime ideal $\grp$ and
residue field $\grk$, and $\calO$ is  a Bass
order  in $A$ with $e(\calO)=0$ and $n(\calO)\geq 3$.  Moreover,  $K/F$ is the unique
ramified quadratic extension such that $O_K$ embeds into $\calO$, and
$\Nr(\calN(\calO))=\Nm_{K/F}(K^\times)$. Let $B$ be an order in $K$
with $\Emb(B, \calO)\neq \emptyset$. 
We drop the subscript $_\grp$ and write $\varphi, \calE$ for
$\varphi_\grp,
\calE_\grp$ etc. 

\begin{lem}\label{lem:case-B=OK}
 If $B=O_K$,  then $\Nr(\calE)=\Nm_{K/F}(K^\times)$. 
\end{lem}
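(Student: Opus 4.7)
The plan is to work throughout in the local setting and prove the non-obvious inclusion $\Nr(\calE)\subseteq\Nm_{K/F}(K^\times)$; the reverse containment is immediate from $(\ref{eq:54})$ together with the standing identity $\Nr(\calN(\calO))=\Nm_{K/F}(K^\times)$ and the evident $\Nr(\varphi(K^\times))=\Nm_{K/F}(K^\times)$. Since $[F^\times:\Nm_{K/F}(K^\times)]=2$, the task is reduced to showing that every right coset of $\calN(\calO)$ in $\calE$ has reduced norm inside $\Nm_{K/F}(K^\times)$.

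The natural second source of elements of $\calE$ is the normalizer of $\varphi(K)$ in $A^\times$. By Skolem--Noether, pick $\tau\in A^\times$ satisfying $\tau\varphi(k)\tau^{-1}=\varphi(\bar k)$ for every $k\in K$, so this normalizer is $\varphi(K^\times)\sqcup\varphi(K^\times)\tau$. Since $\tau^{-1}\varphi(O_K)\tau=\varphi(\overline{O_K})=\varphi(O_K)\subseteq\calO$, we have $\tau\in\calE$; combining with the left $\varphi(K^\times)$-invariance and right $\calN(\calO)$-invariance of $\calE$ yields $\tau\calN(\calO)\subseteq\calE$. Next, $\tau^2$ centralizes $\varphi(K)$ and commutes with $\tau$, so $\tau^2\in Z(A)=F$; as $\tau\notin F$, this forces $\Tr(\tau)=0$ and hence $\Nr(\tau)=-\tau^2\in F^\times$. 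Writing $A\cong(K/F,\tau^2)$ as a cyclic algebra, $A$ is split precisely when $\tau^2\in\Nm_{K/F}(K^\times)$. Together with the standard nondyadic Hilbert symbol computation $(-1,d)_F=1$ if and only if $-1\in\grk^{\times 2}$, valid for $K=F(\sqrt d)$ with $\nu(d)=1$, this shows $-\tau^2\in\Nm_{K/F}(K^\times)$ in each of the two subcases of hypothesis~($\ddagger$): $A$ split with $-1\in\grk^{\times 2}$, or $A$ ramified with $-1\notin\grk^{\times 2}$. Thus $\Nr(\tau)\in\Nm_{K/F}(K^\times)$, and both cosets $\calN(\calO)$ and $\tau\calN(\calO)$ of $\calE$ have reduced norm contained in $\Nm_{K/F}(K^\times)$.

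The main obstacle is the structural identity $\calE=\calN(\calO)\cup\tau\calN(\calO)$, which translates to the geometric statement that the only orders of the type of $\calO$ containing $\varphi(O_K)$ are $\calO$ and $\tau\calO\tau^{-1}$. To establish this, take $g\in\calE$ and put $y:=g^{-1}\varphi(x_{\alpha\beta})g\in\calO$; expanding $y=a+b x_{\alpha\beta}+c\pi^r x_1+d\pi^s x_3$ via $(\ref{eq:18})$, the trace and norm formulas $(\ref{eq:29})$ and $(\ref{eq:26})$, together with the argument already used in the proof of Lemma~$\ref{lem:unique-ram-ext}$, force $a\in\grp$ and $b\in O_F^\times$. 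One then seeks to construct an explicit $h\in\calN(\calO)$ with $h^{-1}\varphi(x_{\alpha\beta})h\in\{y,\alpha-y\}$, using the generators of $\calN(\calO)$ listed in $(\ref{eq:36})$--$(\ref{eq:39})$, namely $\calM(\calO)^\times$, the element $\sigma_0$ of $(\ref{eq:41})$, and $x_{\alpha\beta}$. I expect this construction to run by induction along the chain $(\ref{eq:11})$ of Bass overorders $\calM^i(\calO)$, using the recursion $(\ref{eq:40})$ with $e=0$ to reduce step by step to the base case $n(\calO)=2$, where $(\ref{eq:48})$ and $(\ref{eq:59})$ make the normalizer action transparent. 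This explicit conjugation is the technical heart of the proof; once in place, the decomposition $\calE=\calN(\calO)\cup\tau\calN(\calO)$ together with $\Nr(\tau)\in\Nm_{K/F}(K^\times)$ immediately gives $\Nr(\calE)\subseteq\Nm_{K/F}(K^\times)$ and completes the proof.
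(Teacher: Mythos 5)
Your proof contains a genuine, self-acknowledged gap at the very point the paper handles by citation. The paper's own argument simply invokes the proof of \cite[Theorem~3.10]{Brzezinski-crelle-1990}, which shows that the even normalizer $\calN^0(\calO)$ acts \emph{transitively} by conjugation on $\Emb(O_K, \calO)$; this gives $\calE=\varphi(K^\times)\calN^0(\calO)$ immediately, and then $\Nr(\calE)=\Nm_{K/F}(K^\times)$ follows from (\ref{eq:54}) and the standing hypothesis $\Nr(\calN(\calO))=\Nm_{K/F}(K^\times)$. Your ``structural identity'' $\calE\subseteq\calN(\calO)\cup\tau\calN(\calO)$ is precisely a reformulation of this transitivity (note that $\varphi_0(K^\times)\subseteq\calN(\calO)$ since $x_{\alpha\beta}\in\calN(\calO)$ by (\ref{eq:36}), so the paper's decomposition already collapses to $\calE=\calN(\calO)$, and your $\tau$ necessarily lands inside $\calN(\calO)$). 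You correctly identify this as ``the technical heart of the proof,'' but you only sketch a plan (``I expect this construction to run by induction along the chain \ldots'') without carrying it out. That induction is nontrivial --- it is essentially a full reproof of Brzezinski's transitivity theorem --- and leaving it as a heuristic means the argument is incomplete.

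The parts you do complete are correct and, in fact, illuminating. The verification that $\tau\in\calE$ is fine once you observe that $\varphi(O_K)$ is the \emph{maximal} order of $\varphi(K)$, so $\varphi(K)\cap\tau\calO\tau^{-1}\supseteq\varphi(O_K)$ forces equality; you should state that observation explicitly, since $B=O_K$ is what makes this work. The computation $\Nr(\tau)=-\tau^2$, the identification $A\cong(K/F,\tau^2)$, and the nondyadic Hilbert-symbol argument showing $\Nr(\tau)\in\Nm_{K/F}(K^\times)$ in each of the two subcases of hypothesis~($\ddagger$) are all correct; this is, morally, a reinterpretation of the content of Lemma~\ref{lem:normalizer-red-norm}. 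But none of this can substitute for the missing transitivity proof. To close the gap, either actually carry out the inductive conjugation argument (a serious undertaking), or do what the paper does and cite \cite[Theorem~3.10]{Brzezinski-crelle-1990} directly --- in which case the whole detour through $\tau$ becomes unnecessary.
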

\begin{proof}
  From the proof of \cite[Theorem~3.10,
  p.~180]{Brzezinski-crelle-1990}, the even normalizer group
  $\calN^0(\calO)$ acts \emph{transitively} from the right by conjugation on the set of (optimal)
  embeddings $\Emb(O_K, \calO)$. Hence we have
  \begin{equation}
    \label{eq:53}
    \calE=\varphi(K^\times)\calN^0(\calO). 
  \end{equation}
  Since $\Nr(\calN(\calO))= \Nm_{K/F}(K^\times)$ by our
  assumption, the equality $\Nr(\calE)=\Nm_{K/F}(K^\times)$ follows
  directly from (\ref{eq:53}). 
\end{proof}

Now assume that $i(B)\geq 1$. For simplicity, let $\calM(B)$ be the unique
order in $K$ such that $i(\calM(B))=i(B)-1$. A corner stone of our
proof of Theorem~\ref{thm:main} is the following (slightly adjusted)
lemma of Brzezinski \cite[Lemma~3.18]{Brzezinski-crelle-1990}.
\begin{lem}\label{lem:recursive}
  Let $L/F$ be a semisimple quadratic extension, and $R\subset O_L$
  be an order with  $i(R)\geq 1$. Then every optimal
  embedding $R\to \calO$ extends to an optimal embedding $\calM(R)\to
  \calM^2(\calO)$, and every optimal embedding $\calM(R)\to
  \calM^2(\calO)$ whose image is not in $\calO$ restricts to an optimal
  embedding $R\to \calO$. Moreover, for each optimal embedding $\calM(R)\to
  \calM^2(\calO)$, its image is not in $\calO$ with the exception of
  $L=K$ and  $\calM(R)=O_K$ (i.e.~$i(R)=1$).  
\end{lem}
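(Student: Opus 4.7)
The plan is to adapt Brzezinski's original argument \cite[Lemma~3.18]{Brzezinski-crelle-1990} with the explicit coordinate description from \S\ref{sec:bass-orders}. Since $R$ and $\calM(R)$ share the field of fractions $L$, an embedding $\varphi:R\to A$ extends uniquely to $\bar\varphi:L\to A$, and the lemma reduces to comparing $\bar\varphi^{-1}(\calO)$ with $\bar\varphi^{-1}(\calM^2(\calO))$ inside $L$.

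I would first make $\calM^2(\calO)$ explicit. Writing $\calO=\dangle{1,x_{\alpha\beta},\pi^r x_1,\pi^s x_3}$ as in (\ref{eq:18}) with $r=\fl{n(\calO)/2}$, $s=\fl{(n(\calO)-1)/2}$, and iterating the recursion $n(\calM(\calO))=n(\calO)-1$ twice, one verifies $\calM^2(\calO)=\dangle{1,x_{\alpha\beta},\pi^{r-1}x_1,\pi^{s-1}x_3}$; in particular $\pi\calM^2(\calO)\subseteq\calO$. Fix a generator $\tau\in O_L$ so that $R=O_F[\pi^{i(R)}\tau]$ and $\calM(R)=O_F[\pi^{i(R)-1}\tau]$; set $z:=\bar\varphi(\tau)\in A$ and expand $z=a+bx_{\alpha\beta}+cx_1+dx_3$ with $a,b,c,d\in F$. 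The two conditions $\pi^{i(R)}z\in\calO$ and $\pi^{i(R)-1}z\in\calM^2(\calO)$ translate into
\begin{gather*}
\nu(a),\nu(b)\geq-i(R),\quad \nu(c)\geq r-i(R),\quad \nu(d)\geq s-i(R), \\
\nu(a),\nu(b)\geq 1-i(R),\quad \nu(c)\geq r-i(R),\quad \nu(d)\geq s-i(R),
\end{gather*}
respectively, so the containment $\bar\varphi(\calM(R))\subseteq\calM^2(\calO)$ is equivalent to upgrading the first pair of bounds by one. This upgrade I extract from $z\in O_L$: combining $\Tr(z),\Nr(z)\in O_F$ via (\ref{eq:29}) and (\ref{eq:26}) with $\nu(\alpha)\geq 1$, the unit conditions (\ref{eq:20}) and (\ref{eq:27}), and the nondyadic hypothesis on $F$, a short valuation count shows that $\nu(a)\geq 1-i(R)$ and, via the discriminant formula (\ref{eq:24}), that $\nu(b)\geq 1-i(R)$.

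Optimality is handled by a parallel valuation-matching argument. Any $\lambda+\mu z\in\calM^2(\calO)$ with $\lambda,\mu\in F$ reduces, after absorbing a constant into $\lambda$, to the constraint $\nu(\mu)\geq\max(-\nu(b),r-1-\nu(c),s-1-\nu(d))$. Optimality of $\varphi:R\to\calO$ forces equality in at least one of $\nu(b)=-i(R)$, $\nu(c)=r-i(R)$, $\nu(d)=s-i(R)$, each of which drives the displayed maximum to at least $i(R)-1$, whence $\mu\in\pi^{i(R)-1}O_F$. The restriction statement (starting from an optimal $\psi:\calM(R)\to\calM^2(\calO)$ whose image avoids $\calO$) is completely symmetric and reverses the same bookkeeping.

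Finally, for the exception, suppose $\psi:\calM(R)\to\calM^2(\calO)$ is optimal with $\psi(\calM(R))\subseteq\calO$. Then $\pi^{i(R)-1}z\in\calO$ strengthens $\nu(c)\geq r+1-i(R)$ and $\nu(d)\geq s+1-i(R)$, ruling out the $c$- and $d$-equalities from the optimality analysis; hence necessarily $\nu(b)=1-i(R)$. Substituting into (\ref{eq:24}), the term $4\pi b^2(1+\beta+\varepsilon\beta^2)$ dominates (all other terms having $\nu\geq 4-2i(R)$ thanks to $r,s\geq 1$), yielding $\nu(\Delta(z))=3-2i(R)$. Integrality of $z$ forces $\Delta(z)\in O_F$, hence $i(R)=1$; so $\calM(R)=O_L$, and because $O_L$ embeds into $\calO$, Lemma~\ref{lem:unique-ram-ext} identifies $L=K$. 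The main obstacle is precisely this discriminant computation, which is what pins down how narrow the exception must be; the remaining steps are systematic coordinate-chasing within the explicit description of $\calO$.
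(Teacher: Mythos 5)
The paper does not actually re-prove this lemma: it is quoted directly from Brzezinski \cite[Lemma~3.18]{Brzezinski-crelle-1990}, and the only ``proof'' given in the text is the one-line remark that Brzezinski's exception ``$L\supset F$ ramified'' can be sharpened to ``$L=K$'' because Lemma~\ref{lem:unique-ram-ext} (under the nondyadic and $n(\calO)\geq 3$ hypotheses) forces any ramified $L$ with $\Emb(O_L,\calO)\neq\emptyset$ to equal $K$. Your proposal is therefore a genuinely different route: you give a self-contained coordinate proof from the presentation (\ref{eq:18}), which the paper never does. The key observation that makes your bookkeeping work, namely $\calM^2(\calO)=\dangle{1,x_{\alpha\beta},\pi^{r-1}x_1,\pi^{s-1}x_3}$ (hence $\pi\calM^2(\calO)\subseteq\calO$) together with the integrality bounds $\nu(a),\nu(b)\geq 1-i(R)$ extracted from $\Tr(z),\Nr(z),\Delta(z)\in O_F$, is a nice argument and does check out; what the paper's citation buys instead is brevity and independence from the nondyadic hypothesis (which Brzezinski's Lemma~3.18 does not need, though your proof, like the paper's final statement, does).

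Two small points deserve care in your write-up. First, in the ``extends optimally'' step, the case $\nu(b)=-i(R)$ you list is vacuous, since the trace/discriminant count already forces $\nu(b)\geq 1-i(R)$; only the $\nu(c)=r-i(R)$ or $\nu(d)=s-i(R)$ equalities can be the obstruction. Second and more substantively, in the exception analysis your deduction ``hence necessarily $\nu(b)=1-i(R)$'' uses that $\nu(a)\geq 2-i(R)$ cannot fail; this is clean when $i(R)\geq 2$ (it follows from $\Tr(z)=2a+b\alpha+c\in O_F$ with $F$ nondyadic), but when $i(R)=1$ one only gets $\nu(a)\geq 0$, and $\nu(a)=0$ rather than $\nu(b)=0$ could be what obstructs optimality. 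This does not break the proof, since your discriminant computation is only needed to rule out $i(R)\geq 2$, and when $i(R)=1$ you anyway land in the exceptional case and $L=K$ follows from $z\in\calO$ via Lemma~\ref{lem:unique-ram-ext} — but the intermediate claim should be phrased as holding for $i(R)\geq 2$. Finally, you should also note explicitly that $O_L$ is $O_F$-monogenic (true for every semisimple quadratic $L/F$, including $F\times F$), since you use $R=O_F[\pi^{i(R)}\tau]$ without comment.
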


For the exceptional case, Brzezinski  has ``$L\supset F$
ramified'' instead of ``$L=K$''. However, since we assume that $F$ is
nondyadic and $n(\calO)\geq 3$, if $L/F$ is ramified and $L\neq K$,
then $\Emb(O_L, \calO)=\emptyset$  by Lemma~\ref{lem:unique-ram-ext}.

Now fix $\varphi\in \Emb(B, \calO)$. Applying
Lemma~\ref{lem:recursive} for  $L=K$ and $R=B$, we see that 
\begin{equation}
  \label{eq:55}
\varphi\in \Emb(\calM(B), \calM^2(\calO))\quad\text{and}\quad  \calE(\varphi, B, \calO)\subseteq \calE(\varphi, \calM(B), \calM^2(\calO)).
\end{equation}
Moreover,
\begin{equation}
  \label{eq:56}
  \calE(\varphi, B, \calO)=\calE(\varphi, \calM(B), \calM^2(\calO)) \quad \text{if
  } i(B)>1.
\end{equation}
Starting from a pair  $(B, \calO)$ with $i(B)\geq 1$ and $n(\calO)\geq
3$, we apply (\ref{eq:56}) repeatedly until we arrive
at a pair of orders $(\wt{B}, \wt{\calO})$ for which (\ref{eq:56}) no
longer applies. In other words, the recursion
halts after $k$ steps once we hit one of the following conditions:
\begin{equation}
  \label{eq:57}
i(\wt{B})=i(B)-k=1 \quad\text{or}\quad n(\wt\calO)=n(\calO)-2k<3.
\end{equation}
For simplicity, let us put $\wt\calE:=\calE(\varphi, \wt{B},
\wt\calO)$. By construction, $\calE=\wt\calE$, so 
\begin{equation}
  \label{eq:58}
\Nr(\calE)=\Nm_{K/F}(K^\times) \quad \text{if and only if}\quad
\Nr(\wt\calE)=\Nm_{K/F}(K^\times). 
\end{equation}
Thus  we may replace $(B, \calO)$ by
$(\wt B, \wt \calO)$ and try to character when $\Nr(\wt\calE)=\Nm_{K/F}(K^\times)$ holds true.
Depending on the halting condition and the output of the recursion, the discussion will be separated
into the four cases according to the following table.
\begin{table}[htbp]
  \caption{the recursion}\label{tab:recursion}
\renewcommand{\arraystretch}{1.3}
 \begin{tabular}{*{3}{|>{$}c<{$}}|}
\hline
 \text{Start} & \text{Number of steps}& \text{Finish}                              \\
   \hline
   n(\calO)\leq 2i(B) &k=\fl{(n(\calO)-1)/2} & n(\wt\calO)\in\{1,2\},\ i(\wt B)\geq 1\\ 
   \hline
   n(\calO)=2i(B)+1 &k=i(B)-1  & n(\wt\calO)=3,\ i(\wt B)=1\\ \hline
   n(\calO)=2i(B)+2 &k=i(B)-1& n(\wt\calO)=4,\ i(\wt B)=1\\  \hline
   n(\calO)\geq 2i(B)+3&k=i(B)-1&n(\wt\calO)\geq 5,\ i(\wt B)=1 \\ 
   \hline
\end{tabular}  
\end{table}


\subsection{Step (III): the case by case study}
\label{subsec:step-iii}
Keep the notation and  assumptions of the previous step. 
\begin{lem}\label{lem:neq-first-case}
If $n(\calO)\leq 2i(B)$,  then $  \Nr(\calE)\neq \Nm_{K/F}(K^\times)$. 
\end{lem}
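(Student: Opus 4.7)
The plan is to apply the recursion from Step~(II) to reduce $(B,\calO)$ to a boundary pair $(\wt B,\wt\calO)$ where $\wt\calO$ has reduced-discriminant valuation $n(\wt\calO)\in\{1,2\}$. At such a boundary the normalizer $\calN(\wt\calO)$ becomes large enough to force the reduced norm of $\wt\calE$ to exhaust all of $F^\times$, which strictly contains $\Nm_{K/F}(K^\times)$.

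More precisely, the hypothesis $n(\calO)\leq 2i(B)$ is equivalent to $\fl{(n(\calO)-1)/2}\leq i(B)-1$, which allows the recursive identity (\ref{eq:56}) to be iterated $k=\fl{(n(\calO)-1)/2}$ times without ever violating the hypothesis $i>1$ along the way. By the first row of Table~\ref{tab:recursion}, this produces a pair $(\wt B,\wt\calO)$ with $n(\wt\calO)\in\{1,2\}$ and $i(\wt B)\geq 1$, together with the identification $\calE=\wt\calE:=\calE(\varphi,\wt B,\wt\calO)$. In view of (\ref{eq:58}) it suffices to prove $\Nr(\wt\calE)\neq \Nm_{K/F}(K^\times)$. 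The inclusion (\ref{eq:54}) applied to $(\wt B,\wt\calO)$ yields
\[
\wt\calE\supseteq \varphi(K^\times)\,\calN(\wt\calO), \qquad\text{hence}\qquad \Nr(\wt\calE)\supseteq \Nm_{K/F}(K^\times)\cdot\Nr(\calN(\wt\calO)).
\]

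The central step is therefore the computation $\Nr(\calN(\wt\calO))=F^\times$ in both sub-cases. When $n(\wt\calO)=2$, the order $\wt\calO$ inherits Eichler invariant $0$ and (\ref{eq:48}) gives the assertion at once. When $n(\wt\calO)=1$, the order $\wt\calO$ is hereditary: if $A$ is split, $\wt\calO$ is an Eichler order of level $\grp$ whose normalizer contains an Atkin--Lehner-type element of odd reduced-norm valuation together with $\wt\calO^\times$ (whose reduced norm exhausts $O_F^\times$); if $A$ is ramified, $\wt\calO$ is the unique maximal order of $A$, so $\calN(\wt\calO)=A^\times$ and the local Hasse--Schilling--Maass theorem (\ref{eq:15}) forces $\Nr(\calN(\wt\calO))=F^\times$. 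In either case $\Nr(\calN(\wt\calO))=F^\times$.

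Finally, since $[F^\times:\Nm_{K/F}(K^\times)]=2$ by local class field theory, the chain of inclusions above forces $\Nr(\wt\calE)=F^\times\supsetneq \Nm_{K/F}(K^\times)$, as required. I do not foresee any genuine obstacle: the argument is simply an unwinding of the recursion from Step~(II), combined with the elementary fact established in \S\ref{sec:bass-orders} that orders with $n\leq 2$ (whatever their Eichler invariant) always have normalizer of reduced norm equal to $F^\times$.
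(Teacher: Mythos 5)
Your argument is correct and follows essentially the same route as the paper's: iterate the recursion from Step~(II) until $n(\wt\calO)\in\{1,2\}$, show $\Nr(\calN(\wt\calO))=F^\times$, and invoke (\ref{eq:54}) to conclude $\Nr(\calE)=F^\times\neq\Nm_{K/F}(K^\times)$. The only substantive deviation is cosmetic: when $n(\wt\calO)=2$ you apply (\ref{eq:48}) directly (using that $\wt\calO$ still has Eichler invariant $0$), whereas the paper first replaces $\wt\calO$ by its minimal overorder $\calM(\wt\calO)$ via (\ref{eq:59}) and a result of Brzezinski, and so only ever treats $n(\wt\calO)=1$. Your shortcut is harmless and saves one citation.

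Two small points worth flagging. First, in the ramified subcase the paper observes that $\Emb(\wt B,\wt\calO)=\emptyset$ (a maximal order in a local division quaternion algebra optimally contains only $O_K$, not any non-maximal $\wt B$ with $i(\wt B)\geq 1$), which contradicts the standing hypothesis $\Emb(B,\calO)\neq\emptyset$; hence $A$ is in fact forced to be split under the hypotheses of the lemma. Your computation $\Nr(\calN(\wt\calO))=F^\times$ is valid in that case, but the containment (\ref{eq:54}) presupposes $\varphi\in\Emb(\wt B,\wt\calO)$, which is empty there, so it is cleaner to note explicitly that the ramified subcase is vacuous rather than to carry out a normalizer computation inside it. Second, (\ref{eq:15}) is the \emph{global} Hasse--Schilling--Maass theorem; the local surjectivity of the reduced norm on a quaternion division algebra over a nonarchimedean local field is better cited from, e.g., Vign\'eras~\cite[Thm.~II.1.4]{vigneras}, though this is a purely bibliographic quibble.
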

\begin{proof}
  In this case, we have $n(\wt\calO)\in \{1, 2\}$ and
  $i(\wt B)\geq 1$. If $n(\wt\calO)=2$, then
  $\wt{\calO}=O_F+\grJ(\calM(\wt\calO))$ by (\ref{eq:59}), which
  implies that  $\Emb(\wt{B}, \wt{\calO})=\Emb(\wt{B},
  \calM(\wt\calO))$ by   the discussion at the bottom of
  \cite[p.~181]{Brzezinski-crelle-1990}.  Thus $\varphi\in \Emb(\wt{B}, \calM(\wt\calO))$ and
  $\wt\calE=\calE(\varphi, \wt{B},
  \calM(\wt\calO))$. Replacing $\wt\calO$ by $\calM(\wt\calO)$ if
  necessary, we may assume that
  $n(\wt\calO)=1$ for the remaining proof of this lemma. If $A$ is split, then $\wt\calO$ is an Eichler
  order of level $\grp$, so $\Nr(\calN(\wt\calO))=F^\times$. It 
  follows from (\ref{eq:54}) and (\ref{eq:58}) that \[  \Nr(\calE)=\Nr(\wt\calE)=F^\times\neq \Nm_{K/F}(K^\times).\] 
If $A$ is ramified, then $\wt\calO$ is the unique maximal order in
$A$. Since $i(\wt B)\geq 1$, we have $\Emb(\wt{B},
\wt\calO)=\emptyset$ by  \cite[Theorem~II.3.1]{vigneras}, which in
turn implies that $\Emb(B, \calO)=\emptyset$ by the recursion.  This
contradicts the assumption that $\Emb(B, \calO)\neq \emptyset$. Therefore, $A$ cannot be ramified when
$n(\calO)\leq 2i(B)$. The
lemma is proved. 
\end{proof}


In the remaining cases, we always have $n(\wt\calO)\geq 3$ and $i(\wt
B)=1$. Thanks to (\ref{eq:58}), we may simply assume that $i(B)=1$ and
$n(\calO)\geq 3$ at
the very beginning.  In particular, $\calM(B)=O_K$.


\begin{lem}\label{lem:4.4}
  If $i(B)=1$ and $n(\calO)\geq 5$, then $\Nr(\calE)=\Nm_{K/F}(K^\times)$. 
\end{lem}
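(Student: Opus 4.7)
The plan is to sandwich $\calE := \calE(\varphi, B, \calO)$ between two explicit sets both of whose reduced-norm groups coincide with $\Nm_{K/F}(K^\times)$. Fix $\varphi\in \Emb(B,\calO)$ and set $\wt\calO := \calM^2(\calO)$. Since $n(\calO)\geq 5$, we have $n(\wt\calO)=n(\calO)-2\geq 3$, and $\wt\calO$ inherits the standing hypotheses of this subsection: $e(\wt\calO)=e(\calO)=0$ (because $\wt\calO$ is still non-hereditary); $O_K$ embeds into $\wt\calO$ via $O_K\hookrightarrow \calO\subseteq \wt\calO$, so by Lemma~\ref{lem:unique-ram-ext} $K$ remains the unique ramified quadratic extension with $O_K\hookrightarrow \wt\calO$; and Lemma~\ref{lem:normalizer-red-norm} then yields $\Nr(\calN(\wt\calO))=\Nm_{K/F}(K^\times)$, since its hypotheses (i) and (ii) depend only on $A$ and the residue field $\grk$.

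For the upper bound, I invoke the recursion (\ref{eq:55}): $\varphi$ extends to an element of $\Emb(O_K,\wt\calO)$, and $\calE\subseteq \calE(\varphi, O_K, \wt\calO)$. The proof of Lemma~\ref{lem:case-B=OK}, applied to $\wt\calO$ in place of $\calO$ (legitimate thanks to the inherited hypotheses, in particular the transitivity of $\calN^0(\wt\calO)$ on $\Emb(O_K,\wt\calO)$), gives
\[
\calE(\varphi, O_K, \wt\calO) = \varphi(K^\times)\cdot \calN^0(\wt\calO).
\]
Taking reduced norms,
\[
\Nr(\calE)\subseteq \Nm_{K/F}(K^\times)\cdot \Nr(\calN^0(\wt\calO))\subseteq \Nm_{K/F}(K^\times)\cdot \Nr(\calN(\wt\calO))=\Nm_{K/F}(K^\times).
\]

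For the lower bound, (\ref{eq:54}) gives $\calE\supseteq \varphi(K^\times)\cdot \calN(\calO)$, so the standing assumption $\Nr(\calN(\calO))=\Nm_{K/F}(K^\times)$ yields $\Nr(\calE)\supseteq \Nm_{K/F}(K^\times)$. Combining the two bounds produces the desired equality.

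The only conceptual subtlety—and what distinguishes $n(\calO)\geq 5$ from the borderline cases $n(\calO)\in\{3,4\}$ treated elsewhere—is ensuring that $\wt\calO$ really inherits the full setup: namely that $n(\wt\calO)\geq 3$ (so that Lemma~\ref{lem:normalizer-red-norm} applies to $\wt\calO$) and that the ramified extension controlling $\Nr(\calN(\wt\calO))$ is still $K$ (via Lemma~\ref{lem:unique-ram-ext}). Once this is checked, the argument is essentially formal and uses only the containments between $\calE$, $\calE(\varphi,O_K,\wt\calO)$, and the normalizer groups.
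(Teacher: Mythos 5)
Your proof is correct and takes essentially the same route as the paper's: pass to $\calE(\varphi, O_K, \calM^2(\calO))$ via (\ref{eq:55}) and apply Lemma~\ref{lem:case-B=OK} to $\calM^2(\calO)$. The paper states this very tersely, leaving implicit both the lower bound $\Nr(\calE)\supseteq\Nm_{K/F}(K^\times)$ from (\ref{eq:54}) and the verification that $\calM^2(\calO)$ inherits the standing hypotheses (via Lemma~\ref{lem:unique-ram-ext} and Lemma~\ref{lem:normalizer-red-norm}), both of which you rightly make explicit.
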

\begin{proof}
From  (\ref{eq:55}), we have 
\[\calE=\calE(\varphi, B, \calO)\subseteq \calE(\varphi, O_K,
  \calM^2(\calO)).\]
Now $n(\calM^2(\calO))\geq 3$, so it follows from
Lemma~\ref{lem:case-B=OK} that \[\Nr(\calE(\varphi, O_K,
  \calM^2(\calO)))=\Nm_{K/F}(K^\times).\]  We conclude that
$\Nr(\calE)=\Nm_{K/F}(K^\times)$ in this case. 
\end{proof}

Now we  treat the cases that $i(B)=1$ and $n(\calO)\in \{3, 4\}$. By
the assumption on $K$, there exists an
embedding  $\varphi_0: O_K\to \calO$.    From the Skolem-Noether
theorem, we may write 
\begin{equation}
  \label{eq:61}
  \varphi=z\varphi_0 z^{-1}\quad\text{for some}\quad z\in A^\times . 
\end{equation}
According to Lemma~\ref{lem:recursive}, there is a canonical decomposition
\begin{equation}
  \label{eq:62}
  \Emb(O_K, \calM^2(\calO))=\Emb(O_K, \calO)\bigsqcup \Emb(B,
  \calO). 
\end{equation}
If we define\footnote{Note that $\varphi$ is not an optimal embedding
  of $O_K$ into $\calO$, so this set cannot be denoted as
  $\calE(\varphi, O_K, \calO)$.}
\begin{equation}
  \label{eq:65}
\calC(\varphi, O_K, \calO):=\{g\in A^\times\mid \varphi(K)\cap
  g\calO g^{-1}=\varphi(O_K)\},   
\end{equation}
then $  \calE(\varphi, O_K, \calM^2(\calO))$ decomposes
into 
\begin{equation}
  \label{eq:64}
  \calE(\varphi, O_K, \calM^2(\calO))=  \calE(\varphi, B,
  \calO)\bigsqcup \calC(\varphi, O_K, \calO).  
\end{equation}
Plugging (\ref{eq:61}) into (\ref{eq:65}), we get 
\begin{equation}
  \label{eq:63}
  \begin{split}
    \calC(\varphi, O_K, \calO)=z\cdot\calE(\varphi_0, O_K, \calO). 
  \end{split}
\end{equation}
Since $n(\calM^2(\calO))\in \{1, 2\}$, we have 
\begin{equation}
  \label{eq:66}
F^\times=\Nr(\calN(\calM^2(\calO)))\subseteq  \Nr(\calE(\varphi, O_K,
\calM^2(\calO))) \subseteq F^\times. 
\end{equation}
Hence the inclusions are in fact equalities. On the other hand, from Lemma~\ref{lem:case-B=OK},
\begin{equation}
  \label{eq:68}
\Nr(\calE(\varphi_0, O_K, \calO))=\Nm_{K/F}(K^\times),   
\end{equation}
which has index $2$ in $F^\times$. 
Therefore, 
\begin{equation}
  \label{eq:67}
  \Nr(\calE(\varphi, B, \calO))\neq\Nm_{K/F}(K^\times)\quad
  \text{if}\quad \Nr(z)\in \Nm_{K/F}(K^\times).  
\end{equation}

Write
$\calO=\dangle{1,\ x_{\alpha \beta},\ \pi^r x_1,\ \pi^s x_3}$ as in
(\ref{eq:18}). For simplicity, we identify $K$ with $F(x_{\alpha\beta})$
and take $\varphi_0$ to be the identification map.  

\begin{lem}\label{lem:neq-when-n=2i+2}
  If $i(B)=1$ and  $n(\calO)=4$, then $\Nr(\calE)\neq\Nm_{K/F}(K^\times)$.   
\end{lem}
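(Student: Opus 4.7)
The plan is to show $\Nr(\calE(\varphi, B, \calO)) = F^\times$, which, by \eqref{eq:54} combined with our standing assumption $\Nr(\calN(\calO)) = \Nm_{K/F}(K^\times)$ from $(\ddagger)$, is equivalent to the desired inequality $\Nr(\calE) \neq \Nm_{K/F}(K^\times)$. Taking reduced norms in the decomposition \eqref{eq:64}, and using \eqref{eq:63}, \eqref{eq:66}, and Lemma~\ref{lem:case-B=OK} (applied to $\calO$ itself, which has $n(\calO) = 4 \geq 3$), we obtain
\[
F^\times \;=\; \Nr\bigl(\calE(\varphi, O_K, \calM^2(\calO))\bigr) \;=\; \Nr\bigl(\calE(\varphi, B, \calO)\bigr) \;\cup\; \Nr(z)\cdot\Nm_{K/F}(K^\times).
\]
Because $\Nr(z)\cdot \Nm_{K/F}(K^\times)$ is a single coset, hence a proper subset of $F^\times$, the lemma follows at once provided we can exhibit one optimal embedding $\varphi \in \Emb(B, \calO)$ (with $\varphi = z\varphi_0 z^{-1}$) satisfying $\Nr(z) \in \Nm_{K/F}(K^\times)$. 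Since $\Nr\bigl(\calE(\varphi, B, \calO)\bigr)$ is independent of the choice of $\varphi$, a single witness is enough.

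To construct this $\varphi$, I would work in the explicit basis $\calO = \dangle{1, x_{\alpha\beta}, \pi^2 x_1, \pi x_3}$ from \eqref{eq:18} (with $r=2$, $s=1$), and exhibit $y = a + b x_{\alpha\beta} + c\pi^2 x_1 + d\pi x_3 \in \calO$ satisfying $\Tr(y) = \pi\alpha$, $\Nr(y) = \pi^2 \Nr(x_{\alpha\beta})$, and $y \notin \pi\calO$. Setting $\varphi(\pi x_{\alpha\beta}) := y$ then defines an optimal embedding $\varphi: B \to \calO$: the condition $y/\pi \notin \calO$ rules out the alternative $\varphi(K)\cap \calO = \varphi(O_K)$, and since $i(B)=1$, no order strictly lies between $B$ and $O_K$, so $\varphi(K)\cap \calO = \varphi(B)$. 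A convenient ansatz is $b = \pi$, which via \eqref{eq:29} forces $a = -c\pi^2/2$ and via \eqref{eq:26} reduces the norm equation to a polynomial relation in $d$ (linear in $d$ when $A$ is split, quadratic when $A$ is ramified); a suitable $c \in O_F$ then yields $y \notin \pi\calO$.

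With such $y$ in hand, a natural choice of conjugator is $z = y/\pi + x_{\alpha\beta} - \alpha$ (provided $\varphi(x_{\alpha\beta}) \neq \overline{x_{\alpha\beta}}$, which can always be arranged by replacing $\varphi$ by its Galois conjugate if necessary), as a direct computation using $x_{\alpha\beta}^2 = \alpha x_{\alpha\beta} - \Nr(x_{\alpha\beta}) = (y/\pi)^2$ shows $zx_{\alpha\beta} = (y/\pi)z$; this $z$ is well-defined up to left multiplication by $K^\times$, and its reduced norm is computable directly from the coordinates $(a,b,c,d)$ of $y$ via \eqref{eq:26}. One then verifies $\Nr(z) \in \Nm_{K/F}(K^\times) = \Nr(\calN(\calO))$ using \eqref{eq:34}, \eqref{eq:43}, the tame ramification identity $\Nm_{K/F}(O_K^\times) = O_F^{\times 2}$ (\cite[Corollary~V.7]{Serre_local}), and the explicit form \eqref{eq:41} of $\sigma_0 \in \calN^0(\calO)$. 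The main obstacle is this reduced-norm verification, which requires a case split between $A$ split ($\varepsilon = 0$, $-1 \in \grk^{\times 2}$) and $A$ ramified ($1-4\varepsilon \notin O_F^{\times 2}$, $-1 \notin \grk^{\times 2}$); no cardinality condition on $\grk$ is expected to arise, unlike in the subsequent $n(\calO)=3$ case.
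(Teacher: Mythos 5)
Your reduction — using \eqref{eq:54}, ($\ddagger$), the decomposition \eqref{eq:64}, \eqref{eq:63}, \eqref{eq:66}, and Lemma~\ref{lem:case-B=OK} to conclude that it suffices to exhibit one optimal embedding $\varphi = z\varphi_0 z^{-1} \in \Emb(B,\calO)$ with $\Nr(z) \in \Nm_{K/F}(K^\times)$ — is exactly the paper's argument, stated in the form \eqref{eq:67}. So the framework is correct.

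The problem is that you never actually exhibit such a witness. You describe an ansatz ($b=\pi$, forcing $a = -c\pi^2/2$, a polynomial relation for $d$, then ``a suitable $c$''), construct $z = y/\pi + x_{\alpha\beta} - \alpha$ from a putative $y$, and then say ``one then verifies $\Nr(z) \in \Nm_{K/F}(K^\times)$ \dots The main obstacle is this reduced-norm verification.'' But that verification \emph{is} the lemma: since $\Nr(\calE)$ is independent of the choice of $\varphi$, exactly one of the two alternatives $\Nr(\calE) = F^\times$ or $\Nr(\calE) = \Nm_{K/F}(K^\times)$ holds, and which one holds is decided precisely by whether \emph{any} $z$ with $z\varphi_0 z^{-1} \in \Emb(B,\calO)$ has $\Nr(z) \in \Nm_{K/F}(K^\times)$. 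Nothing in the ansatz $b=\pi$ forces $\Nr(z)$ into $\Nm_{K/F}(K^\times)$ a priori; a generic choice of $c$ and $d$ in the allowed parameter space would sometimes put $\Nr(z)$ in the nontrivial coset, since in fact both cosets must occur once the lemma is known. So ``a suitable $c$'' is doing all the work and must be produced and checked.

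The paper sidesteps the indirection entirely by writing down a specific conjugator immediately: $z = 1 + x_3$. Then $\Nr(z) = 1 - \varepsilon\pi \equiv 1 \pmod{\grp}$, so $\Nr(z) \in O_F^{\times 2} \subseteq \Nm_{K/F}(K^\times)$ by Hensel's lemma \eqref{eq:34}, with no case split between $A$ split and ramified; and an explicit expansion of $zx_{\alpha\beta}\bar z$ in the coordinates of \eqref{eq:18} shows the $\pi^2 x_1$-coefficient is $\equiv 2\pi^{-1} \pmod{O_F}$, hence not integral (here nondyadicity of $F$ is used), so $zKz^{-1} \cap \calO = zBz^{-1}$ by Lemma~\ref{lem:recursive}. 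This is a two-line verification. Your route — first pinning down $y$, then reverse-engineering $z$, then trying to compute $\Nr(z)$ from the coordinates of $y$ — is more circuitous, and the one step you flag as the ``main obstacle'' is precisely the one the paper makes trivial by a better choice of $z$. As it stands, your argument is an outline with the decisive computation missing, not a proof.
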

\begin{proof}
In this case, $r=2$ and $s=1$ by (\ref{eq:19}). 
Take $z=1+x_3$. We claim that
\begin{equation}
  \label{eq:71}
  \Nr(z)\in \Nm_{K/F}(K^\times)\quad \text{and}\quad zKz^{-1}\cap \calO
  =z Bz^{-1}.
\end{equation}
From (\ref{eq:16}), we have $\Tr(x_3)=0$ and
$\Nr(x_3)=-\varepsilon\pi$. Therefore,
\[\Nr(z)=(1+x_3)(1-x_3)=1-\varepsilon\pi\equiv 1\pmod{\grp}.\]
This shows that $\Nr(z)\in O_F^{\times 2}\subseteq
\Nm_{K/F}(K^\times)$ by (\ref{eq:34}). Recall that $O_K=\dangle{1,
  x_{\alpha\beta}}$. From Lemma~\ref{lem:recursive}, to show that
$zKz^{-1}\cap \calO 
  =zBz^{-1}$, it is enough to show that $zx_{\alpha\beta}z^{-1}\not\in
  \calO$. Since $\Nr(z)\in O_F^\times$, this is equivalent to show
  that $zx_{\alpha\beta}\bar{z}\not\in \calO$. Now we compute
  \begin{equation}
    \label{eq:2}
\begin{split}
zx_{\alpha\beta}\bar{z}=& ( 1+x_3) ( \alpha x_1+x_2+\beta x_3) (
                          1-x_3) \\
  =& -(1+\alpha \varepsilon )\pi + \big(\alpha + (2+\alpha \varepsilon)
     \pi\big)x_1 \\
  &+ ( 1+2\alpha \varepsilon +\varepsilon \pi ) x_2 +
  \big(\beta(1-\varepsilon\pi) -(\alpha +\pi)\big) x_3.
  \end{split}
  \end{equation}

If we write $
zx_{\alpha\beta}\bar{z}=a+bx_{\alpha\beta}+c\pi^2x_1+d\pi x_3$, then 
\[c\pi^2=(2+\alpha\varepsilon)\pi-(2\alpha+\pi)\varepsilon\alpha.\]
Since $\alpha\in \grp$, we find that $c\pi^2\equiv
2\pi\pmod{\pi^2}$, and hence $c\not\in O_F$ because $F$ is
nondyadic. This finishes the verification of our claim. 
 Now the lemma
follows from combining (\ref{eq:71}) with (\ref{eq:67}). 
\end{proof}

\begin{lem}\label{lem:4.6}
  Suppose that $i(B)=1$ and  $n(\calO)=3$. Then
  $\Nr(\calE)\neq\Nm_{K/F}(K^\times)$ if one of the following
  conditions holds:
  \begin{itemize}
  \item $A$ is split and $\abs{\grk}>5$,
  \item $A$ is ramified and $ \abs{\grk}>3$.
   \end{itemize}
\end{lem}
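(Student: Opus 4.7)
The plan is to invoke \eqref{eq:67}: it suffices to exhibit $z\in A^\times$ with $\Nr(z)\in \Nm_{K/F}(K^\times)$ such that $\varphi := z\varphi_0 z^{-1}$ is an optimal embedding of $B$ into $\calO$. Since $B = \dangle{1,\pi x_{\alpha\beta}}$ and $O_K = \dangle{1,x_{\alpha\beta}}$, the optimality amounts to
\[\pi\, z x_{\alpha\beta} z^{-1} \in \calO \quad \text{and} \quad z x_{\alpha\beta} z^{-1} \notin \calO.\]
I would try $z = 1 + \gamma x_1$ for a carefully chosen $\gamma\in O_F$; recall that $n(\calO)=3$ gives $r=s=1$ by \eqref{eq:33}, so $\calO = \dangle{1, x_{\alpha\beta}, \pi x_1, \pi x_3}$.

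Using the relations \eqref{eq:16}, a direct calculation produces $N := \Nr(z) = 1 + \gamma + \gamma^2\varepsilon$ and
\[ z x_{\alpha\beta}\bar z = \alpha N \cdot x_1 + a_2\, x_2 + a_3\, x_3, \]
with $a_2 = 1 - 2\gamma\beta\varepsilon - \gamma^2(\beta+1)\varepsilon$ and $a_3 = \beta + 2\gamma(\beta+1) + \gamma^2(1+\beta-\beta\varepsilon)$. Rewriting in the basis of $\calO$ via $x_2 = x_{\alpha\beta} - \alpha x_1 - \beta x_3$, all integrality conditions for $z x_{\alpha\beta}\bar z \in \calO$ are automatic (using $\alpha\in\grp$) save one, which after a bit of algebra simplifies to the striking identity
\[ a_3 - \beta a_2 \;=\; \gamma(\gamma+2)(1+\beta+\beta^2\varepsilon). \]
Since $1+\beta+\beta^2\varepsilon\in O_F^\times$ by \eqref{eq:20} and \eqref{eq:27}, the condition $z x_{\alpha\beta}z^{-1}\in\calO$ (equivalent to $z x_{\alpha\beta}\bar z\in\calO$ when $N\in O_F^\times$) is $\tilde\gamma\in\{0,-2\}$ in $\grk$; a parallel computation shows $\pi z x_{\alpha\beta}\bar z\in\calO$ unconditionally. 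So the two requirements on $z$ reduce to: \textbf{(i)} $\tilde N := 1 + \tilde\gamma + \tilde\gamma^2\tilde\varepsilon \in \grk^{\times 2}$ (which by Hensel and the tame ramification of $K/F$ forces $N\in O_F^{\times 2}\subseteq \Nm_{K/F}(K^\times)$), and \textbf{(ii)} $\tilde\gamma\notin\{0,-2\}$.

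The key observation is that both excluded residues satisfy (i): $\tilde N(0)=1$, and $\tilde N(-2) = -(1-4\tilde\varepsilon)$ is a square in the split case since $\varepsilon=0$ and $-1\in \grk^{\times 2}$, and in the ramified case because it is the product of the two non-squares $-1$ and $1-4\tilde\varepsilon$ (noting $-1\notin \grk^{\times 2}$ from condition $(\ddagger)$ and $1-4\tilde\varepsilon\notin \grk^{\times 2}$ from the choice of $\varepsilon$). In the split case, $\tilde N = 1+\tilde\gamma$ gives exactly $(\abs{\grk}-1)/2$ residues satisfying (i), so subtracting the two excluded values leaves $\geq 1$ admissible $\tilde\gamma$ whenever $\abs{\grk}\geq 9$, which is forced by $\abs{\grk}>5$ together with $\abs{\grk}\equiv 1 \pmod 4$. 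In the ramified case, the projective conic $C\colon y^2 = \tilde\varepsilon t^2 + t + 1$ is smooth (as $1-4\tilde\varepsilon\neq 0$) and has $\abs{\grk}+1$ points over $\grk$ by Chevalley--Warning; since $\tilde N$ is nowhere-vanishing on $\grk$, a count of the (at most two) points at infinity gives $\abs{\{t : \tilde N(t)\in \grk^{\times 2}\}}\geq (\abs{\grk}-1)/2$, so removing $\{0,-2\}$ still leaves $\geq 1$ admissible $\tilde\gamma$ when $\abs{\grk}\geq 7$, which is forced by $\abs{\grk}>3$ together with $\abs{\grk}\equiv 3\pmod 4$.

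The main obstacle is the algebraic bookkeeping in computing $zx_{\alpha\beta}\bar z$ and recognizing the clean factorization $a_3 - \beta a_2 = \gamma(\gamma+2)(1+\beta+\beta^2\varepsilon)$; once this is in hand, the rest is elementary finite-field counting and a standard point-count on a conic.
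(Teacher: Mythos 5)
Your proof is correct, and it takes a genuinely more unified route than the paper's. The paper's proof of this lemma is case-by-case: in the split case it takes $z$ to be the diagonal matrix $\left[\begin{smallmatrix}1&0\\0&t\end{smallmatrix}\right]$ with $t\in O_F^{\times 2}$ and $1-t^2\notin\grp$ and verifies $zx_{\alpha\beta}z^{-1}\notin\calO$ directly; in the ramified case it must first re-choose the unit $\varepsilon$ so that $1-2\varepsilon\in O_F^\times$ in addition to $1-4\varepsilon\in O_F^\times\smallsetminus O_F^{\times 2}$, and then takes $z=1-\varepsilon^{-1}x_1$. Both of the paper's choices are, up to a central scalar, instances of your parametrized family $z=1+\gamma x_1$: the split case corresponds to $\gamma=1/t-1$, the ramified case to $\gamma=-\varepsilon^{-1}$. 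Your single computation with the free parameter $\gamma$ culminates in the clean factorization $a_3-\beta a_2=\gamma(\gamma+2)(1+\beta+\beta^2\varepsilon)$, which both identifies the excluded residues $\tilde\gamma\in\{0,-2\}$ and shows that the integrality of $\pi z x_{\alpha\beta}\bar z$ is automatic; this avoids the paper's ``without loss of generality $\alpha=0$'' step and the re-choice of $\varepsilon$, and it keeps $\alpha$ general throughout. The finite-field count then yields precisely the thresholds in the statement: since $-1\in\grk^{\times 2}$ forces $\abs{\grk}\equiv 1\pmod 4$ in the split case and $-1\notin\grk^{\times 2}$ forces $\abs{\grk}\equiv 3\pmod 4$ in the ramified case, the hypotheses $\abs{\grk}>5$ and $\abs{\grk}>3$ become $\abs{\grk}\geq 9$ and $\abs{\grk}\geq 7$, so $(\abs{\grk}-5)/2\geq 1$ in both cases. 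What the paper's approach buys is a short direct verification in each case with no counting; what yours buys is a uniform treatment that explains structurally where the numerical thresholds come from. One small imprecision: the $\abs{\grk}+1$ point count on a smooth plane conic over a finite field is the standard fact that such a conic with a rational point is isomorphic to $\mathbb{P}^1$; Chevalley--Warning by itself only gives the existence of a point, not the count. Your conclusion is nonetheless correct.
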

\begin{proof}
 From  (\ref{eq:19}), we have $r=s=1$  in this
  case. Thus $\calO=\dangle{1, x_{\alpha\beta}, \pi x_1, \pi x_3}$,
  where $x_{\alpha\beta}=\alpha x_1+x_2+\beta x_3$. Since $\alpha\in \grp$, without
loss of generality we may assume that $\alpha=0$, so
$x_{\alpha\beta}=x_2+\beta x_3$. 
  
First, suppose that $A\simeq \Mat_2(F)$ and $\abs{\grk}>5$. In this case $\varepsilon=0$,
and $1+\beta\in O_F^\times$,   so
$x_{\alpha\beta}=
\begin{bmatrix}
  0 & 1+\beta\\ \pi & 0
\end{bmatrix}$ by (\ref{eq:17}). Take $z=
\begin{bmatrix}
  1 & 0 \\ 0 & t
\end{bmatrix}$ for some $t\in O_F^{\times 2}$ with
$1-t^2\not\in\grp$. Such a $t$ exists because the number of $a\in \grk^{\times 2}$ such that $a\neq \pm 1$ is
$(\abs{\grk}-5)/2>0$. Here we have applied the assumption $-1\in \grk^{\times 2}$ in the split
case. 
We compute
\begin{equation}\label{eq:10}
    z x_{\alpha\beta} z^{-1}=\begin{bmatrix}
  0 & t^{-1}(1+\beta) \\ t\pi & 0
\end{bmatrix}=tx_{\alpha\beta}+\frac{(1-t^2)(1+\beta)}{t\pi}(\pi
x_3)\not\in \calO.
\end{equation}
The lemma in this case follows from combining (\ref{eq:10}) with
(\ref{eq:67}). 

Next suppose that $A$ is ramified and $\abs{\grk}>3$. 
  There exists $a\in \grk^\times$ such that
  $1-4a\in \grk^\times\smallsetminus \grk^{\times2}$ and
  $1-2a\neq 0$. Indeed, the number of choices for such an $a$ is at least $(\abs{\grk}-3)/2>0$.  Pick $\varepsilon\in O_F^\times$ to be any element
  such that $\varepsilon$ modulo $\grp$ is equal to $a$. Then we have
  $1-4\varepsilon\in O_F^\times\smallsetminus O_F^{\times 2}$, and
  $1-2\varepsilon\in O_F^\times$. Take this particular $\varepsilon$
  in (\ref{eq:16}) for the $F$-basis $\{1, x_1, x_2, x_3\}$ of
  $A$. Lastly, put $z=1-\varepsilon ^{-1}x_{1}$. We claim that
  \begin{equation}\label{eq:9}
    \Nr(z)=1\quad\text{and}\quad  zKz^{-1}\cap
\calO=zBz^{-1}.
  \end{equation}
The first equality  follows from a direct calculation. To prove $zKz^{-1}\cap
\calO=zBz^{-1}$, it is enough to show that $z x_{\alpha\beta}z^{-1}\not\in
\calO$. We calculate
\[\begin{split}
z x_{\alpha\beta}z^{-1}=&( 1-\varepsilon^{-1}x_1) (x_2+\beta x_3) ( 1-\varepsilon^{-1}+\varepsilon^{-1}x_1) \\ =&
(1+2\beta -\varepsilon^{-1}-\varepsilon^{-1}\beta ) x_2+( \beta -2\varepsilon^{-1}-3\varepsilon^{-1}\beta +\varepsilon^{-2}+\varepsilon^{-2}\beta ) x_3 \\=&
(1+2\beta -\varepsilon^{-1}(1+\beta)) x_{\alpha
  \beta}+
\pi^{-1}\varepsilon^{-2}(1-2\varepsilon)(1+\beta+\varepsilon\beta^2)(\pi
x_3). 
\end{split}\]
By  the above choice of $\varepsilon$ and (\ref{eq:20}), the coefficient 
$\pi^{-1}\varepsilon^{-2}(1-2\varepsilon)(1+\beta+\varepsilon\beta^2)\not\in
O_F$. Our claim is verified. Now the lemma in this  case 
follows from combining (\ref{eq:9}) with (\ref{eq:67}).
\end{proof}

 Keep the assumption  that $i(B)=1$ and $n(\calO)=3$. Let $m(B, \calO,
 \calO^\times)$ be the number of $\calO^\times$-conjugacy classes of
 optimal embeddings of $B$ into $\calO$  as in (\ref{eq:14}). 
Using the assumption that  $F$ is nondyadic,  we apply \cite[(3.12), (3.13) and
(3.15)]{Brzezinski-crelle-1990} to obtain
\begin{equation}\label{eq:76}
  m(B, \calO, \calO^\times)=
  \begin{cases}
   \left((\abs{\grk}^2-\abs{\grk})\cdot 1-2\abs{\grk}\right)/\abs{\grk}= \abs{\grk}-3 &\text{if $A$ is split},\\
   \left((\abs{\grk}^2+\abs{\grk})\cdot 1-2\abs{\grk}\right)/\abs{\grk}=\abs{\grk}-1 &\text{if $A$ is ramified}. 
  \end{cases}
\end{equation}
Assume further that  one of the following
conditions holds:
\begin{itemize}
  \item  $A$ is split and $\abs{\grk}=5$,
    \item $A$ is ramified and $ \abs{\grk}=3$.
\end{itemize}
Then up to conjugation by $\calO^\times$, there are
exactly two optimal
embeddings of $B$ into $\calO$, say $\varphi_1$ and $\varphi_2$.
Write $\varphi_i=z_i\varphi_0z_i^{-1}$ for $i=1, 2$.  We will show
that $\Nr(z_i)\not\in \Nm_{K/F}(K^\times)$ for both $i$. Since $\Nm_{K/F}(K^\times)$ is a subgroup
of index $2$ in $F^\times$, the reduced norm of  $w:=z_1z_2^{-1}$ lies
in $\Nm_{K/F}(K^\times)$, and $\varphi_2=w^{-1}\varphi_1 w$. Now 
\begin{equation}
  \label{eq:12}
\calE(\varphi_1, B, \calO)=\varphi_1(K^\times)\calO^\times\bigsqcup
\varphi_1(K^\times)w\calO^\times. 
\end{equation}
It follows from (\ref{eq:43}) that
\begin{equation}
  \label{eq:31}
  \Nr(\calE)=\Nr(\calE(\varphi_1, B, \calO))=\Nm_{K/F}(K^\times). 
\end{equation}

\begin{lem}\label{lem:4.7}
  Suppose that $i(B)=1$,  $n(\calO)=3$, $A$ is split, and
  $\abs{\grk}=5$. Then $\Nr(\calE)=\Nm_{K/F}(K^\times)$. 
\end{lem}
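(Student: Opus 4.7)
Following the strategy set out in the paragraph immediately before the lemma, I plan to use that $m(B,\calO,\calO^\times)=|\grk|-3=2$ by \eqref{eq:76}, pick representatives $\varphi_1,\varphi_2\in\Emb(B,\calO)$ of the two $\calO^\times$-orbits, and write $\varphi_i=z_i\varphi_0 z_i^{-1}$ with $z_i\in A^\times$. The target is to show $\Nr(z_i)\notin\Nm_{K/F}(K^\times)$ for $i=1,2$; once this is established, $\Nr(z_1 z_2^{-1})\in\Nm_{K/F}(K^\times)$ (since the quotient has order two), and then \eqref{eq:12} together with \eqref{eq:43} yields $\Nr(\calE)=\Nm_{K/F}(K^\times)$. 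In fact I aim to prove the stronger claim that $\Nr(z)\notin\Nm_{K/F}(K^\times)$ for \emph{every} optimal embedding $\varphi=z\varphi_0 z^{-1}\in\Emb(B,\calO)$.

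After reducing to $\alpha=0$ as in Lemma~\ref{lem:4.6} and realizing $A=\Mat_2(F)$ through \eqref{eq:17} (with $\varepsilon=0$), the order becomes $\calO=\dangle{1,x_{\alpha\beta},\pi x_1,\pi x_3}$ with $x_{\alpha\beta}=\left[\begin{smallmatrix} 0 & \gamma \\ \pi & 0 \end{smallmatrix}\right]$, where $\gamma:=1+\beta\in O_F^\times$; I take $\varphi_0$ to be the natural inclusion $K=F(x_{\alpha\beta})\hookrightarrow A$. For $\varphi\in\Emb(B,\calO)$ I set $y:=\varphi(x_{\alpha\beta})=\left[\begin{smallmatrix} p & q \\ r & -p \end{smallmatrix}\right]$, so $\Tr(y)=0$ and $p^2+qr=\pi\gamma$. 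A short case analysis on the valuations $\nu(p),\nu(q),\nu(r)$, using the explicit matrix description of $\calO$, will translate the conditions $\pi y\in\calO$ and $y\notin\calO$ into: $p=\pi p_0$ with $p_0\in O_F$, $q\in O_F^\times$, $r=\pi r_0$ with $r_0=q^{-1}(\gamma-\pi p_0^2)\in O_F^\times$, and the crucial constraint $\bar{q}\notin\{\pm\bar\gamma\}$ in $\grk^\times$.

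A direct computation then shows that $z:=\left[\begin{smallmatrix} q & 0 \\ -p & \gamma \end{smallmatrix}\right]$ satisfies $zx_{\alpha\beta}=yz$, so $\varphi=z\varphi_0 z^{-1}$ and $\Nr(z)=q\gamma\in O_F^\times$. Since $K/F$ is tamely ramified, $\Nm_{K/F}(K^\times)\cap O_F^\times=O_F^{\times 2}$, so $\Nr(z)\in\Nm_{K/F}(K^\times)$ amounts to $\bar{q}\bar\gamma\in\grk^{\times 2}$. The decisive numerical feature of $|\grk|=5$ is that $\grk^{\times 2}=\{\pm 1\}$ and, moreover, $c^2\in\{\pm 1\}$ for every $c\in\grk^\times$; consequently $\bar\gamma^{-1}\in\{\pm\bar\gamma\}$, and the \emph{bad} set $\{\bar{q}:\bar{q}\bar\gamma\in\{\pm 1\}\}=\{\pm\bar\gamma^{-1}\}$ coincides with the \emph{forbidden} set $\{\pm\bar\gamma\}$ excluded in the previous paragraph. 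Hence $\Nr(z)\notin\Nm_{K/F}(K^\times)$, finishing the argument.

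The main obstacle will be the matrix-level case analysis behind the parameterization of optimal embeddings in the second paragraph, which is elementary but must carefully track the defining conditions of $\calO$. The key numerical observation is the coincidence $\grk^{\times 2}=\{\pm 1\}$ in $\F_5$, which forces the \emph{bad} and \emph{forbidden} sets to agree; this is precisely why the conclusion is sharp at $|\grk|=5$ and fails for larger residue fields (consistent with Lemma~\ref{lem:4.6}).
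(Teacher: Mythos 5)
Your proof is correct, and it takes a genuinely different route from the paper. The paper's own argument exhibits the two explicit conjugators $z_1=\left[\begin{smallmatrix}1&0\\0&2\end{smallmatrix}\right]$ and $z_2=\left[\begin{smallmatrix}1&0\\0&-2\end{smallmatrix}\right]$, verifies via \eqref{eq:10} that they produce optimal embeddings, computes $\Nr(z_i)=\pm 2\notin\Nm_{K/F}(K^\times)$, and then must additionally verify (by a short matrix calculation forcing $c=-2a/\pi\notin O_F$) that $z_1\varphi_0z_1^{-1}$ and $z_2\varphi_0z_2^{-1}$ lie in distinct $\calO^\times$-orbits, so that together with the orbit count $m(B,\calO,\calO^\times)=2$ they exhaust $\Emb(B,\calO)/\calO^\times$. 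You instead parameterize an arbitrary optimal embedding via $y=\varphi(x_{\alpha\beta})=\left[\begin{smallmatrix}p&q\\r&-p\end{smallmatrix}\right]$, derive from $\pi y\in\calO$, $y\notin\calO$ and $p^2+qr=\pi\gamma$ the normal form $p=\pi p_0$, $q\in O_F^\times$, $r=\pi r_0$ with $r_0\in O_F^\times$ and $\bar q\notin\{\pm\bar\gamma\}$, produce the explicit conjugator $z=\left[\begin{smallmatrix}q&0\\-p&\gamma\end{smallmatrix}\right]$ with $\Nr(z)=q\gamma$, and use the special structure of $\F_5^\times$ (namely $\grk^{\times2}=\{\pm1\}$ and $\bar\gamma^{-1}\in\{\pm\bar\gamma\}$) to conclude $\Nr(z)\notin\Nm_{K/F}(K^\times)$ \emph{for every} optimal embedding. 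I have checked the matrix identity $zx_{\alpha\beta}=yz$ and the valuation analysis; they are correct. The gain is that the non-conjugacy check becomes unnecessary: once you know $\Nr(z)\notin\Nm_{K/F}(K^\times)$ for \emph{all} conjugators $z$ giving optimal embeddings, the bijection $\varphi_1(K^\times)\bsh\calE\simeq\Emb(B,\calO)$ of \eqref{eq:70} forces $\Nr(g)\in\Nm_{K/F}(K^\times)$ for every $g\in\calE$ (write $g^{-1}\varphi_1 g=z\varphi_0z^{-1}$ with $z=g^{-1}z_1$, so $\Nr(g)=\Nr(z_1)\Nr(z)^{-1}$ is a ratio of two nontrivial classes), so in fact your argument does not even need the orbit count \eqref{eq:76} or the decomposition \eqref{eq:12}. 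One small stylistic point: you invoke \eqref{eq:12} and \eqref{eq:43}, but \eqref{eq:43} suffices; the decomposition \eqref{eq:12} is a convenient crutch, not a necessity.
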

\begin{proof}
  Similarly as in the proof of Lemma~\ref{lem:4.6}, we take
  $x_{\alpha\beta}=\begin{bmatrix}
  0 & 1+\beta\\ \pi & 0
\end{bmatrix}$. In particular, $\alpha$ is taken to be $0$.  Put $z_1=
\begin{bmatrix}
  1 & 0 \\ 0 & 2
\end{bmatrix}
$ and $z_2=
\begin{bmatrix}
  1 & 0 \\ 0 & -2
\end{bmatrix}
$. From (\ref{eq:10}), both $z_i\varphi_0 z_i^{-1}$ are optimal
embeddings of $B$  in $\calO$. Thus to finish the proof, it is enough
to show that $z_1\varphi_0 z_1^{-1}$ and $z_2\varphi_0 z_2^{-1}$ are
not $\calO^\times$-conjugate. Suppose otherwise so that  there
exists $u\in \calO^\times$ satisfying
\begin{equation}
  \label{eq:49}
  z_2\varphi_0
z_2^{-1}=u^{-1}z_1\varphi_0 z_1^{-1}u. 
\end{equation}
Write $u=a+bx_{\alpha\beta}+c\pi x_1+d\pi x_3$ with $a, b, c, d\in O_F$. By 
(\ref{eq:42}), necessarily $a\in O_F^\times$. 
But (\ref{eq:49}) holds if and only if $z_1^{-1}uz_2=\gamma+\delta
x_{\alpha\beta}$ for some $\gamma, \delta\in F$. We compute 
\[z_1^{-1}uz_2=\begin{bmatrix}
  a+c\pi & -2b(1+\beta)-2d\pi\\ b\pi/2 & -a
\end{bmatrix}=
\begin{bmatrix}
  \gamma & \delta(1+\beta)\\ \delta\pi& \gamma
\end{bmatrix}. 
\]
Already, this implies that $c=(-2a)/\pi\not\in O_F$, contradiction to
the assumption that $u\in \calO$. Therefore, $z_1\varphi_0z_1^{-1}$
and $z_2\varphi_0z_2^{-1}$ indeed represent distinct members of
$\Emb(B, \calO)/\calO^\times$. 
 Since
$\Nm_{K/F}(K^\times)\cap O_F^\times=O_F^{\times 2}$ and $\grk=\F_5$, we find $\pm
2\not\in \Nm_{K/F}(K^\times)$. The lemma is proved. 
\end{proof}

\begin{lem}\label{lem:4.8}
  Suppose that $i(B)=1$,  $n(\calO)=3$,  $A$ is ramified, and
  $\abs{\grk}=3$. Then $\Nr(\calE)=\Nm_{K/F}(K^\times)$. 
\end{lem}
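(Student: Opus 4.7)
The plan is to mirror Lemma~\ref{lem:4.7}. The counting formula (\ref{eq:76}) yields $m(B, \calO, \calO^\times) = \abs{\grk} - 1 = 2$ under the present hypotheses, so there are exactly two $\calO^\times$-orbits of optimal embeddings of $B$ into $\calO$. I will exhibit explicit representatives $\varphi_i = z_i \varphi_0 z_i^{-1}$ ($i = 1, 2$) such that both $\Nr(z_i) \notin \Nm_{K/F}(K^\times)$ and $\varphi_1, \varphi_2$ are not $\calO^\times$-conjugate. The conclusion $\Nr(\calE) = \Nm_{K/F}(K^\times)$ will then follow verbatim from the argument of (\ref{eq:12})--(\ref{eq:31}).

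Following the reduction in Lemma~\ref{lem:4.6}, I take $\alpha = 0$ so that $x_{\alpha\beta} = x_2 + \beta x_3$. The requirements $1 - 4\varepsilon \in O_F^\times \setminus O_F^{\times 2}$ and $\abs{\grk} = 3$ together force $\varepsilon \equiv -1 \pmod{\grp}$, so $\varepsilon$ is a non-square unit. I set $z_1 := x_1$ and $z_2 := 1 - x_1 = \bar{x_1}$; both have reduced norm $\varepsilon$, which lies outside $\Nm_{K/F}(K^\times)$ since $K/F$ is tamely ramified and so $\Nm_{K/F}(K^\times) \cap O_F^\times = O_F^{\times 2}$. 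That each $\varphi_i$ is an optimal embedding of $B$ into $\calO$ follows from Lemma~\ref{lem:recursive}: $\calM^2(\calO)$ is the unique maximal order in the ramified algebra $A$ and so automatically contains $\varphi_i(O_K)$, while a short expansion of $z_i x_{\alpha\beta} z_i^{-1}$ via (\ref{eq:16}) produces an $x_3$-component proportional to the unit $u' := 1 + \beta + \varepsilon\beta^2$ from (\ref{eq:20}), confirming $\varphi_i(x_{\alpha\beta}) \notin \calO$.

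The main obstacle is the non-conjugacy. If some $u \in \calO^\times$ satisfied $u^{-1}\varphi_1 u = \varphi_2$, then necessarily $u = z_1 v z_2^{-1} = x_1 v x_1 / \varepsilon$ for some $v = \alpha_0 + \alpha_1 x_{\alpha\beta} \in K^\times$ (using $z_2^{-1} = x_1 / \varepsilon$). Expanding this product in the basis $\{1, x_{\alpha\beta}, \pi x_1, \pi x_3\}$ via the multiplication rules (\ref{eq:16}), one finds that the coefficient of $\pi x_1$ is $\alpha_0 / (\varepsilon \pi)$ while the constant coefficient is $-\alpha_0$. Integrality of $u$ forces $\alpha_0 \in \pi O_F$, yet (\ref{eq:42}) forces $\alpha_0 \in O_F^\times$, a contradiction. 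This settles the non-conjugacy and hence the lemma.
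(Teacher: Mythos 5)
Your proposal is correct and follows the paper's proof in essentially the same way: deduce $m(B,\calO,\calO^\times)=2$ from (\ref{eq:76}), exhibit two $\calO^\times$-inequivalent optimal embeddings whose conjugators have reduced norm outside $\Nm_{K/F}(K^\times)$, and conclude via (\ref{eq:12})--(\ref{eq:31}). Your conjugators $z_1=x_1$, $z_2=1-x_1=\bar{x_1}$ agree up to a unit scalar with the paper's $z_1=x_1^{-1}=\bar{x_1}/\varepsilon$, $z_2=x_1$, so they represent the same two orbits; the only cosmetic differences are that you certify $\varphi_i\in\Emb(B,\calO)$ by directly expanding $z_i x_{\alpha\beta} z_i^{-1}$ rather than using transitivity of $\calN^0(\calO)$ on $\Emb(O_K,\calO)$, and you run the non-conjugacy check by parametrizing $v\in K^\times$ instead of $u\in\calO^\times$ -- both equally short and valid.
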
  
\begin{proof}
Since $\abs{\grk}=3$, the assumption that $1-4\varepsilon\in O_F^\times\smallsetminus
O_F^{\times2}$ implies that $\varepsilon\equiv -1\pmod{\grp}$. 
  Thus if we put $z_1=x_1^{-1}$ and $z_2=x_1$, then
  $\Nr(z_i)=\varepsilon^{\mp 1}\not\in
  \Nm_{K/F}(K^\times)$. We claim that
  $z_iKz_i^{-1}\cap \calO=z_iBz_i^{-1}$ for both $i=1,2$. If not, then
  $zKz^{-1}\cap\calO=zO_Kz^{-1}$ for some $z\in \{z_1, z_2\}$. Recall that
  $\calN^0(\calO)$ acts transitively by conjugation on the set of embeddings
  $\Emb(O_K, \calO)$ (cf.~(\ref{eq:53})). Thus there exists $v\in \calN^0(\calO)$ such
  that $z \varphi_0 z^{-1}=v \varphi_0 v^{-1}$. It follows that $v^{-1}z\in K^\times$, and
  hence $\Nr(v^{-1}z)\in \Nm_{K/F}(K^\times)$. Since
  $\Nr(\calN^0(\calO))\subseteq \Nm_{K/F}(K^\times)$ by Lemma~\ref{lem:normalizer-red-norm}, we find that
  $\Nr(z)\in \Nm_{K/F}(K^\times)$ as well, contradiction to the choice
  of $z_1$ and $z_2$. 

  Next, we check that $z_1\varphi_0z_1^{-1}$ and
  $z_2\varphi_0z_2^{-1}$ are not $\calO^\times$-conjugate.  Suppose
  otherwise so that there exists $u\in \calO^\times$ with
  $z_1^{-1}uz_2\in K$. Write $u=a+bx_{\alpha\beta}+c\pi x_1+d\pi x_3$
  with $a\in O_F^\times$ and $b, c, d\in O_F$ as before. We compute
  \begin{equation}
    \label{eq:72}
    z_1^{-1}uz_2=x_1ux_1=-\varepsilon(a+c\pi)+(a+c\pi-c\varepsilon\pi)x_1+b\varepsilon
    x_{\alpha\beta}+d\varepsilon\pi x_3.
  \end{equation}
  Thus $z_1^{-1}uz_2\in K$ if and only if $a+c\pi(1-\varepsilon)=0$
  and $d=0$. Since $a\in O_F^\times$ and $1-\varepsilon\in
  O_F^\times$, we get $c\not\in O_F$ again. This contradiction shows
  that $z_1\varphi_0z_1^{-1}$ and
  $z_2\varphi_0z_2^{-1}$ indeed represent  distinct members of
$\Emb(B, \calO)/\calO^\times$. The lemma is proved. 
\end{proof}

\begin{proof}[End of the proof of Theorem~\ref{thm:main}]
  Comparing Table~\ref{tab:recursion} with
  Lemmas~\ref{lem:neq-first-case}--\ref{lem:4.8}, it is clear that we have finished the
  case-by-case study for $i(B)\geq 1$. The
   case $B=O_K$ has already been treated in
  Lemma~\ref{lem:case-B=OK}. The proof of Theorem~\ref{thm:main} is now complete. 
\end{proof}

As a by-product of our proof, we obtain the following criterion for
nonexistence of local optimal embeddings. Let $F, \calO$ and $K$ be as 
in the start of \S\ref{subsec:step-ii}, except that we only keep the
assumption that $\Emb(O_K, \calO)\neq \emptyset$ and drop the assumption
that $\Nr(\calN(\calO))=\Nm_{K/F}(K^\times)$ (see Lemma~\ref{lem:normalizer-red-norm}). 
Write $E_\ur$ for the unique unramified quadratic field
  extension of $F$.

\begin{cor}\label{cor:opt-embed-nonempty}
    Let $L/F$ be a semisimple
  quadratic extension, and $R$ be an order in $L$. Assume that
  $\Hom_F(L, A)\neq \emptyset$, that is, $L\neq
  F\times F$  if $A$ is ramified.
  Then $\Emb(R,
\calO)=\emptyset$ if and only if one of the following holds: 
\begin{enumerate}
\item $n(\calO)< 2i(R)$ and $A$ is ramified,
\item $n(\calO)=2i(R)$, $A$ is ramified, and $L/F$ is ramified, 
\item $n(\calO)=2i(R)+1$, $L=E_\ur$,  and $A=\Mat_2(F)$,
\item $n(\calO)=2i(R)+1$, $L=K$, $A=\Mat_2(F)$ and $\abs{\grk}=3$,
\item $n(\calO)=2i(R)+2$, and either 
  $L=F\times F$ or $L=E_\ur$, 
\item $n(\calO)\geq  2i(R)+3$ and $L\neq K$. 
\end{enumerate}
\end{cor}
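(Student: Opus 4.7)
The plan is to iterate Lemma~\ref{lem:recursive} (following Table~\ref{tab:recursion}) until the pair $(R,\calO)$ lands on a base case, and then analyze that base case directly. For $i(R)\geq 1$, $n(\calO)\geq 3$, and $(L,i(R))\neq(K,1)$, Lemma~\ref{lem:recursive} supplies the equivalence $\Emb(R,\calO)\neq\emptyset \Leftrightarrow \Emb(\calM(R),\calM^2(\calO))\neq\emptyset$; the exceptional terminus $L=K,i(R)=1$ must instead be handled via the decomposition (\ref{eq:62}). After iteration, the terminal pair $(\wt R,\wt\calO)$ falls into one of two kinds: either $i(\wt R)=0$ with $n(\wt\calO)\geq 3$, or $i(\wt R)\geq 1$ with $n(\wt\calO)\in\{1,2,3,4\}$.

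The $i(R)=0$ terminus is settled at once: Lemma~\ref{lem:unique-ram-ext} combined with Proposition~1.12 of \cite{Brzezinski-crelle-1990} gives $\Emb(O_L,\calO)\neq\emptyset\Leftrightarrow L=K$, accounting for case (6) when $i(R)=0$. The exceptional terminus ($n=3,\,L=K,\,i=1$) is handled by formula (\ref{eq:76}), which computes the orbit count and isolates case (4) as the unique vanishing. For the remaining termini, with $n(\wt\calO)\in\{1,2\}$ and $L\neq K$, I would perform explicit coordinate analysis: at $n=1$, Eichler--Hijikata (\cite[Theorem~II.3.1]{vigneras}) classifies embeddings according to the type of $A$ and the Artin symbol $(L/\grp)$; at $n=2$, direct testing against (\ref{eq:18}) with $r=1,\,s=0$ reveals precisely which sub-orders of $O_L$ admit optimal embeddings. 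Matching the outcomes to the rows of Table~\ref{tab:recursion} then delivers all six vanishing cases of the corollary, and the converse follows from positive orbit counts at the complementary base cases, transported back through Lemma~\ref{lem:recursive}.

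The main obstacle is the base case $n(\wt\calO)=2$ with $L\neq K$: the naive shortcut $\Emb(\wt R,\wt\calO)=\Emb(\wt R,\calM(\wt\calO))$ used in the proof of Lemma~\ref{lem:neq-first-case} is valid only when $L=K$, and it fails for $L\in\{F\times F,E_\ur\}$ or for ramified $L$ different from $K$. One must therefore analyze $\wt\calO$ directly via the basis (\ref{eq:18}). For instance, when $A$ is ramified, the conductor-$\grp$ sub-order $R_1\subset O_{E_\ur}$ does admit an optimal embedding into $\wt\calO$, realized by $O_F[\pi x_1]\hookrightarrow\wt\calO$ (since $F(x_1)\cap\wt\calO=R_1$), yet no higher-conductor sub-order does; this is precisely the distinction needed to separate the vanishing cases (1), (2), (5) from their non-vanishing counterparts.
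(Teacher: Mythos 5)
Your overall strategy---iterate Lemma~\ref{lem:recursive} along the lines of Table~\ref{tab:recursion} and then settle the base cases---is precisely the route the paper takes. Your treatment of the $i(\wt R)=0$, $n(\wt\calO)\ge 3$ terminus via Lemma~\ref{lem:unique-ram-ext}, of the exceptional $n=3$, $L=K$ terminus via the orbit-count formula (\ref{eq:76}), and your instinct to analyze the $n(\wt\calO)=2$ base case by direct inspection of the basis (\ref{eq:18}) rather than by invoking Brzezinski's formulas (3.14) and (3.17) (which is what the paper does) are all sound; your worked example $O_F[\pi x_1]\hookrightarrow\wt\calO$ for $A$ ramified, $n(\wt\calO)=2$, is correct.

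However, your classification of the termini has a genuine gap. When $L=K$ and $n(\calO)\ge 2i(R)+3$, the recursion must halt at the exceptional clause, landing at $(i(\wt R), n(\wt\calO))=(1,\,n(\calO)-2i(R)+2)$ with $n(\wt\calO)\ge 5$, which is outside both of your two declared kinds. This is exactly the terminus on which the complement of case (6) (i.e.\ nonemptiness when $L=K$) must be verified; the paper does so by computing $m(\wt R,\wt\calO,\wt\calO^\times)=2(\abs{\grk}^2-1)>0$ from Brzezinski (3.13), (3.15). Your phrase ``positive orbit counts at the complementary base cases'' gestures in the right direction, but the relevant base case is missing from your list, so the verification is not actually in place. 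A second, smaller issue: when $L\ne K$, the recursion can run all the way to $i(\wt R)=0$ with $n(\wt\calO)=n(\calO)-2i(R)\in\{1,2\}$ (this is how the paper reaches the $n=1$ and $n=2$ analyses underlying cases (3) and (5)); your stated first kind restricts to $n(\wt\calO)\ge 3$ and your $n\in\{1,2\}$ analysis presumes $i(\wt R)\ge 1$, so these termini fall through the cracks as written. Both issues are repairable within your framework, but as stated the case inventory is incomplete.
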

See \cite[Theorem~3.10]{Brzezinski-crelle-1990} for the case
$n(\calO)=2$, which holds even if $F$ is dyadic. 
\begin{proof}
Applying Lemma~\ref{lem:recursive} recursively to $(R, \calO)$, we
eventually obtain a new pair $(\wt R, \wt \calO)$ for which $\Emb(R,
\calO)=\Emb(\wt R, \wt\calO)$. From Table~\ref{tab:recursion}, the
discussion is again separated into four cases. 

First, suppose that $n(\calO)\leq 2i(R)$ so that $n(\wt\calO)\in \{1, 2\}$
and $i(\wt R)\geq 1$. We further divide it into two subcases according
to whether $n(\wt\calO)$ is equal to $1$ or $2$. Suppose that $n(\wt\calO)=1$.  If $A$ is split, then $\wt\calO$ is an Eichler
order of level $\grp$, so $\Emb(\wt R, \wt \calO)\neq \emptyset$ by
\cite[Theorem~II.3.2]{vigneras}. If $A$ is ramified, then $\wt\calO$ is
the unique maximal order, so $\Emb(\wt R, \wt
\calO)=\emptyset$ by \cite[Theorem~II.3.1]{vigneras}. Next, suppose
that $n(\wt\calO)=2$. From 
\cite[(3.17)]{Brzezinski-crelle-1990}, $\Emb(\wt R, \wt
\calO)\neq \emptyset$ if and only if one of the following conditions
holds:
\begin{itemize}
\item  $A$ is split,
\item $A$ is ramified, $L=E_\ur$, and $i(\wt R)=1$. 
\end{itemize}
Note that $(i(\wt R), n(\wt\calO))=(1,2)$ if and only if
$n(\calO)=2i(R)$. This shows that when $n(\calO)\leq 2i(R)$, we have
$\Emb(R, \calO)=\emptyset$ if and only if either condition (1) or (2)
holds. 

Now suppose that $n(\calO)=2i(R)+1$ so that $(i(\wt R),
n(\wt\calO))=(1, 3)$. We further divide it into two subcases according
to whether $L=K$ or not. First, suppose that $L\neq K$. Then $\Emb(\wt
R,\wt\calO)=\Emb(O_L, \calM^2(\wt\calO))$ by
Lemma~\ref{lem:recursive}. Since $n(\calM^2(\wt\calO))=1$, we find
that $\Emb(O_L, \calM^2(\wt\calO))=\emptyset$ if and only if
$L=E_\ur$ and $A=\Mat_2(F)$. This gives part (3) of the
corollary. Next, suppose that $L=K$. From (\ref{eq:76}), we
immediately see that $\Emb(\wt
R,\wt\calO)=\emptyset$ if and only if $A=\Mat_2(F)$ and
$\abs{\grk}=3$. This gives part (4) of the
corollary.

Next, suppose that $n(\calO)=2i(R)+2$ so that $(i(\wt R),
n(\wt\calO))=(1, 4)$. If $L=K$, we have seen in the proof of
Lemma~\ref{lem:neq-when-n=2i+2} that $\Emb(\wt R, \wt \calO)\neq
\emptyset$. Suppose  that $L\neq K$ so that $\Emb(\wt
R,\wt\calO)=\Emb(O_L, \calM^2(\wt\calO))$ again.  Since
$n(\calM^2(\wt\calO))=2$, it follows from
\cite[(3.14)]{Brzezinski-crelle-1990} that $\Emb(O_L,
 \calM^2(\wt\calO))\neq \emptyset$ if and only if $L/F$ is ramified. This gives part (5) of the
 corollary.

Lastly, suppose that  $n(\calO)\geq  2i(R)+3$ so that $i(\wt R)=1$ and
$n(\wt\calO)\geq 5$. From Lemma~\ref{lem:recursive}, $\Emb(\wt R,
\wt\calO)\subseteq \Emb(O_L, \calM^2(\wt\calO))$. Thus if $L\neq K$,
then $\Emb(\wt R,
\wt\calO)=\emptyset$ since $O_L$ does not embed into
$\calM^2(\wt\calO)$ by Lemma~\ref{lem:unique-ram-ext}. If $L=K$, then
according to  \cite[(3.13) and (3.15)]{Brzezinski-crelle-1990}, we have 
\[m(\wt R,
\wt\calO, \wt\calO^\times)=\frac{1}{\abs{\grk}}\left(\abs{\grk}^2\cdot
  2\abs{\grk}-2\abs{\grk}\right)=2(\abs{\grk}^2-1)>0. 
\]
This gives part (6) of the corollary and completes the proof. 
\end{proof}

\section{Examples}
\label{sec:an-exam}
\numberwithin{thmcounter}{section}
In this section, we construct a  family of concrete examples where
$B\subset K\subseteq \Sigma_\scrG$ and $\Emb(B, \calO)\neq \emptyset$
for every $\calO\in \scrG$ (i.e.~$B$ is not optimally selective). 

Let $p\in \bbN$ be a prime with  $p\equiv 1\pmod{4}$. Fix an integer
$n\geq 3$,  and put $r=\fl{n/2}$ and $s=\fl{(n-1)/2}$ as in
(\ref{eq:33}). Pick  $t \in \Z\cap\Z_p^{\times2}$, i.e.~$t\in \Z$ and
is a quadratic residue\footnote{As a convention, we exclude the case $t\equiv
  0\pmod{p}$ when discussing quadratic (or quartic) residues or non-residues modulo $p$.} modulo $p$. We define two
orders in $A=\Mat_2(\Q)$:
 \begin{align}
   \label{xeq:1}
  \calO:=& \Z
  \begin{bmatrix}
    1 & 0 \\ 0 & 1
  \end{bmatrix}+\Z  \begin{bmatrix}
    0 & 1 \\ p & 0
  \end{bmatrix}+\Z \begin{bmatrix}
    p^r & 0 \\ 0 & 0
  \end{bmatrix}+\Z  \begin{bmatrix}
    0 & p^s \\ 0 & 0
  \end{bmatrix},\\
     \calO':=& \Z
  \begin{bmatrix}
    1 & 0 \\ 0 & 1
  \end{bmatrix}+\Z  \begin{bmatrix}
    0 & t \\ p & 0
  \end{bmatrix}+\Z \begin{bmatrix}
    p^r & 0 \\ 0 & 0
  \end{bmatrix}+\Z  \begin{bmatrix}
    0 & p^s \\ 0 & 0
  \end{bmatrix}.
 \end{align}
From (\ref{eq:18}),  both
$\calO_p$ are $\calO_p'$ are Bass $\Z_p$-orders in $\Mat_2(\Q_p)$ with 
\begin{equation}
  \label{eq:73}
e_p(\calO)=e_p(\calO')=0,\quad\text{and}\quad
n_p(\calO)=n_p(\calO')=n. 
\end{equation}
Indeed, $\calO_p$ is precisely the order in (\ref{eq:18}) with
$\pi=p$,  $\alpha=\beta=0$. Similarly, we have taken
$\alpha=0$ and $\beta=t-1$ for $\calO_p'$.
A direct calculation shows that both $\calO$ and $\calO'$ have index
$p^n$ in $\Mat_2(\Z)$. Hence
\begin{equation}
  \label{eq:74}
\calO_\ell=\calO_\ell'= \Mat_2(\Z_\ell)\qquad \text{for every prime }
  \ell\neq p.   
\end{equation}
 Pick $u_p\in \Z_p^\times$ such that
$u_p^2=t$ and put $h_p:=
\left[\begin{smallmatrix}
u_p & 0 \\ 0 & 1
\end{smallmatrix}\right]$. 
Then $h_p\calO_ph_p^{-1}=\calO_p'$. Therefore, $\calO$ and $\calO'$ belong
to the same genus.  Let $\scrG$ be the genus  of $\calO$ and $\calO'$,
and $\Sigma_\scrG$ be the spinor genus field of $\scrG$.   For
simplicity, write $K=\Q(\sqrt{p})$.

\begin{lem}
  $\Sigma_\scrG=K=\Q(\sqrt{p})$. In particular, $\abs{\Tp(\scrG)}=2$. 
\end{lem}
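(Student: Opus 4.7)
The plan is to prove the two claims in sequence: first, identify $\Sigma_\scrG$, and then deduce $|\Tp(\scrG)| = 2$ via the Eichler condition.

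First, I would verify the inclusion $K \subseteq \Sigma_\scrG$ using Lemma~\ref{lem:K-in-Sigma}. Condition (i) is immediate: $A = \Mat_2(\Q)$ is split at the real place, and $K = \Q(\sqrt{p})$ is totally real since $p > 0$. For condition (ii), I would compute $\Nr(\calN(\calO_\ell))$ at each finite prime. At $\ell \neq p$, $\calO_\ell = \Mat_2(\Z_\ell)$ is maximal, its normalizer is $\Q_\ell^\times \GL_2(\Z_\ell)$, and the reduced norm is $\{x \in \Q_\ell^\times : v_\ell(x) \text{ even}\}$, which lies in $\Nr(K_\ell^\times)$ regardless of how $\ell$ splits in $K$ (note $\ell$ is unramified in $K$ since $\ell \nmid p$). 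At $\ell = p$, the order $\calO_p$ satisfies $e_p(\calO) = 0$, $n_p(\calO) = n \geq 3$, $A$ is split at $p$, and $-1 \in \F_p^{\times 2}$ (from $p \equiv 1 \pmod 4$); hence Lemma~\ref{lem:normalizer-red-norm} gives $\Nr(\calN(\calO_p)) = \Nm_{M_p/\Q_p}(M_p^\times)$ for the unique ramified quadratic extension $M_p/\Q_p$ with $O_{M_p}\hookrightarrow\calO_p$. Reading off (\ref{xeq:1}) with $\alpha=0$ in the notation of (\ref{eq:18}), the element $x_2 = \left[\begin{smallmatrix} 0 & 1 \\ p & 0 \end{smallmatrix}\right]$ satisfies $x_2^2 = p$, so $M_p = \Q_p(x_2) = \Q_p(\sqrt{p}) = K_p$, and (ii) holds at $p$ as well.

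Next, I would establish the reverse inequality $[\Sigma_\scrG : \Q] \leq 2$ by classifying candidates. By Proposition~\ref{prop:description-Sigma}, parts (1a) and (1b), $\Sigma_\scrG/\Q$ is unramified at the archimedean place (since $A$ is split there) and at every prime $\ell \neq p$ (since $e_\ell(\calO) = 2$). Moreover, $\Sigma_\scrG/\Q$ is an elementary abelian $2$-extension by the discussion following (\ref{eq:8}). Any totally real quadratic subfield of $\Sigma_\scrG$ is therefore of the form $\Q(\sqrt{d})$ with squarefree $d > 0$ whose discriminant is a power of $p$; the only possibility is $d = p$ (using $p \equiv 1 \pmod 4$ to ensure the discriminant is $p$ rather than $4p$). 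Since a compositum of exponent-$2$ extensions is determined by its quadratic subfields, this forces $[\Sigma_\scrG : \Q] \leq 2$. Combined with $K \subseteq \Sigma_\scrG$, we obtain $\Sigma_\scrG = K$.

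Finally, since $A = \Mat_2(\Q)$ is split at infinity and thus satisfies the Eichler condition, Remark~\ref{rem:spinor-genus=type} yields the bijection $\Tp(\scrG) \xrightarrow{\sim} \SG(\scrG)$. Together with $|\SG(\scrG)| = [\Sigma_\scrG : \Q] = 2$ from (\ref{eq:8}), this gives $|\Tp(\scrG)| = 2$. There is no significant obstacle in this argument; the only nontrivial input is the identification of $M_p$ with $\Q_p(\sqrt{p})$, which is transparent from the explicit presentation (\ref{xeq:1}) and Lemma~\ref{lem:unique-ram-ext}.
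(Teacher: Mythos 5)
Your proof is correct and uses the same essential ingredients as the paper's: Proposition~\ref{prop:description-Sigma} to control the ramification of $\Sigma_\scrG$, Lemma~\ref{lem:unique-ram-ext} together with the observation $-1\in\F_p^{\times 2}$ to pin down behavior at $p$, the uniqueness of the quadratic field unramified outside $p$ (and at infinity) to get $\Sigma_\scrG=K$, and Remark~\ref{rem:spinor-genus=type} with \eqref{eq:8} for $\abs{\Tp(\scrG)}=2$. The only cosmetic difference is that for the containment $K\subseteq\Sigma_\scrG$ you go through the local criterion of Lemma~\ref{lem:K-in-Sigma} (checking $\Nr(\calN(\calO_\ell))\subseteq\Nm_{K_\ell/\Q_\ell}(K_\ell^\times)$ prime by prime), while the paper instead reads the containment off directly from Proposition~\ref{prop:description-Sigma}(3); since that proposition is itself derived from the local normalizer computations, these are two views of the same argument rather than genuinely different routes.
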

\begin{proof}
From Lemma~\ref{lem:unique-ram-ext}, $K_p$ is the unique quadratic
extension of $\Q_p$ such that $O_{K_p}$ embeds into $\calO_p$. Since $p\equiv 1\pmod{4}$, we have $-1\in
\Z_p^{\times2}$. Thus $K\subseteq \Sigma_\scrG$ by
Proposition~\ref{prop:description-Sigma}. On the other hand,
$\Sigma_\scrG/\Q$ is the compositum of its quadratic subextensions,
but $K/\Q$ is
the unique quadratic extension unramified outside  $p$. We conclude
that $\Sigma_\scrG=K$. 

From (\ref{eq:8}), we have
$\abs{\SG(\scrG)}=[\Sigma_\scrG:\Q]=2$. Since $A=\Mat_2(\Q)$, which
clearly satisfies the Eichler condition, $\SG(\scrG)$ is canonically
identified with $\Tp(\scrG)$  by
Remark~\ref{rem:spinor-genus=type}. Therefore, $\abs{\Tp(\scrG)}=2$. 
\end{proof}
\begin{lem}
The orders $\calO$ and $\calO'$ are of the same type if
  and only if $t\in \Z_p^{\times 4}$, that is, $t$ is a quartic
  residue modulo $p$.  
\end{lem}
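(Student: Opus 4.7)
The plan is to exploit the bijection $\Tp(\scrG) = \SG(\scrG)$ from Remark~\ref{rem:spinor-genus=type} (which applies since $A = \Mat_2(\Q)$ satisfies the Eichler condition) to reduce the question to whether $[\calO']_\sg = [\calO]_\sg$ inside $\SG(\calO)$. Let $\hat h \in \whA^\times$ denote the idele with $p$-component equal to $h_p$ and all other components equal to $1$; by construction and by (\ref{eq:74}) we have $\hat h \wcO \hat h^{-1} = \wcO'$, so via the isomorphism (\ref{eq:118}) the class $[\calO']_\sg$ corresponds to the class of $\Nr(\hat h) = (1, \ldots, 1, u_p, 1, \ldots) \in \whQ^\times$ in the double coset space $\Q^\times \bsh \whQ^\times / \Nr(\calN(\wcO))$. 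Thus $\calO$ and $\calO'$ are of the same type iff one can find $q \in \Q^\times$ such that $q \in \Nr(\calN(\calO_\ell))$ for every prime $\ell \neq p$ and $q u_p^{-1} \in \Nr(\calN(\calO_p))$.

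Next I would compute the local factors. For $\ell \neq p$ the order $\calO_\ell = \Mat_2(\Z_\ell)$ is maximal, its normalizer in $\GL_2(\Q_\ell)$ equals $\Q_\ell^\times \GL_2(\Z_\ell)$, and so $\Nr(\calN(\calO_\ell)) = (\Q_\ell^\times)^2 \cdot \Z_\ell^\times = \{x \in \Q_\ell^\times : v_\ell(x) \in 2\Z\}$. At $\ell = p$, Lemma~\ref{lem:normalizer-red-norm} applies since $e_p(\calO) = 0$, $n_p(\calO) \geq 3$, $A_p$ is split, and $-1 \in \F_p^{\times 2}$ (using $p \equiv 1 \pmod 4$); it yields $\Nr(\calN(\calO_p)) = \Nm_{K_p/\Q_p}(K_p^\times)$. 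Since $K_p = \Q_p(\sqrt{p})$ is tamely ramified with uniformizer $\sqrt{p}$, and $\Nm(\sqrt{p}) = -p$ while $\Nm(O_{K_p}^\times) = \Z_p^{\times 2}$, the norm group equals $\langle -p \rangle \cdot \Z_p^{\times 2} = p^\Z \cdot \Z_p^{\times 2}$, once more invoking $-1 \in \Z_p^{\times 2}$.

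Combining these computations, the condition becomes the existence of $q \in \Q^\times$ with $v_\ell(q) \in 2\Z$ for every $\ell \neq p$ and $q u_p^{-1} \in p^\Z \cdot \Z_p^{\times 2}$. The first family of conditions forces $q = \pm p^a y^2$ for some $a \in \Z$ and $y \in \Q^\times$; plugging this in and absorbing the sign via $-1 \in \Z_p^{\times 2}$ yet again, the second condition reduces to $u_p \in \Z_p^{\times 2}$. By (\ref{eq:34}) this is equivalent to $u_p$ being a quadratic residue modulo $p$, and since $u_p^2 = t$, to $t$ being a quartic residue modulo $p$, which finishes the proof. The only substantive input is the determination of $\Nr(\calN(\calO_p))$, already supplied by Lemma~\ref{lem:normalizer-red-norm}; the remainder is routine idelic bookkeeping, and the assumption $p \equiv 1 \pmod 4$ plays the single role of collapsing all the sign ambiguities that arise along the way.
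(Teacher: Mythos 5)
Your proof is correct, but it follows a genuinely different route from the paper's. You invoke the adelic description of the type set via Remark~\ref{rem:spinor-genus=type} and the isomorphism~(\ref{eq:118}), reducing the question to whether $\Nr(\hat h)$ lies in $\Q^\times\Nr(\calN(\wcO))$, and then settle the matter by local normalizer computations alone -- Lemma~\ref{lem:normalizer-red-norm} at $p$ and the elementary formula $\Nr(\calN(\Mat_2(\Z_\ell)))=\Z_\ell^\times\Q_\ell^{\times 2}$ elsewhere. The paper instead argues with an explicit global conjugator: it first observes that any $g\in\GL_2(\Q)$ with $g\calO g^{-1}=\calO'$ must normalize the Eichler order $\scrO$ (the common hereditary closure), uses the decomposition of $\calN(\scrO)$ together with the fact that $\left[\begin{smallmatrix}0&1\\p&0\end{smallmatrix}\right]\in\calN(\calO)$ to take $g\in\scrO^\times$, then reads off $\Nr(g)\in\{\pm1\}$ and concludes $u_p\in\Z_p^{\times 2}$ from $\Nr(h_p^{-1}g)\in\Z_p^{\times 2}$; for the converse it explicitly produces $g\in\SL_2(\Z)$ via surjectivity of $\SL_2(\Z)\to\SL_2(\zmod{p^{r+1}})$ and verifies $g\calO g^{-1}=\calO'$ locally. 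Both arguments use $p\equiv 1\pmod 4$ to absorb signs and both rest ultimately on Lemma~\ref{lem:normalizer-red-norm}; the trade-off is that your idelic computation is more uniform and treats the two implications symmetrically (the bijectivity of~(\ref{eq:118}) quietly packages the strong approximation step), whereas the paper's proof is more elementary and, in the ``if'' direction, actually exhibits the conjugating matrix.
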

\begin{proof}
First, the hereditary closures of both $\calO_p$ and $\calO_p'$ coincide with $\begin{bmatrix}
  \Z_p & \Z_p\\ p\Z_p & \Z_p
\end{bmatrix}$.  Put $\scrO=
\begin{bmatrix}
  \Z & \Z \\ p\Z & \Z
\end{bmatrix}$.
If $g\calO g^{-1}=\calO'$ for some $g\in \GL_2(\Q)$, then necessarily $g\scrO
g^{-1}=\scrO$, that is, $g\in \calN(\scrO)$. It is well known that
\[\calN(\scrO)=\Q^\times\scrO^\times\bigsqcup \Q^\times\scrO^\times
  \begin{bmatrix}
    0 & 1 \\ p & 0
  \end{bmatrix}.
\]
On the other hand,  $\left[\begin{smallmatrix}
 0 & 1 \\ p & 0
\end{smallmatrix}\right]\in \calN(\calO)$. Indeed, clearly $\left[\begin{smallmatrix}
 0 & 1 \\ p & 0
\end{smallmatrix}\right]\in \calN(\calO_\ell)$ for each prime
$\ell\neq p$. Moreover, from (\ref{eq:36}) we have $\left[\begin{smallmatrix}
 0 & 1 \\ p & 0
\end{smallmatrix}\right]\in \calN(\calO_p)$ since $x_{\alpha\beta}=x_2=\left[\begin{smallmatrix}
 0 & 1 \\ p & 0
\end{smallmatrix}\right]$ for $\calO_p$. Therefore, if there exists
$g\in \GL_2(\Q)$ such that $\calO'=g\calO g^{-1}$, then it can
be taken inside $\scrO^\times$.

Suppose that there exists $g\in\scrO^\times$ such that $g\calO g^{-1}=\calO'$. Then we have
$h_p^{-1}g\in \calN(\calO_p)$. From
Lemma~\ref{lem:normalizer-red-norm}, \[\Nr(h_p^{-1}g)\in
  \Nr(\calN(\calO_p))\cap \Z_p^\times=\Z_p^{\times 2}.\]
Since $\Nr(g)\in \Nr(\scrO^\times)=\{\pm 1\}$ and $p\equiv 1\pmod{4}$, we get $u_p=\Nr(h_p)\in \Z_p^{\times
  2}$, which implies that $t=u_p^2\in \Z_p^{\times 4}$. 

Next, suppose that $t\in \Z_p^{\times 4}$. Then the equation $x^2=u_p$
has a solution $v_p\in\Z_p^\times$. From \cite[\S6.1]{lang-ell-func}, the
canonical map $\SL_2(\Z)\to \SL_2(\zmod{p^{r+1}})$ is surjective. In particular,
there exists $g\in \SL_2(\Z)$ such that
\begin{equation}\label{eq:75}
g\equiv    \begin{bmatrix}
    v_p & 0 \\ 0 & v_p^{-1}
  \end{bmatrix} \pmod{p^{r+1}}.
\end{equation}
We claim that $g\calO g^{-1}=\calO'$. It is enough to show that
$g\calO_\ell g^{-1}=\calO'_\ell$ for every prime $\ell$ (including
$\ell=p$). If $\ell\neq p$, this is clear from (\ref{eq:73}). At the
prime $p$, observe that $\calO_p\supseteq \grO_p:=\Z_p+p^{r+1}\Mat_2(\Z_p)$. 
The choice of $g$ in (\ref{eq:75}) guarantees that $h_p^{-1}g\in
\grO_p^\times\subseteq \calO_p^\times$, which implies that $g\calO_p g^{-1}=\calO_p'$. This
finishes the verification of our claim and the proof of the lemma. 
\end{proof}
\begin{ex}
  Suppose that $p\equiv 5\pmod{8}$. Then $-1\in
  \Z_p^{\times2}\smallsetminus \Z_p^{\times 4}$. Thus if we put
  $t=-1$, then $\Tp(\scrG)$ is represented by $\calO$ and $\calO'$. 
\end{ex}

\begin{prop}\label{prop:example}
  Suppose that $t\not\in \Z_p^{\times 4}$ so that $\{\calO, \calO'\}$
  is a complete set of representatives for $\Tp(\scrG)$. 
  Let $B$ be an order in
$\Q(\sqrt{p})$. Suppose that  $n<2i_p(B)+3$,  and $p\neq 5$ if
$n=2i_p(B)+1$. 
Then both  $\Emb(B, \calO)$ and $\Emb(B, \calO')$ are nonempty. In other words, $B$ is not optimally selective for the genus $\scrG$. 
\end{prop}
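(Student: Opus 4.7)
The plan is to apply Theorem~\ref{thm:main}(I) to show that $B$ is \emph{not} optimally spinor selective for $\scrG$, and then use the two-type description of $\scrG$ (combined with the Eichler condition) to conclude that $B$ admits an optimal embedding into both types. As a first step I would verify that $\Emb(B_\grp, \calO_\grp) \neq \emptyset$ for every finite prime $\grp$ of $\Q$. For $\ell \neq p$ the completion $\calO_\ell = \Mat_2(\Z_\ell)$ is maximal, so this is standard (e.g.\ \cite[Theorem~II.3.2]{vigneras}). At $p$ I would invoke Corollary~\ref{cor:opt-embed-nonempty} with $L = K_p$ and $R = B_p$ and rule out each of the six failure conditions: conditions (1) and (2) require $A_p$ to be ramified, which fails since $A \simeq \Mat_2(\Q)$; (3) requires $L = E_\ur$, which fails since $K/\Q$ is ramified at $p$; (4) requires $\abs{\grk_p} = 3$, which is incompatible with the standing hypothesis $p \equiv 1 \pmod{4}$; and (5) and (6) both require $L \neq K$.

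Since the preceding lemma establishes $K = \Sigma_\scrG$, I next check that none of the conditions (i)--(iii) in Theorem~\ref{thm:main}(I) hold at any nondyadic prime $\grp$ with $e_\grp(\calO) = 0$ and $(K/\grp) = 0$. The only such prime is $p$ itself, because at every $\ell \neq p$ we have $e_\ell(\calO) = 2$. At $p$, condition (i) $n \geq 2i_p(B) + 3$ is ruled out by the hypothesis $n < 2i_p(B) + 3$; (iii) is ruled out since $A$ is split everywhere; and (ii) requires both $n = 2i_p(B) + 1$ and $\abs{\grk_p} = p = 5$, but the hypothesis explicitly excludes $p = 5$ in that case. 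Thus Theorem~\ref{thm:main}(I) implies that $B$ is not optimally spinor selective for $\scrG$.

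Finally, from the existence of local optimal embeddings at every prime together with \cite[Corollary~30.4.8]{voight-quat-book}, there is at least one member of $\scrG$ admitting an optimal embedding of $B$, so the set $\{\calO \in \scrG \mid \Delta(B, \calO) = 1\}$ is nonempty. Since $B$ is not selective, this set must therefore equal all of $\scrG$. Because $A = \Mat_2(\Q)$ satisfies the Eichler condition, each spinor genus is a single type by Remark~\ref{rem:spinor-genus=type}, so $\Delta(B, \calO') = 1$ is equivalent to $\Emb(B, \calO') \neq \emptyset$ for every $\calO' \in \scrG$. Applying this to the two representatives $\calO$ and $\calO'$ of $\Tp(\scrG)$ yields the desired conclusion. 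The main obstacle is really the mechanical checking against Corollary~\ref{cor:opt-embed-nonempty}; everything else is a direct application of Theorem~\ref{thm:main} to the very special invariants of $\scrG$ computed in the preceding lemmas.
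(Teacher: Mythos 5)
Your proof is correct and takes essentially the same route as the paper: verify local optimal embeddability at every prime (trivial away from $p$, via Corollary~\ref{cor:opt-embed-nonempty} at $p$), then rule out conditions (i)--(iii) of Theorem~\ref{thm:main}(I) at the unique relevant prime $p$, and use the Eichler condition to translate non-selectivity into both representatives admitting an optimal embedding. The paper's proof is just a terse statement of these same steps, leaving the mechanical checks against Corollary~\ref{cor:opt-embed-nonempty} and Theorem~\ref{thm:main}(I) implicit; your write-up simply makes them explicit.
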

\begin{proof}
From (\ref{eq:74}), $\Emb(B_\ell, \calO_\ell)\neq \emptyset $ for
every prime  $\ell\neq p$. According to 
Corollary~\ref{cor:opt-embed-nonempty}, $\Emb(B_p,
\calO_p)$ is nonempty as well. The proposition follows directly from
Theorem~\ref{thm:main}. 
\end{proof}


Lastly, we consider the global number of optimal embeddings up to
conjugation as in (\ref{eq:14}). Let $\grF$ be a number field, and $\grA$ be a quaternion $\grF$-algebra
satisfying the Eichler condition. 
Let $\grO, \grO'\subset \grA$ be two orders in the same genus $\grG$. Suppose
that $e_\grp(\grO)\neq 0$ for every finite prime $\grp$ of $\grF$. Let 
  $\grB$ be an $O_{\grF}$-order in a quadratic field extension $\grK/\grF$ with $\Emb(\grB_\grp, \grO_\grp)\neq \emptyset$ for every
  $\grp$. Suppose that either $\grB$ is not optimally selective for
  $\grG$  or both $\grO$ and $\grO'$ are optimally selected by $\grB$. Then
  \begin{equation}\label{eq:13}
 m(\grB, \grO, \grO^\times)=m(\grB, \grO', \grO'^\times).    
  \end{equation}
 See
\cite[Theorem~31.1.7]{voight-quat-book} for the proof in the case of
Eichler orders and \cite[Proposition~2.15]{xue-yu:spinor-class-no} for
the proof in general.  Naturally, one asks whether the equality
(\ref{eq:13}) still holds true if 
$e_\grp(\grO)$ is allowed to be zero at some finite prime $\grp$. From
\cite[Proposition~2.15]{xue-yu:spinor-class-no},  inequality is
possible\footnote{But so far we do not know any such examples.} only if
$\grK\subseteq \Sigma_\grG$ and $\grB$ is not optimally selective
for $\grG$. Our
family of examples fit this description perfectly, so we ask the
following concrete question.   
\begin{que}
Under the assumption of Proposition~\ref{prop:example}, do we  have
  \begin{equation}
m(B, \calO, \calO^\times)=m(B, \calO', \calO'^\times)?
  \end{equation}
\end{que}

\def\cprime{$'$}

\end{document}